\documentclass{article}
\usepackage{amssymb}
\usepackage{amsmath}
\newcommand{\dar}{{\downarrow}}
\newcommand{\pole}{\perp\!\!\!\!\perp}
\newcommand{\cc}{{\sf c}\! {\sf c}}

\newcommand{\cmb}[1]{{\bf\sf #1}}

\newtheorem{proposition}{Proposition}[section]
\newtheorem{lemma}[proposition]{Lemma}
\newtheorem{corollary}[proposition]{Corollary}
\newtheorem{definition}[proposition]{Definition}
\newtheorem{theorem}[proposition]{Theorem}
\newtheorem{example}[proposition]{Example}
\newtheorem{remark}[proposition]{Remark}
\newcounter{lijst-teller}
\newcounter{boon}
\newcounter{boon2}
\newenvironment{proof}{\noindent {\bf Proof}. \nopagebreak }{\nopagebreak\hfill\rule{2mm}{3mm}}

\input xypic
\xyoption{all}
\UseComputerModernTips
\CompileMatrices
\definemorphism{dat}\dashed\tip\notip
\definemorphism{dub}\Solid\Tip\notip

\newenvironment{rlist}%
   {\begin{list}{\roman{lijst-teller}\/)\hfil}%
              {\labelwidth 2em%
               \leftmargin\labelwidth\advance\leftmargin by\labelsep%
               \usecounter{lijst-teller}}}%
   {\end{list}}

\newenvironment{r'list}%
   {\begin{list}{\roman{lijst-teller}\/)$'$\hfil}%
              {\labelwidth 2em%
               \leftmargin\labelwidth\advance\leftmargin by\labelsep%
               \usecounter{lijst-teller}}}%
   {\end{list}}

\newenvironment{alist}
   {\begin{list}{\alph{boon}\/)\hfil}%
               {\labelwidth 2em%
               \leftmargin\labelwidth\advance\leftmargin by\labelsep%
               \usecounter{boon}}}%
   {\end{list}}

\newenvironment{arlist}
    {\begin{list}{\arabic{boon2}\/)\hfil}%
                 {\labelwidth 2em%
                 \leftmargin\labelwidth\advance\leftmargin by\labelsep%
                 \usecounter{boon2}}}%
    {\end{list}}

\hyphenation{to-po-ses pre-ser-ving Long-ley}
\title{Classical and Relative Realizability}
\author{Jaap van Oosten\footnote{Corresponding author. Department of Mathematics, Utrecht University. E-mail address: {\tt j.vanoosten@uu.nl}} \and Tingxiang Zou\footnote{presently at Universit\'e de Lyon 1, groupe de Logique}}
\date{March 8, 2016}

\begin{document}

\maketitle
\begin{abstract}
\noindent We show that every abstract Krivine structure in the sense of Streicher can be obtained, up to equivalence of the resulting tripos, from a filtered opca $(A,A')$ and a subobject of 1 in the relative realizability topos ${\sf RT}(A',A)$; the topos is always a Booleanization of a closed subtopos of ${\sf RT}(A',A)$. We exhibit a range of non-localic Boolean subtoposes of the Kleene-Vesley topos. 
\end{abstract}

\noindent {\bf Keywords}: realizability toposes, partial combinatory algebras, geometric morphisms, local operators, abstract Krivine structures, non-localic Boolean toposes.
\section*{Introduction}
In an impressive series of papers, Jean-Louis Krivine has been emplying extensions of the untyped $\lambda$-calculus to create ``realizability interpretations'' for classical ZF set theory. He has been working on this project for roughly the last 20 years.

For a long time, this work seemed to have no connections with other interpretations, also called `realizability', in the Kleene-Troelstra-Hyland tradition (for an overview of which, see e.g.\ \cite{OostenJ:reahe}). And disjoint research groups worked either in `Krivine realizability' or with notions of realizability related to the effective topos or similar toposes.

This situation has recently undergone a drastic change: the series of `realizability' meetings at Chamb\'ery has brought researchers from different traditions together, and in particular Thomas Streicher, who published  \cite{StreicherT:kcrcp}, has built an important bridge.

After reformulating Krivine's `abstract machine' as an `abstract Krivine structure' (aks), Streicher proves that from each aks one may construct a so-called {\sl filtered order-pca\/} (a structure for what is called ``relative realizability'' in Birkedal's thesis \cite{BirkedalL:devttc-entcs} and in \cite{BirkedalL:relmrr}), and hence a topos; the special features of the filtered opca constructed from an aks ensure that this will be a Boolean topos.

In a series of papers from 2013--2015 (\cite{FerrerSantosW:repr,FerrerSantosW:ordcar,FerrerSantosW:reaoa}) Walter Ferrer Santos, Jonas Frey, Mauricio Guillermo, Octavio Malherbe and Alexandre Miquel develop theory of ordered pcas whose associated Set-indexed preorders are Boolean triposes. Frey, moreover, investigated variations corresponding to different flavours of Krivine realizability (\cite{FreyJ:reats}).

All this work is, however, essentially syntactic. The focus of the present paper is on a {\em mathematical construction\/} of abstract Krivine structures.

We start with the concept of a {\sl Basic Combinatorial Object\/} from Pieter Hofstra's elegant paper \cite{HofstraP:allrr}. BCOs form a preorder-enriched category with a KZ-monad $\cal D$ (we rehearse the material we need in section \ref{BCOsection}). Every BCO $\Sigma$ induces a Set-indexed preorder $[-,\Sigma ]$.

Among BCOs, filtered opcas are characterized as those BCOs for which the Set-indexed preorder $[-, {\cal D}\Sigma ]$ is a tripos. What then might be termed a `complete filtered opca', a BCO $\Sigma$ such that $[-,\Sigma ]$ is a tripos, is characterized (by our proposition~\ref{appl=fpp} and theorem~\ref{Vappmor}) as a ${\cal D}$-algebra whose algebra structure preserves finite meets. This generalizes the characterization of locales among meet-semilattices. We also give a characterization in terms of one of the definitions of Ferrer Santos et al, sharpening their result (theorem~\ref{preimptripos}). Moreover we descrive `dense' morphisms of filtered opcas, and recover a suitable analogue of Peter Johnstone's simple criterion in \cite{JohnstonePT:geomrt}.

Then, we embark on classical realizability. We prove that for every filtered opca $A,A')$ and downwards closed subset $U\subset A$ such that $U\cap A'=\emptyset$, we have an abstract Krivine structure. Moreover, the tripos arising from this aks (by Streicher's construction) represents a topos which is the Booleanization of a closed subtopos of the standard realizability topos ${\sf RT}(A',A)$: that is, for a subobject $U$ of 1 in ${\sf RT}(A',A)$ we get the sheaf subtopos corresponding to the local operator $((-)\to U)\to U$. And, {\em every tripos resulting from an aks is of this form}.

Finally, we investigate when our Boolean triposes are localic. We compare criteria independently given by Hofstra and Krivine, and find them, reassuringly, to be equivalent.

Our final theorem specializes to the filtered pca ${\cal K}_2$ of functions $\mathbb{N}\to\mathbb{N}$ with filter the set of recursive functions. We exhibit a range of non-localic Boolean subtoposes of the Kleene-Vesley topos (theorem~\ref{nonlocalicexmp}).

\section{BCOs, Filtered OPCAs and Triposes}
\subsection{BCOs}\label{BCOsection}
This section rehearses what we shall need from Hofstra's paper \cite{HofstraP:allrr}.
\begin{definition}\label{BCO}\em A {\em Basic Combinatorial Object\/} (BCO) consists of a poset $(\Sigma ,\leq )$ and a set ${\cal F}_{\Sigma}$ of partial endofunctions on $\Sigma$, which structure satisfies the following requirements:\begin{rlist}
\item Every $f\in {\cal F}_{\Sigma}$ has downwards closed domain, and is order-preserving on its domain.
\item There is a total map $i\in {\cal F}_{\Sigma }$ such that $i(a)\leq a$ for all $a\in\Sigma$.
\item For every pair $f,g\in {\cal F}_{\Sigma}$ there is some $h\in {\cal F}_{\Sigma}$ such that whenever $g(f(a))$ is defined, $h(a)\leq g(f(a))$.\end{rlist}\end{definition}
\begin{definition}\label{BCOmor}\em Let $(\Sigma ,\leq ,{\cal F}_{\Sigma})$ and $(\Theta ,\leq ,{\cal F}_{\Theta})$ be BCOs. A {\em morphism\/} from $(\Sigma ,\leq ,{\cal F}_{\Sigma})$  to $(\Theta ,\leq ,{\cal F}_{\Theta})$ is a total function $\phi :\Sigma\to\Theta$ satisfying the conditions:\begin{rlist}
\item There is an element $u\in {\cal F}_{\Theta}$ such that for every inequality $a\leq a'$ in $\Sigma$ we have $u(\phi (a))\leq \phi (a')$ in $\Theta$ (in particular, $u$ is defined on all elements in the image of $\phi$).
\item For every $f\in {\cal F}_{\Sigma}$ there is a $g\in {\cal F}_{\Theta}$ such that for all $a$ in the domain of $f$, $\phi (a)$ is in the domain of $g$, and $g(\phi (a))\leq \phi (f(a))$.\end{rlist}
Given two morphisms $\phi ,\psi :\Sigma\to\Theta$ we say $\phi\leq\psi$ if there is an element $g\in {\cal F}_{\Theta}$ satisfying $g(\phi (a))\leq\psi (a)$ for all $a\in\Sigma$.
\end{definition}
It is readily verified that with these definitions, we have a preorder-enriched category {\bf BCO}. This category has a terminal object 1 and binary products. Therefore, as in any cartesian 2-category, one can speak of objects which have {\em finite internal products}: a BCO has internal terminal object (or: internal top element) if the {\bf BCO}-morphism $\Sigma \to 1$ has a right adjoint (denoted $\top$); and $\Sigma$ has internal binary products (binary meets) if the diagonal map $\Sigma\to\Sigma\times\Sigma$ has a right adjoint. Such a right adjoint, if it exists, will be denoted $(-)\wedge (-)$.

If a BCO has finite internal meets, we define the set ${\rm TV}(\Sigma )$ of {\em designated truth-values\/} as
$${\rm TV}(\Sigma )\; =\;\{ a\in\Sigma\, |\,\text{for some $f\in {\cal F}_{\Sigma}$, $f(\top )\leq a$}\}$$
Clearly, ${\rm TV}(\Sigma )$ is an upwards closed subset of $\Sigma$, and one can show that for all $a,b\in {\rm TV}(\Sigma )$, also $a\wedge b\in {\rm TV}(\Sigma )$. Therefore we think of ${\rm TV}(\Sigma )$ as a {\em filter}. However, bear in mind that $a\wedge b$ is in general {\em not\/} the meet of $a$ and $b$ in the poset $(\Sigma ,\leq )$.
\begin{definition}\label{opca}\em An {\em order-pca\/} or {\em opca\/} is a poset $(A,\leq )$ with a partial binary function (called {\em application}) $A\times A\to A$, written $a,b\mapsto ab$, which has the following properties:\begin{rlist}
\item Whenever $ab$ is defined and $a'\leq a,b'\leq b$, then $a'b'$ is defined and $a'b'\leq ab$.
\item There are elements $\cmb{k}$ and $\cmb{s}$ in $A$ such that for all $x,y\in A$ we have $(\cmb{k}x)y\leq x$, and for all $x,y,z\in A$, whenever $(xz)(yz)$ is defined, so id $((\cmb{s}x)y)z$, and $((\cmb{s}x)y)z\leq (xz)(yz)$.\end{rlist}\end{definition}
From now on, when we work in an order-pca, we associate to the left and write $abc$ instead of $(ab)c$.
\medskip

\noindent Opcas were defined in \cite{HofstraP:ordpca}, and Longley's definition of applicative morphism for pcas (\cite{LongleyJ:reatls}) was extended there to opcas. For more theory of opcas and unexplained notions and notations, the reader is referred to \cite{OostenJ:reaics}.

Every opca $(A,\leq )$ is a BCO $(A,\leq ,{\cal F}_A)$ where ${\cal F}_A$ consists of the partial maps $\phi _a:b\mapsto ab$ given by the opca structure. Moreover, as BCO every opca has finite internal meets (for the map $a\wedge b$ we can take $\cmb{p}ab$, where $\cmb{p}$ is a pairing combinator in $A$; every element of $A$ can serve as top element, and ${\rm TV}(A)=A$).
\begin{theorem}[Hofstra, 5.1]\label{appl=fp} Let $A$ and $B$ be opcas. A function $f:A\to B$ is an applicative morphism of opcas precisely when it is a finite internal meet-preserving morphism of BCOs.\end{theorem}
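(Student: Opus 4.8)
The plan is to prove the two implications separately, throughout exploiting combinatory completeness of the codomain $B$ (available from its $\cmb{s},\cmb{k}$): any term built from variables, fixed elements, the projections $\cmb{p}_0,\cmb{p}_1$ and application can be realized up to $\leq$ by a single element of $B$. Before starting I would record two reductions. First, in any opca all elements are equivalent in the {\bf BCO}-preorder, since $\cmb{k}y\cdot x\leq y$ for all $x,y$; hence any morphism automatically preserves the internal top, and only preservation of binary meets carries content. Second, the comparison $f(a\wedge b)\to f(a)\wedge f(b)$ is already present for a morphism (meets being right adjoints), so "preserving meets'' amounts to the single extra inequality holding uniformly, i.e.\ to a \emph{packaging} element $m\in B$ with $m(\cmb{p}_B f(a)f(b))\leq f(\cmb{p}_A ab)$ for all $a,b$, recalling that internal meet in an opca is pairing, $a\wedge b=\cmb{p}ab$.

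For the left-to-right implication, suppose $f$ is applicative with tracker $r$, so $r f(a)f(a')\leq f(aa')$ whenever $aa'$ is defined, together with its monotonicity realizer. Condition (i) for a {\bf BCO}-morphism is exactly this monotonicity clause. For condition (ii) I would take, for fixed $c\in A$, the element $d_c=r^{\ast}f(c)$, where $r^{\ast}$ is a curried total variant of $r$ from combinatory completeness, so that $d_c f(a)\leq r f(c)f(a)\leq f(ca)$. The packaging inequality $m$ is obtained by applying the tracker twice, $r(r f(\cmb{p}_A)f(a))f(b)\leq f(\cmb{p}_A ab)$, preceded by the projections reading $f(a),f(b)$ off the $B$-pair; bundling these with combinatory completeness produces $m$. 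Thus $f$ is a meet-preserving {\bf BCO}-morphism.

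The right-to-left implication is the heart of the argument, and where I expect the only genuine difficulty. The obstacle is that condition (ii) furnishes, for each fixed unary function in ${\cal F}_A$ separately, its own realizer, and these are a priori non-uniform; in particular one cannot build a single tracker by specialising one of the two arguments $f(a),f(a')$. The key idea is to reduce the genuinely binary operation of application to a unary one by pairing. Let $\cmb{e}\in A$ be the evaluator combinator, with $\cmb{e}p\leq(\cmb{p}_0 p)(\cmb{p}_1 p)$, so that $\cmb{e}(\cmb{p}_A aa')\leq aa'$ whenever $aa'$ is defined. Applying condition (ii) to the \emph{single} function $\phi_{\cmb{e}}\in{\cal F}_A$ yields one element $d\in B$ with $d f(p)\leq f(\cmb{e}p)$ on its domain. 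Feeding in $p=\cmb{p}_A aa'$ and correcting with the monotonicity realizer $u$ of condition (i) gives $u(d f(\cmb{p}_A aa'))\leq f(aa')$. Finally the packaging element $m$ converts the $B$-pair of $f(a)$ and $f(a')$ into an element below $f(\cmb{p}_A aa')$, and combinatory completeness assembles a single $r\in B$ with $r f(a)f(a')\leq u(d(m(\cmb{p}_B f(a)f(a'))))\leq f(aa')$. This $r$ tracks $f$, so $f$ is applicative.

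In summary, the subtle step is the backward direction: application is two-variable, whereas the {\bf BCO} data speaks only of unary functions one at a time, and it is precisely preservation of binary meets (pairing) that supplies the missing uniformity, letting the one fixed evaluator combinator do the work of the tracker. Everything else reduces to routine manipulation with $\cmb{k},\cmb{s}$ and combinatory completeness in $B$.
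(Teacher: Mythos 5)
Your proof is correct and follows essentially the same route as the paper's own argument, which appears there for the filtered analogue (proposition~\ref{appl=fpp}) and specializes to the present case with trivial filters. In particular, your key step in the backward direction --- using the evaluator combinator $\cmb{e}(\cmb{p}aa')\leq aa'$ to reduce binary application to a single unary map in ${\cal F}_A$, and then using preservation of binary meets to supply the $B$-side pairing --- is exactly the paper's construction of the tracker $r=\langle xy\rangle e\alpha (\cmb{q}xy)$.
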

\begin{definition}\label{filteropca}\em A {\em filtered opca\/} is an opca $A$ together with a subset $A'$ which is closed under application of $A$ and contains choices for $\cmb{k}$ and $\cmb{s}$ (for $A$). The subset $A'$ is called the {\em filter}.\end{definition}
It is immediate that, in definition~\ref{filteropca}, $A'$ itself is an opca; however, not every subset of an opca $A$ which is closed under the application of $A$ and is an opca with this restricted application, is a filter: see \cite{OostenJ:parcaf}, 5.4 for a counterexample.
\medskip

\noindent Every filtered opca $(A,A')$ is a BCO $(A,A',{\cal F}_{A'})$ where ${\cal F}_{A'}$ consists of the partial maps $\phi _a:b\mapsto ab$ with $a\in A'$. Every opca $A$ is of course trivially a filtered opca with $A'=A$. Another example of filtered opcas are meet-semilattices (with top element $\top$): application is the meet operation, and the filter is $\{\top\}$. Many pcas, considered as opcas with the discrete order, contain nontrivial filters: Scott's graph model with the filter of r.e.\ (or, more generally, $\Sigma _n$) subsets of $\mathbb{N}$; Kleene's second model ${\cal K}_2$ of functions $\mathbb{N}\to\mathbb{N}$, with the filter consisting of the total recursive functions (or, more generally, $\Delta _n$-functions).
\medskip

\noindent We need two further notions about BCOs: the {\em downset monad\/} $\cal D$, and the Set-indexed preorder $[-,\Sigma ]$ (for a BCO $\Sigma$).

For any BCO $(\Sigma ,\leq ,{\cal F}_{\Sigma })$ we can consider the poset ${\cal D}\Sigma$ of downwards closed subsets of $\Sigma$, ordered by inclusion, and system of maps ${\cal F}_{{\cal D}\Sigma}$ which consists of those partial maps $F:{\cal D}\Sigma \to {\cal D}\Sigma$ for which there is some $f\in {\cal F}_{\Sigma}$ such that, for all $U\in {\cal D}\Sigma$, $FU$ is defined if and only if $U$ is a subset of the domain of $f$, in which case $FU$ is the downwards closure of $\{ f(a)\, |\, a\in U\}$.

The operation $\cal D$ is the object part of a 2-monad on {\bf BCO}: the unit $\Sigma\to {\cal D}\Sigma$ is given by the principal downset map ${\downarrow}(-)$ sending $a\in\Sigma$ to $\{ b\, |\, b\leq a\}$, and multiplication is union. The monad $\cal D$ is a so-called {\em KZ-monad}; this means that any object carries at most one algebra structure ${\cal D}\Sigma\to\Sigma$, and this structure, when it exists, is left adjoint to the unit map.

We note that if $\Sigma$ is a filtered opca, so is ${\cal D}\Sigma$: if $\Sigma =(A,A')$ then ${\cal D}\Sigma = ({\cal D}A,\Phi )$ where $\Phi$ consists of those downsets of $A$ that meet the filter $A'$.

Every BCO $\Sigma$ gives rise to a Set-indexed preorder $[-,\Sigma ]$: for a set $X$, we have the set of (total) functions $X\to\Sigma$, and for two such functions $\phi ,\psi$ we have $\phi\leq\psi$ if and only if there exists $f\in {\cal F}_{\Sigma}$ such that for all $x\in X$, $f(\phi (x))$ is defined and $f(\phi (x))\leq\psi (x)$. If $\Sigma$ is a filtered opca $(A,A')$, we shall abuse language and write $[-,A]$ for the induced Set-indexed preorder, even though one should be aware that the preorder involves $A'$.

We shall be interested in conditions under which the preorder $[-,\Sigma ]$ is a tripos.

We note that the assignment $\Sigma \mapsto [-,\Sigma ]$ gives a full 2-embedding of {\bf BCO} into the 2-category of Set-indexed preorders. We also note, that the indexed preorder $[-,\Sigma ]$ has indexed finite meets if and only if the BCO $\Sigma$ has internal finite meets. Moreover, a map $h$ between BCOs with internal finite meets preserves those meets if and only if the corresponding transformation between the indexed preorders preserves indexed finite meets.

The following pretty theorem characterizes the filtered opcas among BCOs, in terms of the two notions just discussed:
\begin{theorem}[Hofstra, 6.9]\label{DStriposchar} A BCO $\Sigma$ is (equivalent to) a filtered opca, precisely when the indexed preorder $[-, {\cal D}\Sigma ]$ is a tripos.\end{theorem}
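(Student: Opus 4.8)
The plan is to prove the biconditional by establishing the two implications separately; the passage from a filtered opca to a tripos is the routine ``construction'' half, while the recovery of an applicative structure from a tripos is where the real work lies.

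For the forward implication, suppose $\Sigma=(A,A')$ is a filtered opca. We already know from the remark above that ${\cal D}\Sigma=({\cal D}A,\Phi)$ is again a filtered opca, and I would first make the indexed preorder concrete: unwinding the description of ${\cal F}_{{\cal D}\Sigma}$, the relation $\phi\leq\psi$ in $[X,{\cal D}\Sigma]$ holds exactly when there is a single $a\in A'$ with $ab\in\psi(x)$ for every $x\in X$ and every $b\in\phi(x)$. Thus $[-,{\cal D}\Sigma]$ is precisely the relative realizability indexed preorder over $(A,A')$, and the tripos axioms can be checked by the standard recipe. Fibrewise one defines meet, join and Heyting implication by pairing, coded sum and $U\to V=\{c\mid\forall b\in U,\ cb\in V\}$, all of the required entailments being witnessed by fixed realizers in $A'$ built from \cmb{s}, \cmb{k} and a pairing combinator. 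Quantification along $u\colon X\to Y$ is given by the fibrewise union $\bigcup_{u(x)=y}\phi(x)$ and the uniform intersection $\bigcap_{u(x)=y}\phi(x)$; these are legitimate elements of ${\cal D}A$ exactly because the underlying poset is a complete lattice, which is the whole reason for passing to ${\cal D}\Sigma$ rather than to $\Sigma$. Beck--Chevalley and Frobenius follow from the uniformity of the realizers. A generic predicate costs nothing: for any BCO $\Theta$ the preorder $[-,\Theta]$ has one with ${\rm Prop}=\Theta$ and generic element $\mathrm{id}_\Theta\in[\Theta,\Theta]$, here ${\rm Prop}={\cal D}A$.

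For the converse, assume $[-,{\cal D}\Sigma]$ is a tripos. The fibres are Heyting prealgebras, so in particular they have finite meets, and by the correspondence between fibrewise and internal meets recalled above this forces ${\cal D}\Sigma$ to have internal finite meets. To recover application I would exploit the generic predicate $(\,{\rm Prop},\sigma)=({\cal D}\Sigma,\mathrm{id})$: genericity represents the Heyting implication by an actual function $m\colon{\cal D}\Sigma\times{\cal D}\Sigma\to{\cal D}\Sigma$, and I would then set $U\cdot V=\bigcap\{W\mid U\subseteq m(V,W)\}$, using literal intersection in the complete lattice ${\cal D}\Sigma$. The point of working with the chosen representative $m$ and with honest inclusion, rather than with the bare preorder (which would only return the meet), is that $m$ carries the computational content of implication; unwinding the realizers that the tripos supplies for the intuitionistic tautologies $P\to Q\to P$ and $(P\to Q\to R)\to(P\to Q)\to P\to R$ yields elements behaving like \cmb{k} and \cmb{s}, so that $\cdot$ is combinatorially complete up to the preorder. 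Taking the filter to be the set of designated truth-values ${\rm TV}({\cal D}\Sigma)$ then exhibits ${\cal D}\Sigma$ as a filtered opca.

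The step I expect to be the main obstacle is the descent from ${\cal D}\Sigma$ back to $\Sigma$. The construction just sketched produces a filtered opca structure on ${\cal D}\Sigma$, but the theorem asserts that $\Sigma$ \emph{itself} is equivalent to a filtered opca, and in general $\Sigma\not\simeq{\cal D}\Sigma$ --- indeed $[-,\Sigma]$ need not even be a tripos. One cannot simply pass to a smaller generic predicate, since a family such as the constant $\emptyset$ is already not equivalent to any family of principal downsets. I would therefore try to show that the recovered applicative structure restricts along the unit $\eta\colon\Sigma\to{\cal D}\Sigma$ to the sub-BCO $\eta(\Sigma)$ of principal downsets: for a genuine opca one has ${\downarrow}a\cdot{\downarrow}b={\downarrow}(ab)$, so what must be proved is exactly that the abstractly recovered $\cdot$ sends principal downsets to downsets equivalent to principal ones, i.e.\ that the tripos structure is already determined by its restriction to $\eta(\Sigma)$. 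Since $\eta$ identifies $\Sigma$ with $\eta(\Sigma)$ as BCOs, this closure property would make $\Sigma$, with the induced application and the inherited filter, a filtered opca; full faithfulness of $[-,-]$ then delivers the asserted equivalence. Verifying this closure --- equivalently, the density of $\Sigma$ inside ${\cal D}\Sigma$ with respect to the recovered operations --- is the crux of the argument.
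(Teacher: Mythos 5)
First, a point of comparison: the paper contains no proof of Theorem~\ref{DStriposchar} at all --- it is imported verbatim from Hofstra's paper (his 6.9) in the ``rehearsal'' section~\ref{BCOsection}, and everything downstream uses it as a black box. So your proposal cannot be matched against an in-paper argument, only assessed on its own terms. The forward implication is fine in outline: $[-,{\cal D}(A,A')]$ is exactly the relative realizability tripos over $(A,A')$, and your descriptions of the connectives are the standard ones (modulo the usual definedness fuss in the universal quantifier over empty fibres, which is routine to repair).

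The converse, however, has a genuine gap, and it sits exactly where you place the ``crux.'' Two concrete problems. (a) The operation $U\cdot V=\bigcap\{W\mid U\subseteq m(V,W)\}$ is built from a literal inclusion against a \emph{chosen representative} $m$ of the implication, but genericity determines $m$ only up to pointwise equivalence in the realizability preorder, and replacing $m(V,W)$ by an equivalent but literally smaller downset changes which $W$ qualify and hence changes the intersection drastically (possibly to the empty intersection, i.e.\ the top). Nothing in the tripos axioms guarantees that \emph{this particular} operation is combinatorially complete; a correct argument must work with realizers for entailments rather than with set-theoretic intersections of non-canonical representatives. (b) Even granting an opca structure on ${\cal D}\Sigma$, the ``closure property'' you defer --- that $\dar a\cdot\dar b$ is equivalent to a principal downset --- is not an auxiliary verification but is essentially the entire content of the hard direction, and you give no reason it should hold; moreover, equivalence in $[1,{\cal D}\Sigma]$ only yields inequalities mediated by elements of ${\cal F}_\Sigma$ (e.g.\ $f(\cmb{k}xy)\leq x$ for some $f$), whereas Definition~\ref{opca} demands the literal inequality $\cmb{k}xy\leq x$ in the poset $\Sigma$, so one must additionally modify the order or the operation to land in the equivalence class of an honest filtered opca. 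The strategy of first equipping ${\cal D}\Sigma$ and then descending creates this obstruction; the workable route extracts the applicative structure on $\Sigma$ directly, exploiting that every downset is the existential quantification (union) of the principal downsets it contains, so that the implication of the tripos is already controlled by its values on principal downsets.
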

\begin{theorem}[Hofstra, 6.13]\label{Striposchar} Let $\Sigma$ be a BCO with internal finite meets. Then $[-,\Sigma ]$ is a tripos, precisely when $\Sigma$ is a filtered opca $(A,A')$ which has a pseudo-$\cal D$-algebra structure $\bigvee :{\cal D}\Sigma\to \Sigma$ which satisfies the following condition:\begin{itemize}
\item[$(\ast )$] There is an element $v\in A'$ such that whenever we have an $\alpha\in {\cal D}A$ and $b,c\in A$ for which, for each $a\in\alpha$, $ab$ is defined and $\leq c$, then $v(\bigvee\alpha )b$ is defined and $\leq c$.\end{itemize}\end{theorem}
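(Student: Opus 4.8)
The plan is to prove both implications, using as the organizing device the adjunction between $\Sigma$ and ${\cal D}\Sigma$ supplied by a $\cal D$-algebra structure, and reading condition $(\ast)$ as the assertion that the element $\bigvee\alpha$ can be made to \emph{act} like every member of $\alpha$ by precomposing with the fixed realizer $v$ --- i.e.\ as \emph{modus ponens} for a $\bigvee$-represented implication. I first record two easy points that hold in either direction. Fiberwise finite meets of $[-,\Sigma]$ are exactly the internal finite meets assumed of $\Sigma$ (computed by $a\wedge b=\cmb{p}ab$), so that half of the fiber structure is free. And a generic predicate is automatic: taking the set of propositions to be the underlying set of $\Sigma$ and the generic predicate to be ${\rm id}_\Sigma\in[-,\Sigma](\Sigma)$, every $\phi:X\to\Sigma$ is literally $\phi^\ast({\rm id}_\Sigma)$. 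Thus the genuine content is the Heyting implication, the quantifiers, and --- for the forward direction --- the filtered-opca structure.

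For sufficiency, assume $(A,A')$ is a filtered opca with pseudo-$\cal D$-algebra structure $\bigvee$ satisfying $(\ast)$. I would define the missing fiber and quantifier structure by ``taking joins of realizers''. For $\phi,\psi:X\to A$ put $(\phi\Rightarrow\psi)(x)=\bigvee\{a\in A\mid ab\leq\psi(x)\text{ whenever }b\leq\phi(x)\}$ (a downset in ${\cal D}A$), and similarly define $\forall_f$ and $\exists_f$ along a map $f$ by $\bigvee$-ing the evident fibre-indexed downsets. Condition $(\ast)$, with its fixed $v\in A'$, is precisely what makes modus ponens hold for this implication --- it yields $v(\phi\Rightarrow\psi)(x)\,\phi(x)\leq\psi(x)$ uniformly --- and hence the $\wedge$--$\Rightarrow$ adjunction in each fibre and, in the same way, the adjunctions $\exists_f\dashv f^\ast\dashv\forall_f$. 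At each step the combinatory completeness of $(A,A')$ (the combinators $\cmb{k},\cmb{s}\in A'$ and closure of $A'$ under application) is used to assemble, from the pointwise inequalities, a \emph{single} realizer in $A'$, i.e.\ a single $f\in{\cal F}_{A'}$, which is what the preorder of $[-,\Sigma]$ demands; Frobenius and the Beck--Chevalley conditions then follow from the uniformity (naturality in the index set) of these $\bigvee$-definitions. Together with the finite meets and generic predicate above, this exhibits $[-,\Sigma]$ as a tripos.

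For necessity, assume $[-,\Sigma]$ is a tripos. I would manufacture $\bigvee$ from the existential quantifier. Over the membership set $P=\{(\alpha,a)\mid\alpha\in{\cal D}\Sigma,\ a\in\alpha\}\subseteq{\cal D}\Sigma\times\Sigma$, with projection $p:P\to{\cal D}\Sigma$, the second projection is a predicate $q\in[-,\Sigma](P)$, and I set $\bigvee:=\exists_p(q)\in[-,\Sigma]({\cal D}\Sigma)$, i.e.\ a function ${\cal D}\Sigma\to\Sigma$. Naturality of the tripos quantifiers provides the single realizer witnessing clause (ii) of Definition~\ref{BCOmor}, so $\bigvee$ is a BCO-morphism; the adjunction $\exists_p\dashv p^\ast$ together with a computation on principal downsets gives $\bigvee\dashv\eta$ in {\bf BCO} with $\bigvee\eta\cong{\rm id}$, and since $\cal D$ is a KZ-monad this is exactly a pseudo-$\cal D$-algebra structure. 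Reading the counit of $\exists_p\dashv p^\ast$ back through the opca then yields the realizer $v\in A'$ required for $(\ast)$. Finally, to see that $\Sigma$ is a filtered opca I would recover the application and the combinators $\cmb{k},\cmb{s}$ from the (impredicatively definable) Heyting implication on the object of propositions $\Sigma$ and take the filter to be ${\rm TV}(\Sigma)$, and then verify that $[-,{\cal D}\Sigma]$ is a tripos so as to invoke Theorem~\ref{DStriposchar}; Theorem~\ref{appl=fp} guarantees that the morphisms in sight are the applicative ones.

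The main obstacle I expect is on the necessity side: extracting a bona fide filtered-opca structure --- an honest application with $\cmb{k},\cmb{s}$ in the filter --- from nothing but the abstract tripos axioms, and matching it with Hofstra's characterization via ${\cal D}\Sigma$ in Theorem~\ref{DStriposchar}. A subtlety that pervades both directions is that here $\bigvee$ is \emph{not} assumed to preserve finite meets (that stronger hypothesis is what later singles out the ``complete'', locale-like case), so the reflection $[-,\Sigma]\hookrightarrow[-,{\cal D}\Sigma]$ induced by $\bigvee\dashv\eta$ does \emph{not} transport the Heyting structure for free; the implication and the universal quantifier must be built by hand, and it is exactly at this point that $(\ast)$ --- the existence of a \emph{uniform} evaluator $v\in A'$ for $\bigvee$ --- is indispensable.
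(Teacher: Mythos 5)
The paper does not prove this statement: it is quoted verbatim from Hofstra (\cite{HofstraP:allrr}, Theorem 6.13) in the section that merely rehearses his results, so there is no in-paper proof to compare yours against. Judged on its own terms, your sufficiency direction is sound in outline and, in fact, is essentially the technology the authors themselves deploy later in the proof of theorem~\ref{preimptripos}: defining $(\phi\Rightarrow\psi)(x)$ as $\bigvee$ of the downset of candidate realizers and reading $(\ast)$ as the uniform modus ponens $v(\phi\Rightarrow\psi)(x)\,\phi(x)\leq\psi(x)$ is exactly how the implication, the quantifiers, and the adjunctions are obtained there. Your remark that $\bigvee$ need not preserve finite meets, so the Heyting structure cannot be transported along the reflection and must be built by hand via $(\ast)$, is the right diagnosis and is confirmed by theorem~\ref{Vappmor}, which shows that meet-preservation of $\bigvee$ is an extra condition equivalent to $(\ast)$.

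The necessity direction, however, has a genuine gap at exactly the point you flag, and the route you propose around it is circular. To invoke theorem~\ref{DStriposchar} you must first know that $[-,{\cal D}\Sigma]$ is a tripos, which by that very theorem is equivalent to $\Sigma$ being a filtered opca --- the thing you are trying to establish; and you cannot appeal to the subtripos inclusion $[-,\Sigma]\hookrightarrow[-,{\cal D}\Sigma]$ of corollary~\ref{SsubtriposDS} either, since that corollary is itself deduced from theorem~\ref{Striposchar}. What is actually needed is a direct construction, from the tripos structure of $[-,\Sigma]$ alone, of a partial application on the underlying poset together with elements $\cmb{k},\cmb{s}$ and a filter $A'$ closed under that application; your one-line gesture at ``recovering the application from the Heyting implication on the object of propositions'' is where all the work lives, and it interacts nontrivially with your proposal to take the filter to be ${\rm TV}(\Sigma)$, which is defined via ${\cal F}_{\Sigma}$ and must be shown closed under the newly minted application and to contain the new combinators. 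By contrast, your extraction of $\bigvee$ as $\exists_p(q)$ over the membership relation and of $v$ from the counit of $\exists_p\dashv p^{\ast}$ is the right idea and would go through once the opca structure is in hand.
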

See our theorem~\ref{Vappmor} for a more elegant formulation of the condition $(\ast )$.
\medskip

\noindent We conclude this overview of Hofstra's results with some material on geometric morphisms.
\begin{definition}[Hofstra, 7.1]\label{cdforBCOs}\em A morphism $\phi :\Sigma\to\Theta$ of BCOs is called {\em (computationally) dense\/} if there is some $h\in {\cal F}_{\Theta}$ and a function $H:{\cal F}_{\Theta}\to {\cal F}_{\Sigma}$ satisfying the following property: for $a\in\Sigma$ and $g\in {\cal F}_{\Theta}$, if $\phi (a)$ is in the domain of $g$ then $H(g)(a)$ and $h(\phi (H(g)(a)))$ are defined, and  $h(\phi (H(g)(a)))\leq g(\phi (a))$.\end{definition}
\begin{theorem}[Hofstra, 7.2]\label{cdsubcat} BCOs with dense maps form a sub-preorder-enriched category of {\bf BCO}, to which the monad $\cal D$ restricts.\end{theorem}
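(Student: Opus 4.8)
The plan is to verify four things: that identity morphisms are dense, that dense morphisms are closed under composition (these two already yield a wide sub-preorder-enriched category, since the hom-preorders are simply inherited from $\mathbf{BCO}$ and density is a condition on $1$-cells only), and then that $\cal D$ sends dense maps to dense maps while the unit $\eta_\Sigma = {\downarrow}(-)$ and the multiplication $\mu_\Sigma = \bigcup$ have dense components. Throughout, a density witness for $\phi:\Sigma\to\Theta$ is a pair $(h,H)$ with $h\in{\cal F}_\Theta$ fixed and $H:{\cal F}_\Theta\to{\cal F}_\Sigma$ a function. The recurring technical device is that, because a morphism of BCOs is only monotone ``up to'' its witness $u$ of Definition~\ref{BCOmor}(i), every time I push such a morphism past an inequality or past another $\cal F$-map I must absorb a correction term, which I realize as a single $\cal F$-map using axiom (iii) of Definition~\ref{BCO}.

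For the identity $\mathrm{id}_\Sigma$ I would take $h=i$ (the near-identity of Definition~\ref{BCO}(ii)) and $H=\mathrm{id}_{{\cal F}_\Sigma}$; then $i(g(a))\leq g(a)=g(\mathrm{id}_\Sigma(a))$ is immediate. For composition, let $\phi:\Sigma\to\Theta$ and $\psi:\Theta\to\Xi$ be dense via $(h_\phi,H_\phi)$ and $(h_\psi,H_\psi)$. I set $H(g)=H_\phi(H_\psi(g))$. Given $a$ and $g\in{\cal F}_\Xi$ with $\psi(\phi(a))\in\mathrm{dom}(g)$, density of $\psi$ first gives $h_\psi(\psi(H_\psi(g)(\phi(a))))\leq g(\psi(\phi(a)))$ and shows $\phi(a)\in\mathrm{dom}(H_\psi(g))$; density of $\phi$ applied to the map $H_\psi(g)$ then yields $h_\phi(\phi(H(g)(a)))\leq H_\psi(g)(\phi(a))$. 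The point is to transport this last inequality across $\psi$. Using Definition~\ref{BCOmor}(ii) for $\psi$ at the map $h_\phi$ I obtain a fixed $g'\in{\cal F}_\Xi$ with $g'(\psi(y))\leq\psi(h_\phi(y))$, and using Definition~\ref{BCOmor}(i) a fixed $u_\psi$ with $x\leq x'\Rightarrow u_\psi(\psi(x))\leq\psi(x')$. Chaining $g'$, then $u_\psi$, then $h_\psi$, and collapsing the composite to a single $h\in{\cal F}_\Xi$ via axiom (iii), gives $h(\psi\phi(H(g)(a)))\leq g(\psi\phi(a))$. Crucially $h$ is assembled from $h_\psi,u_\psi,g',h_\phi$ alone, hence is independent of $g$, as required.

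For the monad, the unit and multiplication are easy because they behave as ``sub-identities'' in the KZ sense. For $\eta_\Sigma$, writing a general $G\in{\cal F}_{{\cal D}\Sigma}$ as the $\cal D$-lift ${\cal D}g$ of some $g\in{\cal F}_\Sigma$, one computes $G(\eta_\Sigma(a))={\downarrow}g(a)$ (here monotonicity of $g$ on its downward closed domain, Definition~\ref{BCO}(i), is what makes the downward closure collapse), so $\tilde h={\cal D}i$ and $\tilde H(G)=g$ work. For $\mu_\Sigma$ I take $\tilde H(g)={\cal D}g$ and $\tilde h={\cal D}i$; a direct calculation gives $\mu_\Sigma({\cal D}g({\cal U}))=g(\mu_\Sigma({\cal U}))$, after which the sub-identity $\tilde h$ finishes it. The genuinely delicate step is that $\cal D$ preserves density. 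Let $\phi$ be dense via $(h,H)$; I propose $\tilde H(G)={\cal D}(H(g))$ (with $g$ an underlying witness of $G$) and $\tilde h={\cal D}(h')$, where $h'$ is the single $\cal F$-map realizing the composite $h\circ u$ and $u$ is the monotonicity witness of $\phi$. Writing $({\cal D}\phi)(U)={\downarrow}\phi[U]$, the obstacle is exactly that for $b$ strictly below some $H(g)(a)$ the value $\phi(b)$ need not lie below $\phi(H(g)(a))$, so the pointwise estimate $h(\phi(H(g)(a)))\leq g(\phi(a))$ does not by itself control $\tilde h$ on all of ${\downarrow}\phi[\,{\downarrow}\{H(g)(a)\}\,]$. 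Inserting $u$ repairs this: for $c\leq\phi(b)$ with $b\leq H(g)(a)$ one has $u(c)\leq u(\phi(b))\leq\phi(H(g)(a))$, whence $h(u(c))\leq g(\phi(a))$ by monotonicity of $h$; thus $h'(c)$ lands in ${\downarrow}\{g(\phi(a)):a\in U\}=G(({\cal D}\phi)(U))$, which is the required $\tilde h(({\cal D}\phi)(\tilde H(G)(U)))\leq G(({\cal D}\phi)(U))$.

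I expect this last lemma to be the main obstacle, precisely because of the interaction between non-monotonicity of $\phi$ and the downward closures built into $\cal D$; the fix is to fold $\phi$'s monotonicity witness $u$ into $\tilde h$. The remaining work is routine bookkeeping: checking at each step that the relevant values are defined (all domains in sight are downward closed, so membership propagates downward), and checking that $\tilde h$ together with the various collapsed composites are genuinely independent of the ambient $g$ or $G$. Finally, the monad laws, the naturality of $\eta$ and $\mu$, and the $2$-functoriality of $\cal D$ need no re-proof: they already hold in $\mathbf{BCO}$, and the dense maps form a wide sub-$2$-category with inherited $2$-cells.
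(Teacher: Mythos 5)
Your proof is correct, but note that the paper itself offers no proof to compare it with: Theorem~\ref{cdsubcat} is quoted verbatim from Hofstra's paper (his Theorem 7.2), and the surrounding section explicitly only ``rehearses'' his results. So the only meaningful review is of your argument on its own terms, and it holds up. The two places where something could genuinely go wrong are exactly the two you isolate. In the composition step, the inequality $h_\phi(\phi(H(g)(a)))\leq H_\psi(g)(\phi(a))$ lives in $\Theta$ and must be transported along $\psi$, which is only monotone up to its witness $u_\psi$; your chain $g'$, then $u_\psi$, then $h_\psi$, collapsed to one element of ${\cal F}_\Xi$ by two uses of Definition~\ref{BCO}(iii), is the right fix, and the resulting $h$ visibly depends only on $h_\psi$, $u_\psi$, $g'$, $h_\phi$ and not on $g$. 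In the preservation step, a generic element of $({\cal D}\phi)(\tilde H(G)(U))$ is some $c\leq\phi(b)$ with $b\leq H(g)(a)$, and since $\phi(b)\leq\phi(H(g)(a))$ need not hold, folding $\phi$'s monotonicity witness $u$ into $\tilde h={\cal D}(h')$ with $h'(x)\leq h(u(x))$ is precisely what is needed; definedness propagates because all domains in sight are downward closed. The unit and multiplication computations ($G(\dar a)=\dar g(a)$ by monotonicity of $g$, and $\mu\circ{\cal D}G=G\circ\mu$ on domains that match) are also right. Two cosmetic points only: your $\tilde H$ must choose one underlying $f\in{\cal F}_\Sigma$ for each $F\in{\cal F}_{{\cal D}\Sigma}$ (the witness need not be unique), and in the $\mu$ paragraph the letter $g$ silently changes type from ${\cal F}_\Sigma$ to ${\cal F}_{{\cal D}\Sigma}$; neither affects correctness.
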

\begin{theorem}\label{cdadjoint}\begin{rlist}\item {\bf [Hofstra, 7.3]} For a morphism $\phi$ of BCOs we have: $\phi$ is dense precisely when ${\cal D}\phi$ has a right adjoint. \item {\bf [Hofstra, 7.8]} If $\phi$ is a map between $\cal D$-algebras, then $\phi$ is dense if and only if it has a right adjoint.\end{rlist}\end{theorem}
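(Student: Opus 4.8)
The plan is to prove part (i) by an explicit construction and then to deduce part (ii) from it, with the help of the KZ-structure of ${\cal D}$; the one substantial point in (ii) will be a single counit verification. Throughout I write $\eta_\Sigma={\downarrow}(-):\Sigma\to{\cal D}\Sigma$ for the unit, and, when $\Sigma$ is a ${\cal D}$-algebra, $s_\Sigma:{\cal D}\Sigma\to\Sigma$ for its unique structure map, so that $s_\Sigma\dashv\eta_\Sigma$ with $s_\Sigma\eta_\Sigma=\mathrm{id}$ and $\mathrm{id}\leq\eta_\Sigma s_\Sigma$ (and similarly for $\Theta$).

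For part (i), the candidate right adjoint of ${\cal D}\phi$ is the preimage operator $R(V)=\{a\in\Sigma\mid {\downarrow}a\subseteq\phi^{-1}(V)\}$, the largest downset that ${\cal D}\phi$ sends into $V$. At the level of the underlying posets one checks directly that $U\subseteq R({\cal D}\phi(U))$ and ${\cal D}\phi(R(V))\subseteq V$, and each of these is witnessed as an inequality of {\bf BCO}-morphisms by the near-identity map $i$; so the two adjunction inequalities hold without any appeal to density. The whole force of the statement therefore lies in the assertion that $R$ is a genuine morphism of {\bf BCO}, i.e. that it satisfies Definition~\ref{BCOmor}. Condition (i) there is immediate because $R$ is monotone for inclusion. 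Condition (ii)---producing, for each $g\in{\cal F}_\Theta$, a map in ${\cal F}_{{\cal D}\Sigma}$ compatible with $R$---is exactly what the density data deliver: one takes the witness coming from $H(g)\in{\cal F}_\Sigma$, and the fixed element $h$ together with the inequality $h(\phi(H(g)(a)))\leq g(\phi(a))$ supplies the required comparison. Conversely, reading condition (ii) of a given right adjoint $R$ against each $g$ produces a function $H:{\cal F}_\Theta\to{\cal F}_\Sigma$, while a realizer for the counit ${\cal D}\phi\,R\leq\mathrm{id}$ yields the single map $h$; unwinding recovers Definition~\ref{cdforBCOs}. I expect the main obstacle here to be purely bookkeeping: matching the single density inequality to the morphism condition $G\,R(V)\subseteq R\,F(V)$ (with $F,G$ the downset-maps induced by $g$ and its witness) forces one to keep track of the downward closures built into ${\cal D}\phi$ and $R$ and of the order-modifiers $u$ and $h$.

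For part (ii), let $\Sigma,\Theta$ be ${\cal D}$-algebras. One direction is formal: ${\cal D}$ is a $2$-functor and hence preserves adjunctions, so if $\phi\dashv\psi$ then ${\cal D}\phi\dashv{\cal D}\psi$, and part (i) gives that $\phi$ is dense. For the other direction, suppose $\phi$ is dense; by part (i) we have ${\cal D}\phi\dashv R$, and I would transport $R$ along the reflection $s\dashv\eta$ by setting $\psi=s_\Sigma\circ R\circ\eta_\Theta:\Theta\to\Sigma$. Using the equivalences $\phi\simeq s_\Theta\,{\cal D}\phi\,\eta_\Sigma$ and $\eta_\Theta\phi\simeq{\cal D}\phi\,\eta_\Sigma$ (both of which hold in the {\bf BCO}-preorder, the second being the naturality of $\eta$ witnessed by $i$ and by the map $u$ attached to $\phi$), the unit $\mathrm{id}_\Sigma\leq\psi\phi$ follows immediately from $\mathrm{id}\leq R\,{\cal D}\phi$ and $s_\Sigma\eta_\Sigma=\mathrm{id}$.

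The counit $\phi\psi\leq\mathrm{id}_\Theta$ is where the real work lies, and is the main obstacle of part (ii). The only comparison between $\phi$ and the structure maps available for free is the canonical lax cell $s_\Theta\,{\cal D}\phi\leq\phi\,s_\Sigma$, and this points the wrong way for bounding $\phi\psi$ from above; a purely formal manipulation from ${\cal D}\phi\,R\leq\mathrm{id}$ stalls precisely because $\eta_\Sigma s_\Sigma\geq\mathrm{id}$ cannot be discarded. This is exactly the point at which one must use that $\phi$ is dense, and not merely that $R$ exists as a poset adjoint: working in the {\bf BCO}-preorder, the realizer $h$ together with the fact (from part (i)) that $R$ is a morphism provides the comparison $g(\phi(\psi(y)))\leq y$ that the crude poset computation lacks. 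Once the counit is in place, $\phi\dashv\psi$ follows, since in a preorder-enriched category the triangle identities are automatic.
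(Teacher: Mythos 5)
The paper offers no proof of this theorem: it is quoted from Hofstra's paper (his 7.3 and 7.8) in the preliminary section, so there is nothing in-paper to compare your argument against; I can only measure it against Definitions~\ref{BCOmor} and~\ref{cdforBCOs} as reproduced here. Measured that way, part (i) has a genuine gap at exactly the step you dismiss as ``purely bookkeeping''. The preimage operator $R(V)=\{a\mid{\downarrow}a\subseteq\phi^{-1}(V)\}$ is indeed the poset-level right adjoint of ${\cal D}\phi$, but it is in general not a morphism of {\bf BCO}, and density cannot be used to show that it is. To verify condition ii) of Definition~\ref{BCOmor} for this $R$ against $g\in{\cal F}_\Theta$ you must produce $f\in{\cal F}_\Sigma$ such that $\phi(b)\in{\downarrow}\{g(y)\mid y\in V\}$ for every $a\in R(V)$ and every $b\leq f(a)$; but the density inequality $h(\phi(H(g)(a)))\leq g(\phi(a))$ only controls the composite $h\circ\phi\circ H(g)$, and the modifier $h$ on the outside cannot be stripped off, so it says nothing about where $\phi(H(g)(a))$ itself lands. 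The right adjoint has to be built with $h$ baked into it --- for instance $R(V)={\downarrow}\{a\in\Sigma\mid h(\phi(a))\text{ is defined and lies in }V\}$ --- after which the unit is witnessed by $H(i)$, the counit by a bound for $h\circ u$ (with $u$ the order-modifier of $\phi$), and condition ii) of \ref{BCOmor} by $H(g')$ where $g'\in{\cal F}_\Theta$ tracks $g\circ h$. This is not a relabelling of your construction: it is a different map, and your $R$ is typically not a {\bf BCO}-morphism at all. (Your sketch of the converse direction of (i) is the standard adjunction-unwinding and is fine in outline.)

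For part (ii), the ``only if'' direction via $2$-functoriality and the unit computation for $\psi=s_\Sigma\circ R\circ\eta_\Theta$ are fine. But the counit, which you yourself identify as the place where all the work lies, is never actually carried out: you observe correctly that the canonical lax cell $s_\Theta\,{\cal D}\phi\leq\phi\, s_\Sigma$ points the wrong way, and then simply assert that density ``provides the comparison $g(\phi(\psi(y)))\leq y$''. Unwinding, $\psi(y)=s_\Sigma({\downarrow}\{a\mid h(\phi(a))\leq y\})$, and to bound $\phi(\psi(y))$ from above one must push $\phi$ past $s_\Sigma$, which is precisely the invertibility of that lax cell, i.e.\ that $\phi$ is a pseudomorphism of algebras; nothing in your argument supplies this or a substitute extracted from density. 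The gap is not cosmetic: in the degenerate case where every ${\cal F}$ consists only of the identity, BCOs are posets, ${\cal D}$-algebras are suplattices, every monotone map is dense, yet not every monotone map between suplattices has a right adjoint --- so the counit cannot follow from density alone for an arbitrary map between algebras, and the reading of the statement (morphism of algebras versus mere map between algebras) together with the corresponding extra input must be made explicit before part (ii) can be considered proved.
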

\begin{theorem}[Hofstra, 7.9]\label{localicchar} Let $\Sigma$ be a BCO such that $[-, \Sigma ]$ is a tripos. Then this tripos is localic if and only if the preorder ${\rm TV}(\Sigma )$ has a least element.\end{theorem}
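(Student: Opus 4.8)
The plan is to reduce the statement to a single transparent instance by invoking the standard description of localic triposes. Recall that a tripos over ${\rm Set}$ is localic exactly when it is equivalent, as a ${\rm Set}$-indexed preorder, to the tripos $H^{(-)}$ of some frame $H$, i.e.\ functions into $H$ ordered pointwise. If $[-,\Sigma ]$ is localic then necessarily $H$ is its fibre over the one-point set, namely the poset reflection of $(\Sigma ,\preceq )$, where I write $a\preceq b$ for the fibre order (there is $f\in {\cal F}_{\Sigma}$ with $f(a)\le b$). This reflection is automatically a frame: the tripos supplies finite meets and Heyting implication, while the existential quantifiers $\exists _{X\to 1}$ (equivalently the pseudo-$\cal D$-algebra $\bigvee$) supply all small joins. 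Since evaluation at points $1\to X$ sends $\phi\colon X\to\Sigma$ to the family $(\phi (x))_x$, the comparison $[X,\Sigma ]\to H^X$ is always surjective and monotone; hence $[-,\Sigma ]$ is localic if and only if this comparison is order-reflecting for every $X$, that is, if and only if the tripos order is detected pointwise: $\phi\le\psi$ in $[X,\Sigma ]$ iff $\phi (x)\preceq\psi (x)$ for all $x\in X$.

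Next I would convert this ``pointwise order'' condition into a condition about truth, using that reindexing along a point $x\colon 1\to X$ is a Heyting morphism and so commutes with $\Rightarrow$ and $\top$. Writing $\theta =\phi\Rightarrow\psi$, the uniform inequality $\phi\le\psi$ is equivalent to $\top _X\le\theta$, i.e.\ to the existence of a single $f\in {\cal F}_{\Sigma}$ with $f(\top )\le\theta (x)$ for all $x$; and the pointwise inequality $\phi (x)\preceq\psi (x)$ for all $x$ is equivalent to $\top\preceq\theta (x)$ for all $x$, i.e.\ to $\theta (x)\in {\rm TV}(\Sigma )$ for all $x$. Since every predicate $\theta$ equals $\top _X\Rightarrow\theta$, the pointwise-order condition is therefore equivalent to the following: for every set $X$ and every $\theta\colon X\to\Sigma$ with $\theta (x)\in {\rm TV}(\Sigma )$ for all $x$, there is one $f\in {\cal F}_{\Sigma}$ with $f(\top )\le\theta (x)$ for all $x$.

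Finally I would match this last condition with the existence of a least element of ${\rm TV}(\Sigma )$ in $(\Sigma ,\le )$. For the easy direction, if $m=\min {\rm TV}(\Sigma )$ and $f_0\in {\cal F}_{\Sigma}$ witnesses $m\in {\rm TV}(\Sigma )$ (so $f_0(\top )\le m$), then for any $\theta$ valued in ${\rm TV}(\Sigma )$ we have $f_0(\top )\le m\le\theta (x)$ for every $x$, so the single map $f_0$ works. For the converse I would feed in the universal instance $X={\rm TV}(\Sigma )$ with $\theta$ the inclusion ${\rm TV}(\Sigma )\hookrightarrow\Sigma$: the hypothesis $\theta (a)=a\in {\rm TV}(\Sigma )$ holds trivially, so the condition yields an $f$ with $f(\top )\le a$ for all $a\in {\rm TV}(\Sigma )$; then $m:=f(\top )$ lies in ${\rm TV}(\Sigma )$ (via $f(\top )\le m$) and is below every element of ${\rm TV}(\Sigma )$, hence is its least element. (Using the map $i$ of Definition~\ref{BCO} one checks $i(m)\le m\le a$, so a $\le$-least element is automatically $\preceq$-least and the two readings of ``least'' agree.)

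The routine parts are the frame computations and the verification that the comparison with $H^{(-)}$ is an equivalence once the order is pointwise. The step I expect to carry the real weight is the first one: importing the characterization of localic triposes and checking that $(\Sigma ,\preceq )$ reflects to a genuine frame, so that the whole question collapses to comparing the uniform and the pointwise orders. After that, specializing to $X={\rm TV}(\Sigma )$ is the crux that pins down ``least element'' and closes the equivalence.
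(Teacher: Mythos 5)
The paper does not prove this statement: it is quoted verbatim from Hofstra's paper (his Theorem 7.9) in the preliminary section, so there is no internal proof to compare against and your argument has to stand on its own. It essentially does. The chain ``localic $\Leftrightarrow$ the pointwise comparison $[X,\Sigma]\to H^X$ is order-reflecting for every $X$ $\Leftrightarrow$ every ${\rm TV}(\Sigma)$-valued predicate admits a single realizer over $\top$ $\Leftrightarrow$ ${\rm TV}(\Sigma)$ has a least element'' is sound, the passage through $\theta=\phi\Rightarrow\psi$ correctly uses that reindexing along points preserves the Heyting structure, and the instantiation at $X={\rm TV}(\Sigma)$ with $\theta$ the inclusion is exactly the right universal instance; the element $m=f(\top)$ it produces is least for the partial order $\leq$ of $\Sigma$, which is the reading the paper itself relies on later (see the proofs of \ref{Kr=Hof} and \ref{localiccrit}).

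One point needs repair, though it is not load-bearing. You assert that the poset reflection $H$ of $(\Sigma,\preceq)$ is ``automatically a frame'' because the quantifiers $\exists_{X\to 1}$ (equivalently $\bigvee$) supply small joins. That justification is wrong as stated: $\exists_{X\to 1}\phi$ is characterized by a \emph{uniform} adjunction ($\exists_{X\to1}\phi\preceq b$ iff a single $f$ works for all $x$), so it is an upper bound of the $\phi(x)$ in $H$ but need not be their \emph{least} upper bound --- the gap between the uniform and pointwise orders is precisely what distinguishes non-localic triposes. Fortunately your argument never needs this claim in that form: in the direction where $H$ must be a frame you have already shown the comparison is an equivalence of indexed preorders, so $H^{(-)}$ is a tripos and $H$ is a frame by the standard characterization of triposes of the form $H^{(-)}$; in the other direction $H$ is isomorphic to the frame witnessing localicness. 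I would also spell out the pseudo-naturality argument showing that an arbitrary equivalence $[-,\Sigma]\simeq K^{(-)}$ agrees up to isomorphism with the evaluation comparison into $H^{(-)}$, since that is where ``localic $\Rightarrow$ order-reflecting'' actually gets proved.
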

\subsection{Filtered opcas, triposes and dense morphisms}
In this section we present some notions Hofstra did not explicitly give in his paper. In particular, we need an appropriate definition of morphism between filtered opcas, as well as a characterization of the dense ones among these. Moreover, we have some refinements and generalizations.
\begin{definition}\label{applfilt}\em Let $(A,A')$ , $(B,B')$ be filtered opcas. An {\em applicative morphism\/} $(A,A')\to (B,B')$ is a function $f:A\to B$ satisfying the following conditions:\begin{rlist}\item For all $a\in A'$ there is a $b\in B'$ with $b\leq f(a)$ (so, $f$ maps $A'$ into the upwards closure of $B'$).
\item There is an element $r\in B'$ such that for all $a'\in A'$ and $a\in A$, whenever $a'a$ is defined in $A$, $rf(a')f(a)$ is defined in $B$ and $rf(a')f(a)\leq f(a'a)$.
\item There is an element $u\in B'$ such that for every inequality $x\leq y$ in $A$, $uf(x)$ is defined and $uf(x)\leq f(y)$.\end{rlist}\end{definition}
The following result corresponds to theorem~\ref{appl=fp}:
\begin{proposition}\label{appl=fpp} For filtered opcas $(A,A')$ and $(B,B')$, a function $f:A\to B$ is an applicative morphism precisely when it is a finite-meet preserving map of BCOs.\end{proposition}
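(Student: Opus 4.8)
The plan is to prove both implications by transferring Hofstra's argument for Theorem~\ref{appl=fp} to the filtered setting, the genuinely new ingredient being the bookkeeping that forces every auxiliary combinator to lie in the filter $B'$. Throughout I use that $A'$ and $B'$, being closed under application and containing $\cmb{k},\cmb{s}$, contain a pairing combinator $\cmb{p}$ with projections $\cmb{p}_1,\cmb{p}_2$, an identity $\cmb{i}$ with $\cmb{i}x\le x$, and an evaluation combinator $\cmb{E}$ with $\cmb{E}(\cmb{p}xy)\le xy$; recall also that, as a BCO, $(A,A')$ has as internal top any element of $A'$ and internal binary meet $a\wedge b=\cmb{p}ab$, and that ${\rm TV}(A)$ is exactly the upward closure of $A'$.

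First suppose $f$ is an applicative morphism. Condition (iii) of Definition~\ref{applfilt} is literally condition (i) of Definition~\ref{BCOmor}. For condition (ii) of Definition~\ref{BCOmor} I take $\phi_{a'}\in{\cal F}_{A'}$ with $a'\in A'$, choose $c\in B'$ with $c\le f(a')$ by condition (i) of Definition~\ref{applfilt}, and set $g=\phi_{rc}$; since $rc\in B'$ and, by monotonicity of application, $rcf(a)\le rf(a')f(a)\le f(a'a)$, this witnesses the condition. To see that $f$ preserves internal finite meets, note that condition (i) of Definition~\ref{applfilt} says exactly that $f$ carries ${\rm TV}(A)$ into ${\rm TV}(B)$, which is top-preservation; the easy half of binary-meet preservation, a combinator $m\in B'$ with $mf(\cmb{p}ab)\le\cmb{p}_Bf(a)f(b)$, is obtained by extracting the two components with $\cmb{p}_1,\cmb{p}_2$, transporting them through $r$ and the filter-elements supplied by condition (i), and re-pairing with $\cmb{p}_B$.

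Conversely suppose $f$ is a finite-meet preserving morphism of BCOs. Condition (iii) of Definition~\ref{applfilt} is read off from condition (i) of Definition~\ref{BCOmor}, and condition (i) follows from the fact that a finite-meet preserving morphism sends ${\rm TV}(A)$ into ${\rm TV}(B)$ (using top-preservation and the description of ${\rm TV}$). The heart of this direction is the single realizer $r\in B'$ of condition (ii). Here I apply condition (ii) of Definition~\ref{BCOmor} to the filter map $\phi_{\cmb{E}}$, obtaining $d\in B'$ with $df(z)\le f(\cmb{E}z)$ for all $z$; taking $z=\cmb{p}a'a=a'\wedge a$ and using $\cmb{E}(\cmb{p}a'a)\le a'a$ together with the order realizer $u$ carries $f(a'\wedge a)$ below $f(a'a)$ inside $B'$. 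Composing with the combinator $n\in B'$ witnessing the nontrivial direction of binary-meet preservation, $n(\cmb{p}_Bf(a')f(a))\le f(a'\wedge a)$, yields $r:=\lambda xy.\,u\bigl(d\bigl(n(\cmb{p}_Bxy)\bigr)\bigr)\in B'$ with $rf(a')f(a)\le f(a'a)$.

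The step I expect to be the main obstacle is the nontrivial direction of binary-meet preservation needed for the forward implication: manufacturing, from an applicative morphism, a combinator in $B'$ that builds $f(\cmb{p}ab)$ out of $f(a)$ and $f(b)$. In the unfiltered Theorem~\ref{appl=fp} one just applies the application-realizer twice, $r\bigl(rf(\cmb{p})f(a)\bigr)f(b)\le f(\cmb{p}ab)$; but in the filtered case the intermediate value $\cmb{p}a$ need not lie in $A'$, so the second application is not directly tracked by $r$. The work will be to reroute the construction so that every head of an application stays inside the filter, using condition (i) of Definition~\ref{applfilt} to replace the images of the fixed combinators by genuine elements of $B'$ and the closure of $A'$ under $\cmb{k},\cmb{s}$ to keep the partially applied terms filtered. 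This is precisely the point at which Definition~\ref{applfilt} has been tailored so as to match meet-preservation, and I expect it to be where the essential computation lies.
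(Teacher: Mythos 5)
Your converse direction and the routine parts of the forward direction track the paper's proof closely: your $r=\langle xy\rangle u\bigl(d\bigl(n(\cmb{q}xy)\bigr)\bigr)$ built from the evaluator $\cmb{E}=\langle z\rangle(\cmb{p}_0z)(\cmb{p}_1z)$ is, up to renaming, the paper's $\langle xy\rangle e\alpha (\cmb{q}xy)$, and your treatment of conditions i) and iii) and of top-preservation is the same as the paper's. The genuine gap is exactly the step you defer: producing an element of $B'$ realizing $\cmb{q}f(a)f(a')\leq f(\cmb{p}aa')$ (the nontrivial half of binary-meet preservation in the forward direction). This is the only place in the whole proposition where a combinator actually has to be manufactured, and your proposal announces it as ``the main obstacle'' and ``where the essential computation lies'' without carrying it out; a plan to ``reroute the construction so that every head stays inside the filter'' is not a construction. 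The paper's realizer is $\cmb{u}=\langle x\rangle r\bigl(rf(\cmb{p})(\cmb{q}_0x)\bigr)(\cmb{q}_1x)$: one application of $r$ with the filter element $\cmb{p}$ in head position gets below $f(\cmb{p}a)$, and a second application of $r$ to that and to $\cmb{q}_1x\leq f(a')$ gets below $f(\cmb{p}aa')$.

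Your diagnosis of why this is delicate --- the intermediate head $\cmb{p}a$ need not lie in $A'$, so the second application is not covered by the letter of Definition~\ref{applfilt}~ii), which quantifies the head over $A'$ only --- is a fair observation, and indeed the paper's two-step term is only justified if condition ii) is read as tracking $a'a$ for \emph{arbitrary} $a'\in A$ (as in the usual notion of applicative morphism, with only the realizer $r$ required to lie in $B'$). That is evidently the intended reading: it is how the forward direction uses $r$, and it is what the converse direction actually delivers, since $\langle xy\rangle e\alpha (\cmb{q}xy)$ works for all heads, so the stated equivalence is consistent under it. But the rerouting you hope for does not obviously exist --- every decomposition of $\cmb{p}aa'$ as an iterated application that I can see eventually places a term depending on the arbitrary $a$ or $a'$ in head position --- so under the literal filter-restricted reading the obstacle you identify is real and unresolved, while under the intended reading there is no obstacle and the computation is a one-liner. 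Either way, as written your proposal contains no proof of the forward implication's meet-preservation, which is the substance of the proposition.
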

\begin{proof} Let $\phi :(A,A')\to (B,B')$ be an applicative morphism. Then $\phi$ is a map of BCOs: requirement i) of definition~\ref{BCOmor} is identical to requirement iii) of \ref{applfilt}, and for ii) of \ref{BCOmor}, given $f\in A'$, pick $b\in B'$ such that $b\leq \phi (f)$ (by i) of \ref{applfilt}) and let $g\equiv \langle y\rangle rby$, where $r$ is from ii) of \ref{applfilt}. If $fa$ is defined in $A$, then $r\phi (f)\phi (a)\leq\phi (fa)$, so $g\phi (a)\leq rb\phi (a)\leq r\phi (f)\phi (a)\leq \phi (fa)$.

We need to show that $\phi$ preserves internal finite meets. Since $\phi$ maps $A'$ into the upwards closure of $B'$, $\phi$ preserves the terminal object. Binary internal meets are given by the pairing combinators in the respective opcas. If we denote pairing and unpairing in $A$ by $\cmb{p},\cmb{p}_0,\cmb{p}_1$ and in $B$ by $\cmb{q},\cmb{q}_0,\cmb{q}_1$, then for
$$\cmb{t}\; =\; \langle x\rangle \cmb{q}(r\phi (\cmb{p}_0)x)(r\phi (\cmb{p}_1)x)$$
we have $\cmb{t}\phi (\cmb{p}aa')\leq\cmb{q}\phi (a)\phi (a')$ for all $a,a'\in A$; for
$$\cmb{u}\; =\;\langle x\rangle r(r\phi (\cmb{p})(\cmb{q}_0x))(\cmb{q}_1x)$$
we have $\cmb{u}(\cmb{q}\phi (a)\phi (a'))\leq\phi (\cmb{p}aa')$ for all $a,a'\in A$. So $\phi$ preserves internal finite meets.

Conversely, suppose $\phi :(A,A')\to (B,B')$ is a morphism of BCOs which preserves internal finite meets. Requirement i) of \ref{applfilt} is satisfied because $\phi$ preserves top elements. Requirement iii) is satisfied because $\phi$ is a map of BCOs. As for requirement ii), let $\alpha\in B'$ be such that for all $a,a'\in A$,
$$\alpha (\cmb{q}\phi (a)\phi (a'))\leq\phi (\cmb{p}aa')$$
(since $\phi$ preserves finite meets). There is an element $d\in A'$ such that whenever $aa'$ is defined in $A$, $d(\cmb{p}aa')\leq aa'$. Since $\phi$ is a map of BCOs, there is $e\in B'$ such that when $aa'$ is defined in $A$, $e\phi (\cmb{p}aa')\leq\phi (aa')$. Combining, we have for $a,a'\in A$ such that $aa'$ is defined,
$$e\alpha (\cmb{q}\phi (a)\phi (a'))\leq\phi (aa')$$
so if $r=\langle xy\rangle e\alpha (\cmb{q}xy)$ then $r$ satisfies requirement ii) of \ref{applfilt}.\end{proof}
\medskip

\noindent Next, we look at filtered opcas $(A,A')$ for which the indexed preorder $[-,A]$ is a tripos. By Hofstra's theorem~\ref{Striposchar}, $(A,A')$ carries the structure of a pseudo $\cal D$-algebra satisfying the condition $(\ast )$. In order to be explicit and to fix notation, let us define what we mean by ``pseudo $\cal D$-algebra'':
\begin{definition}\label{Vchar}\em A pseudo $\cal D$-algebra structure on a BCO $\Sigma$ is a function $\bigvee :{\cal D}\Sigma\to\Sigma$ satisfying the following conditions, where we write ${\downarrow}\alpha$ for the downwards closure of $\alpha$, and ${\downarrow}a$ for ${\downarrow}\{ a\}$:\begin{arlist}
\item There is $u\in {\cal F}_{\Sigma}$ such that for every inclusion $\alpha\subseteq\alpha '$ in ${\cal D}{\Sigma}$, $u(\bigvee\alpha )$ is defined and $u(\bigvee\alpha )\leq\bigvee\alpha '$.
\item For all $f\in {\cal F}_{\Sigma}$ there is some $g_2\in {\cal F}_{\Sigma}$ such that for all $\alpha\in {\cal D}\Sigma$: if for all $x\in\alpha$ $f(x)$ is defined, then $g_2(\bigvee\alpha )$ is defined and $g_2(\bigvee\alpha )\leq\bigvee ({\downarrow}\{ f(x)\, |\, x\in\alpha\} )$.
\item There are elements $g_3,h_3\in {\cal F}_{\Sigma}$ such that for all ${\cal A}\in {\cal D}^2\Sigma$:
$$\begin{array}{l} g_3(\bigvee ({\downarrow}\{\bigvee\alpha\, |\,\alpha\in {\cal A}\} ))\leq\bigvee (\bigcup {\cal A}) \\
h_3(\bigvee (\bigcup {\cal A}))\leq\bigvee ({\downarrow}\{\bigvee\alpha\, |\,\alpha\in {\cal A}\} ) \end{array}$$
\item There are elements $g_4,h_4\in {\cal F}_{\Sigma}$ such that for all $a\in\Sigma$, $g_4(\bigvee ({\downarrow}a))\leq a$ and $h_4(a)\leq\bigvee ({\downarrow}a)$.
\end{arlist}\end{definition}
\begin{theorem}\label{Vappmor} A filtered opca $(A,A')$ with pseudo ${\cal D}$-algebra structure $\bigvee$ satisfies condition $(\ast )$ of \ref{Striposchar}, precisely when $\bigvee$ is an applicative morphism of filtered opcas.\end{theorem}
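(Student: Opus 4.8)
The plan is to read the statement as the coincidence, clause by clause, of the three requirements of Definition~\ref{applfilt} for the candidate applicative morphism $\bigvee :({\cal D}A,\Phi )\to (A,A')$ with the data already at hand. Here I use that ${\cal D}(A,A')=({\cal D}A,\Phi )$, with $\Phi$ the downsets meeting $A'$, and that the opca application on ${\cal D}A$ is $\alpha\beta ={\downarrow}\{ ab\mid a\in\alpha ,b\in\beta ,ab{\downarrow}\}$. The first thing I would record is that requirements i) and iii) of \ref{applfilt} hold for $\bigvee$ \emph{for free}, being consequences of the pseudo ${\cal D}$-algebra axioms of \ref{Vchar} together with the description of $\Phi$; consequently the whole force of the theorem lies in the equivalence of requirement ii), the lax multiplicativity of $\bigvee$, with $(\ast )$.

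For the free clauses: requirement iii) is verbatim axiom~1) of \ref{Vchar}, its witness being the function $\phi_{a_0}$ of a filter element $a_0\in A'$. For requirement i), given $\alpha\in\Phi$ I would pick $a^\ast\in\alpha\cap A'$; since ${\downarrow}a^\ast\subseteq\alpha$, axiom~1) and axiom~4) (whose witness $h_4$ is $\phi_{a_1}$ for some $a_1\in A'$) combine to give $a_0(a_1a^\ast )\leq\bigvee\alpha$, and $a_0a_1a^\ast\in A'$ is the filter element demanded by i). Note in passing that axioms~1) and~4) already force a least element $\bot =g_4(u(\bigvee\emptyset ))$ in $A$, a fact I expect to need below.

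For the direction ``$\bigvee$ an applicative morphism $\Rightarrow (\ast )$'' I would start from the multiplicativity witness $r\in A'$ of ii). Given $\gamma\in{\cal D}A$ and $b,c$ with $ab\leq c$ for every $a\in\gamma$, put $\beta ={\downarrow}b$; monotonicity of application gives $\gamma\beta\subseteq{\downarrow}c$. When $\gamma\in\Phi$, clause ii) supplies $r(\bigvee\gamma )(\bigvee\beta )\leq\bigvee(\gamma\beta )$, and axiom~1) followed by the $g_4$ of axiom~4) improves $\bigvee(\gamma\beta )\leq\bigvee{\downarrow}c$ to $\leq c$; rewriting $\bigvee\beta$ via $h_4(b)=a_1b$ and absorbing the filter combinators yields a single $v=\langle xy\rangle\ldots\in A'$ with $v(\bigvee\gamma )b\leq c$. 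Conversely, assuming $(\ast )$ with witness $v$, to obtain ii) I would apply $(\ast )$ to the downset $\alpha\in\Phi$ and the element $\bigvee\beta$: once $a(\bigvee\beta )\leq c$ holds for all $a\in\alpha$ we get $v(\bigvee\alpha )(\bigvee\beta )\leq c$, and taking for $c$ the join $\bigvee(\alpha\beta )$, corrected by the monotonicity and principal-downset combinators, and packaging $r$ from $v$ and these witnesses, delivers the multiplicativity inequality.

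The hard part, and the genuine obstacle, is the mismatch between the quantifiers: clause ii) constrains only products whose first factor $\alpha$ meets $A'$, whereas $(\ast )$ ranges over all $\gamma\in{\cal D}A$. In the first direction this leaves the downsets with $\gamma\cap A'=\emptyset$ untreated --- precisely the case where ii) is silent --- and here I would have to argue directly, expecting to reduce $\gamma$ to the maximal witness $\{ a\mid ab\leq c\}$ by monotonicity and to fall back on the least element $\bot$; verifying that the \emph{same} combinator $v$ still works on these downsets is the step I anticipate being most delicate. Two further technical points pervade both directions and must be handled with care rather than ingenuity: the definedness of the applications (e.g.\ that $a(\bigvee\beta )$ is defined for each $a\in\alpha$), which follows from the downward-closed domains of the opca functions, and the laxity of the pseudo-algebra --- every equality holding only up to a filter combinator --- which turns the verification into disciplined combinator bookkeeping aimed at keeping all inequalities literal, as $(\ast )$ demands.
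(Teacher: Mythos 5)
Your reduction of the theorem to the equivalence of requirement ii) of Definition~\ref{applfilt} with $(\ast )$ is the right frame, and your treatment of requirements i) and iii) and of the direction ``applicative morphism $\Rightarrow (\ast )$'' matches the paper's proof (your combinator $\langle xy\rangle g_4(u(rx(h_4y)))$ is exactly the paper's witness $\cmb{v}$). But the direction $(\ast )\Rightarrow$ ii) has a genuine gap. You propose to apply $(\ast )$ once, to the downset $\alpha$ and the element $\bigvee\beta$, ``once $a(\bigvee\beta )\leq c$ holds for all $a\in\alpha$''. That premise is not a matter of definedness bookkeeping: $\bigvee\beta$ need not lie below any element of $\beta$, so downward closure of domains gives you neither that $a(\bigvee\beta )$ is defined nor that it is $\leq\bigvee (\alpha\beta )$. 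Establishing it is essentially the problem you are trying to solve, with the join now sitting in the \emph{argument} position, about which $(\ast )$ says nothing directly. The paper resolves this by applying $(\ast )$ \emph{twice}, once per factor, with a flip in between: first, for fixed $b\in\beta$, one gets $v(\bigvee\dar\{\xi a\, |\, a\in\alpha\})b\leq\bigvee (\alpha\beta )$ where $\xi =\langle xy\rangle u(h_4(xy))$; then, setting $\eta =\langle yx\rangle vxy$ so that function and argument are exchanged, the downset $\dar\{\eta b\, |\, b\in\beta\}$ satisfies the hypothesis of $(\ast )$ against the fixed argument $\bigvee\dar\{\xi a\, |\, a\in\alpha\}$, and a second application of $(\ast )$ together with axiom~2) of \ref{Vchar} yields the realizer $z=\langle xy\rangle\eta (\xi 'x)(\eta 'y)$. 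This flip-and-iterate step is the real content of the converse direction and is absent from your plan.

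Conversely, the obstacle you single out as ``the hard part'' --- that clause ii) only constrains products whose first factor meets $A'$ --- is not where the paper spends any effort: its converse direction in fact establishes $z(\bigvee\alpha )(\bigvee\beta )\leq\bigvee (\alpha\beta )$ for \emph{arbitrary} $\alpha ,\beta\in {\cal D}A$, and its forward direction uses requirement ii) in that unrestricted form, so no separate treatment of downsets disjoint from $A'$ is needed and no fallback on a least element occurs. (You are reading the ``$a'\in A'$'' in Definition~\ref{applfilt}~ii) literally; on that reading the paper's own forward direction would have the same gap you flag, but the multiplicativity of $\bigvee$ that is actually proved and used is the unrestricted one.) Your observation that axioms 1) and 4) force a least element $g_4(u(\bigvee\emptyset ))$ is correct but plays no role in the argument.
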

\begin{proof} First suppose $\bigvee$ is an applicative morphism. So, we have $r\in A'$ such that whenever $\alpha\beta$ is defined in ${\cal D}(A,A')$, $r(\bigvee\alpha )(\bigvee\beta )$ is defined and $r(\bigvee\alpha )(\bigvee\beta )\leq\bigvee\dar\{ ab\, |\, a\in\alpha ,b\in\beta\}$.

Now suppose for all $a\in\alpha$ that $ab$ is defined and $ab\leq c$. Then $\alpha (\dar b)$ is defined and $$\alpha (\dar b)\subseteq\dar\{ ab\, |\, a\in\alpha ,b\in\beta\}\subseteq\dar c$$
We have $r(\bigvee\alpha )(\bigvee\dar b)\leq\bigvee (\alpha (\dar b))$. We have $u\in A'$ such that
$$u(r(\bigvee\alpha )(\bigvee\dar b))\leq u(\bigvee (\alpha (\dar b)))\leq\bigvee\dar c$$
Let $g_4, h_4\in A'$ be as given by definition~\ref{Vchar} 4). Then since $h_4b\leq\bigvee\dar b$,
$$\begin{array}{l} g_4(u(r(\bigvee\alpha )(\bigvee\dar b)))\leq g_4(\bigvee\dar c)\leq c \\
g_4(u(r(\bigvee\alpha )(h_4 b)))\leq g_4(\bigvee\dar c)\leq c\end{array}$$
Now let $\cmb{v} =\langle xy\rangle g_4(u(rx(h_4y)))$. It is easy to verify that $v\in A'$ and that $v$ satisfies the condition $(\ast )$.
\medskip

\noindent Conversely, suppose $v$ satisfies $(\ast )$. We have to prove that $\bigvee$ is an applicative morphism. For i) of \ref{applfilt}, we have to prove that for $\alpha$ in the filter of ${\cal D}(A,A')$, $\bigvee\alpha$ is in the upwards closure of $A'$. The filter of ${\cal D}(A,A')$ consists of those downsets of $A$ which intersect $A'$. Pick $a\in\alpha \cap A'$. Then $\dar a\subseteq\alpha$ so $u(\bigvee \dar a)\leq\bigvee\alpha$ (where $u$ is from 1) of \ref{Vchar}). And $h_4a\leq\bigvee\dar a$, so
$$u(h_4a)\leq u(\bigvee\dar a)\leq\bigvee\alpha$$
Since $u$ and $h_4$ are in $A'$, we see that i) is satisfied.

Condition iii) of \ref{applfilt} holds because $\bigvee$ is supposed to be a map of BCOs.

For \ref{applfilt} ii), suppose $\alpha\beta$ is defined, so for all $a\in\alpha ,b\in\beta$, $ab$ is defined in $A$. Note that $u(\bigvee\dar ab)\leq\bigvee\alpha\beta$ for $a\in\alpha ,b\in\beta$. Also, $h_4(ab)\leq\bigvee\dar (ab)$, so
$$u(h_4(ab))\leq u(\bigvee\dar (ab))\leq\bigvee (\alpha\beta )$$
Let $\xi =\langle xy\rangle u(h_4(xy))$, then $\xi\in A'$ and for a fixed $b\in\beta$ we have for all $a\in\alpha$, $\xi ab\leq\bigvee (\alpha\beta )$. By $(\ast )$ we have that $v(\bigvee\{\xi a\, |\, a\in\alpha\} )b$ is defined and
$$v(\bigvee\{\xi a\, |\, a\in\alpha\} )b\leq\bigvee (\alpha\beta )$$
Let $\eta =\langle yx\rangle vxy$. Then $\eta\in A'$ and
$$\eta b(\bigvee\{ \xi a\, |\, a\in\alpha\} )\leq v(\bigvee\{\xi a\, |\, a\in\alpha\} )b\leq\bigvee (\alpha\beta )$$
This holds for all $b\in\beta$, so by $(\ast )$ we have
$$v(\bigvee\{\eta b\, |\, b\in\beta\} )(\bigvee\{\xi a\, |\, a\in\alpha\} )\leq\bigvee (\alpha\beta )$$
Hence,
$$\eta (\bigvee\{\xi a\, |\, a\in\alpha\} )(\bigvee\{\eta b\, |\, b\in\beta\} )\leq\bigvee (\alpha\beta )$$
By 2) of \ref{Vchar}, choose $\xi ',\eta '\in A'$ such that for all $\alpha ,\beta$,
$$\begin{array}{l} \xi '(\bigvee\alpha )\leq\bigvee\{\xi a\, |\, a\in\alpha\} \\
\eta '(\bigvee\beta )\leq\bigvee\{\eta b\, |\, b\in\beta \} \end{array}$$
and let $z=\langle xy\rangle \eta (\xi 'x)(\eta 'y)$. Then $z\in A'$ and
$$\begin{array}{lllll} z(\bigvee\alpha )(\bigvee\beta ) & \leq & \eta (\xi '(\bigvee\alpha ))(\eta '(\bigvee\beta )) & & \\
 & \leq & \eta (\bigvee\{\xi a\, |\, a\in\alpha\} )(\bigvee\{\eta b\, |\, b\in\beta\} ) & \leq & \bigvee (\alpha\beta ) \end{array}$$
so $z$ realizes condition ii) of \ref{applfilt}.\end{proof}

\begin{remark}\em 1. Note that one may reformulate theorem~\ref{Vappmor} thus: for a filtered opca $(A,A')$, the Set-indexed preorder $[-,A]$ is a tripos precisely when $(A,A')$ is a pseudo $\cal D$-algebra {\em in the subcategory (of BCO) of filtered opcas and applicative morphisms}.

2. Theorem~\ref{Vappmor} is, in view of proposition~\ref{appl=fpp}, the generalization to the context of filtered opcas, of the condition of infinite distributivity for locales. Indeed, a suplattice $L$ is a locale precisely when the supremum map $\bigvee :{\cal D}L\to L$ preserves finite meets: $\bigvee (\alpha\cap\beta )=(\bigvee\alpha )\wedge (\bigvee\beta )$.\end{remark}

\noindent Let us draw an immediate inference from theorem~\ref{Vappmor}:
\begin{corollary}\label{SsubtriposDS} Suppose $\Sigma$ is a BCO such that $[-,\Sigma ]$ is a tripos. Then $[-,\Sigma ]$ is a subtripos of $[-,{\cal D}\Sigma ]$.\end{corollary}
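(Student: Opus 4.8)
The plan is to exhibit the geometric inclusion directly from the adjoint pair of indexed-preorder transformations induced by the $\cal D$-algebra structure on $\Sigma$. First I would observe that, since $[-,\Sigma ]$ is a tripos, it has indexed finite meets, so $\Sigma$ has internal finite meets and Theorem~\ref{Striposchar} applies: $\Sigma$ is a filtered opca $(A,A')$ equipped with a pseudo-$\cal D$-algebra structure $\bigvee :{\cal D}\Sigma\to\Sigma$ satisfying $(\ast )$. By Theorem~\ref{Vappmor} this $\bigvee$ is an applicative morphism of filtered opcas, and hence by Proposition~\ref{appl=fpp} a finite-meet-preserving morphism of BCOs. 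Since $\cal D$ is a KZ-monad, the algebra structure $\bigvee$ is left adjoint to the unit $\eta :\Sigma\to{\cal D}\Sigma$, giving an adjunction $\bigvee\dashv\eta$ in {\bf BCO}.

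Next I would transport this adjunction along the full 2-embedding $\Sigma\mapsto [-,\Sigma ]$. Being a full 2-embedding, it preserves adjunctions, so we obtain an adjunction $[-,\bigvee ]\dashv [-,\eta ]$ of Set-indexed preorders, with $[-,\bigvee ]:[-,{\cal D}\Sigma ]\to [-,\Sigma ]$ and $[-,\eta ]:[-,\Sigma ]\to [-,{\cal D}\Sigma ]$. Because $\bigvee$ preserves internal finite meets, $[-,\bigvee ]$ preserves indexed finite meets. Thus $[-,\bigvee ]$ is a finite-meet-preserving left adjoint, which is exactly the inverse-image part of a geometric morphism from the topos of $[-,\Sigma ]$ to the topos of $[-,{\cal D}\Sigma ]$, with direct image $[-,\eta ]$. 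That this adjunction genuinely comes from a geometric morphism is underwritten by Theorem~\ref{cdadjoint}(ii): $\bigvee$ is a map between $\cal D$-algebras possessing the right adjoint $\eta$, and is therefore dense.

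Finally I would verify that the geometric morphism is an inclusion, i.e.\ that the direct image $[-,\eta ]$ is full and faithful, equivalently that the counit $[-,\bigvee ]\circ [-,\eta ]\to{\rm id}$ is an isomorphism. The $\cal D$-algebra unit law supplies this: condition 4) of Definition~\ref{Vchar} gives $g_4(\bigvee ({\downarrow}a))\leq a$ and $h_4(a)\leq\bigvee ({\downarrow}a)$, that is, $\bigvee\circ\eta\cong{\rm id}_{\Sigma}$ in {\bf BCO}; applying the embedding yields $[-,\bigvee ]\circ [-,\eta ]=[-,\bigvee\circ\eta ]\cong{\rm id}_{[-,\Sigma ]}$, so the counit is an isomorphism and the inclusion follows. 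The main obstacle I anticipate is not computational but conceptual bookkeeping: correctly identifying the adjoint pair as the inverse-/direct-image pair of a genuine geometric morphism of the associated toposes and fixing its direction. The one piece of real content, namely that the left adjoint is left exact, is precisely the payoff of Theorem~\ref{Vappmor} together with Proposition~\ref{appl=fpp}.
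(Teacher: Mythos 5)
Your proposal is correct and follows essentially the same route as the paper: both invoke Theorem~\ref{Striposchar}, Theorem~\ref{Vappmor} and Proposition~\ref{appl=fpp} to see that $\bigvee$ preserves finite meets, and then read off the geometric inclusion from the adjunction $\bigvee\dashv{\downarrow}(-)$. Your additional verification that the counit is an isomorphism (via condition 4 of Definition~\ref{Vchar}) is a detail the paper leaves implicit, not a divergence in method.
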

\begin{proof} The assumption implies, by \ref{Striposchar}, that $\Sigma$ is a filtered opca, and, by \ref{Vappmor} and \ref{appl=fpp}, that the transformation of indexed preorders $[-,{\cal D}\Sigma ]\to [-,\Sigma ]$ induced by $\bigvee$, preserves finite meets. Hence the pair $\bigvee\dashv \dar (-)$ defines a geometric inclusion of $[-,\Sigma ]$ into $[-, {\cal D}\Sigma ]$.\end{proof}
\medskip

\noindent At this point we would like to relate our notion of filtered opcas satisfying the condition of \ref{Striposchar}, to the notion of {\em implicative oca\/} discussed in \cite{FerrerSantosW:ordcar}.
\begin{definition}[Ferrer Santos et al]\label{ioca}\em An {\em implicative ordered combinatory algebra\/} (ioca) is a filtered opca $(A,A')$ satisfying the following conditions:\begin{alist}
\item The application map is total.
\item The poset $(A,\leq )$ has infima of arbitrary subsets.
\item There is an operation called {\em implication}, $a,b\mapsto (a\Rightarrow b)$, order-reversing in the first argument and order-preserving in the second, such that for all $a,b,c\in A$ we have: $a\leq (b\Rightarrow c)$ if and only if $ab\leq c$.
\item There is an element $\cmb{e}\in A'$ such that for all $a,b,c\in A$: if $ab\leq c$ then $\cmb{e}a\leq (b\Rightarrow c)$.\end{alist}\end{definition}
Ferrer Santos et al prove the following result:
\begin{theorem}[Ferrer Santos et al, 5.8]\label{iocathm} If $(A,A')$ is an implicative oca, then $[-,A]$ is a tripos.\end{theorem}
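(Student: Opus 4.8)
The plan is to prove Theorem~\ref{iocathm} by showing that an implicative oca $(A,A')$ carries a pseudo $\cal D$-algebra structure $\bigvee$ satisfying condition $(\ast)$ of Theorem~\ref{Striposchar}; once this is done, that theorem delivers the tripos $[-,A]$ directly. The natural candidate for the suplattice-style join on a downset $\alpha\in{\cal D}A$ is forced by the adjunction in clause c) of Definition~\ref{ioca}: since implication is a right adjoint to application, application distributes over any joins that exist, so the ``right'' notion of $\bigvee\alpha$ should be the element that behaves like a supremum with respect to the application-implication adjunction. Concretely, using that $(A,\leq)$ has arbitrary infima (clause b), I would \emph{define} $\bigvee\alpha$ via infima of implications, setting something like $\bigvee\alpha = \bigwedge\{\,b\mid a\leq b \text{ for all } a\in\alpha\,\}$ — i.e.\ the genuine poset supremum, which exists because arbitrary infima give arbitrary suprema in a complete poset. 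The key point is that this supremum interacts well with application precisely because of the adjunction.

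First I would verify the four clauses of Definition~\ref{Vchar}. Clauses 1), 3) and 4) are essentially formal consequences of $\bigvee$ being an honest supremum operation on a complete poset: monotonicity (clause 1) is immediate, the associativity/union condition (clause 3) is the standard fact that $\bigvee\bigcup{\cal A}=\bigvee\{\bigvee\alpha\mid\alpha\in{\cal A}\}$ for suprema, and clause 4) reflects $\bigvee{\downarrow}a=a$. Here the combinators $u,g_3,h_3,g_4,h_4$ can be taken from the identity-like elements of $A'$ (for instance built from $\cmb{k},\cmb{s}$ and $\cmb{e}$), since the underlying inequalities hold strictly. The substantive clause is 2), which asks that application by a fixed $f$ commute with $\bigvee$ up to a realizer; this is exactly where I expect to invoke the adjunction c) together with the element $\cmb{e}$ of clause d): because $a\mapsto fa$ is a left adjoint (via $f\cdot(-) \dashv (f\Rightarrow -)$ when curried appropriately), it preserves suprema, and $\cmb{e}$ supplies the uniform realizer $g_2$ witnessing this at the level of $A'$.

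The main obstacle, and the heart of the proof, is establishing condition $(\ast)$: finding a single $v\in A'$ such that whenever $ab\leq c$ for all $a\in\alpha$, we get $v(\bigvee\alpha)b\leq c$. The hypothesis ``$ab\leq c$ for all $a\in\alpha$'' says, by the adjunction c), that $a\leq(b\Rightarrow c)$ for every $a\in\alpha$, hence $\bigvee\alpha\leq(b\Rightarrow c)$ since $\bigvee\alpha$ is the least upper bound. Applying the adjunction c) in the other direction gives $(\bigvee\alpha)b\leq c$ \emph{directly}, so the supremum already satisfies the bound \emph{without} any realizer $v$ — the order-theoretic content is immediate from the adjunction. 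The real work is therefore purely combinatorial: I must produce $v\in A'$ (not merely in $A$) witnessing $v(\bigvee\alpha)b\leq c$ uniformly, and I expect $v$ to be built from $\cmb{e}$ and the identity combinator, exploiting clause d) to keep the witness inside the filter $A'$. Verifying that this $v$ lies in $A'$ and discharges $(\ast)$ in full generality — tracking the interplay between the totality assumption a), the adjunction, and the filter-membership of $\cmb{e}$ — is the step that requires the most care.
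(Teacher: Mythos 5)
Your overall strategy---equip the ioca with a pseudo $\cal D$-algebra structure satisfying $(\ast)$ and invoke Theorem~\ref{Striposchar}---is the right one, and it is essentially how the paper proceeds (the paper cites Theorem~\ref{iocathm} from Ferrer Santos et al and instead proves the sharper Theorem~\ref{preimptripos}, whose ``if'' direction specializes to it). But your choice of $\bigvee\alpha$ as the genuine poset supremum, justified by the claim that $a\mapsto fa$ preserves suprema ``via $f\cdot(-)\dashv(f\Rightarrow-)$'', rests on a misreading of the adjunction: clause c) of Definition~\ref{ioca} says $a\leq(b\Rightarrow c)$ iff $ab\leq c$, so it is application in the \emph{first} argument, $a\mapsto ab$, that is a left adjoint (to $c\mapsto(b\Rightarrow c)$); nothing in the axioms makes $b\mapsto ab$ a left adjoint or sup-preserving, even up to realizers. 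This matters because clause 2) of Definition~\ref{Vchar}---the one clause you correctly single out as substantive---concerns exactly the maps $\phi_a:b\mapsto ab$ with $a\in A'$ that constitute ${\cal F}_\Sigma$ for a filtered opca, i.e.\ application in the \emph{second} argument. Every attempt to push $a(\sup\alpha)$ below $\sup\{ax\mid x\in\alpha\}$ using only c) and d) runs in a circle: it reduces to $\bigl(\bigwedge_{x\in\alpha}(x\Rightarrow s)\bigr)(\sup\alpha)\leq s$, which is the same second-argument continuity problem again. So clause 2) is not established, and with the honest supremum I do not see how to establish it. (Your treatment of $(\ast)$, by contrast, is fine: there the varying element sits in the first argument, the adjunction applies directly, and $v=\cmb{skk}$ already works since application is total and downward closed---ironically that is the easy part of your setup, not ``the heart of the proof''.)

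The repair, and what the paper's proof of Theorem~\ref{preimptripos} actually does, is to take the continuation-style join $\bigvee\alpha=\bigwedge_{b\in A}\bigl(\bigwedge_{a\in\alpha}(a\Rightarrow b)\Rightarrow b\bigr)$ instead of the order-theoretic supremum. That formula places the elements of $\alpha$ in antecedent position, which is precisely what makes the key property d) of that proof---if $fa\leq b$ for all $a\in\alpha$ then $Pf(\bigvee\alpha)\leq b$, with $P$ built from $\cmb{e},\cmb{e}',\cmb{i},\cmb{i}'$---hold for second-argument application; clause 2) then follows at once, and $(\ast)$ is recovered by flipping the arguments with $\langle x\rangle xb$. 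One direction of comparison does hold (property d) with $f=\cmb{skk}$ gives a realizer sending the double-dual element below $\sup\alpha$), but the converse is exactly the missing continuity, so unless you can prove that interderivability you should replace your definition of $\bigvee$ by the continuation formula and redo clause 2) accordingly.
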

However, the requirements for an ioca are too strong for the conclusion. We reformulate the notion so that we obtain an equivalence.
\begin{definition}\label{preiopca}\em A {\em pre-implicative opca\/} is a filtered opca $(A,A')$ satisfying the following conditions:\begin{rlist}
\item There is a map $\bigwedge :{\cal P}(A)\to A$ and there are constants $\cmb{i},\cmb{i}'\in A'$ such that for all $\alpha\subseteq A$:
$$\begin{array}{l}\text{for all }a\in\alpha ,\; \cmb{i}(\bigwedge\alpha )\leq a \\
\text{for all $b\in A$, if }b\leq a\text{ for all $a\in\alpha$, then }\cmb{i}'b\leq\bigwedge\alpha \end{array}$$
\item There is a binary implication $a,b\mapsto (a\Rightarrow b)$ on $A$ and there are constants $\cmb{e},\cmb{e}'\in A'$ satisfying, for all $a,b,c\in A$:
$$\begin{array}{l}\text{if $ab\leq c$ then }\cmb{e}a\leq (b\Rightarrow c) \\
\text{if $a\leq (b\Rightarrow c)$ then }\cmb{e}'ab\leq c \end{array}$$\end{rlist}
Note that in particular, the application on $A$ need not be total.\end{definition}
\begin{theorem}\label{preimptripos} Let $\Sigma$ be a BCO. Then $[-,\Sigma]$ is a tripos if and only if $\Sigma$ is a pre-implicative opca.\end{theorem}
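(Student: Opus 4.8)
The plan is to reduce the statement to Hofstra's characterisation theorem~\ref{Striposchar}. A pre-implicative opca is by definition a filtered opca, and every filtered opca is a BCO with internal finite meets (the pairing combinator lies in the filter); conversely, a tripos $[-,\Sigma]$ has indexed finite meets, so $\Sigma$ then has internal finite meets. Hence in both directions the hypothesis of theorem~\ref{Striposchar} is met automatically, and it suffices to show that a filtered opca $(A,A')$ admits a pseudo $\cal D$-algebra structure $\bigvee$ satisfying condition $(\ast )$ if and only if it can be equipped with the operations $\bigwedge$ and $\Rightarrow$ of definition~\ref{preiopca}.

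For the forward direction I would read off $\bigwedge$ and $\Rightarrow$ directly from $\bigvee$. For $\alpha\subseteq A$ set $\bigwedge\alpha=\bigvee(\dar\{\,b\mid b\le a\text{ for all }a\in\alpha\,\})$, the $\bigvee$ of the downset of lower bounds of $\alpha$; and for $b,c\in A$ set $(b\Rightarrow c)=\bigvee(\{\,d\mid db\le c\,\})$, the $\bigvee$ of the (downward closed) set of left multipliers carrying $b$ below $c$. Using the monotonicity witness $u$ from clause 1) and the unit witnesses $g_4,h_4$ from clause 4) of definition~\ref{Vchar}, one checks that $\cmb{i}=\langle x\rangle g_4(ux)$ and $\cmb{i}'=\langle x\rangle u(h_4x)$ validate clause i) of \ref{preiopca}, and that $\cmb{e}=\langle x\rangle u(h_4x)$ validates the first (currying) half of clause ii). For the second half, if $a\le(b\Rightarrow c)=\bigvee\{d\mid db\le c\}$ then, since $db\le c$ for every $d$ in that set, condition $(\ast )$ yields $v(\bigvee\{d\mid db\le c\})b\le c$ and hence $vab\le c$; so $\cmb{e}'=v$ works. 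These verifications are routine once the definitions are fixed.

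The backward direction is the substantial one, and the delicate point is the construction of $\bigvee$ and the verification of clause 2) of definition~\ref{Vchar}, compatibility of $\bigvee$ with the application maps $\phi_e:x\mapsto ex$. The naive supremum $\bigvee\alpha=\bigwedge\{\,c\mid a\le c\text{ for all }a\in\alpha\,\}$ satisfies clauses 1) and 4), but clause 2) would force left-multiplication $e(-)$ to preserve these suprema, whereas the implication of \ref{preiopca} only supplies a right adjoint to right-multiplication $(-)b$; so the naive candidate provably fails. Instead I would use the continuation- (double-negation-) style definition, writing $\alpha\Rightarrow c$ for $\bigwedge_{a\in\alpha}(a\Rightarrow c)$:
$$\bigvee\alpha\;=\;\bigwedge\bigl\{\,\bigl((\alpha\Rightarrow c)\Rightarrow c\bigr)\;\big|\;c\in A\,\bigr\}.$$
The virtue of this encoding is that a continuation expecting $ea$ can be pre-composed with $e$ to one expecting $a$: if $d\le(\dar\{ea\mid a\in\alpha\}\Rightarrow c)$ then $\cmb{B}de$ (with $\cmb{B}=\langle fgx\rangle f(gx)\in A'$) satisfies $(\cmb{B}de)a\le c$ for all $a\in\alpha$, hence lands below $\alpha\Rightarrow c$ after the combinator bookkeeping; feeding this to the instance $\bigvee\alpha\le((\alpha\Rightarrow c)\Rightarrow c)$ and applying modus ponens $\cmb{e}'$ delivers clause 2). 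Condition $(\ast )$ is obtained in the same spirit: if $ab\le c$ for all $a\in\alpha$ then the permuted combinator $\cmb{C}\cmb{I}b$ (with $\cmb{C}=\langle fxy\rangle fyx$ and $\cmb{I}$ the identity combinator) lands below $\alpha\Rightarrow c$, and feeding it to $\bigvee\alpha\le((\alpha\Rightarrow c)\Rightarrow c)$ produces a $v\in A'$ with $v(\bigvee\alpha)b\le c$. Clauses 1) and 4) reduce, via the inf- and implication-laws, to routine manipulations with $\cmb{I}$ and $\cmb{C}$. With $\bigvee$ in hand and clauses 1)--4) plus $(\ast )$ verified, theorem~\ref{Striposchar} concludes that $[-,A]$ is a tripos.

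I expect the real work to concentrate in two places. First is clause 2) above: the whole equivalence turns on the observation that the presence of implication forces left-multiplication to preserve the continuation-style suprema, and this is exactly why the naive supremum must be replaced by the double-negation one. Second is clause 3) of definition~\ref{Vchar}, the $\cal D^2$-associativity law, which is the most calculation-heavy step since it requires commuting a nested pair of continuations; but, like the rest, it is combinatory bookkeeping once the definition of $\bigvee$ is settled.
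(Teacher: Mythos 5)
Your proposal follows essentially the same route as the paper: in the forward direction you define $(b\Rightarrow c)$ as $\bigvee$ of the downset of realizers $\{d\mid db\le c\}$ and $\bigwedge\alpha$ as $\bigvee$ of the lower bounds, with the same witnesses ($\cmb{e}=\langle x\rangle u(h_4x)$, $\cmb{e}'=v$), and in the backward direction you use exactly the paper's continuation-style supremum $\bigvee\alpha=\bigwedge_{c}((\bigwedge_{a\in\alpha}(a\Rightarrow c))\Rightarrow c)$, verifying clause 2) and $(\ast)$ via the same ``apply a uniform realizer to $\bigvee\alpha$'' principle that the paper packages as its element $P$. The only blemish is the side remark that the naive supremum ``provably fails'' (it works, e.g., for locales); the argument itself is sound.
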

\begin{proof}First, suppose that $[-,\Sigma ]$ is a tripos. By Hofstra's theorem \ref{Striposchar}, we know that $\Sigma$ is a filtered opca $(A,A')$ which carries a pseudo $\cal D$-algebra structure $\bigvee :{\cal D}A\to A$, which satisfies condition $(\ast )$.

For $\alpha ,\beta\in {\cal D}A$ we define $I(\alpha ,\beta )$ as
$$I(\alpha ,\beta )\; =\; \{ a\in A\, |\, \text{for all $a'\in \alpha$, $aa'$ is defined and an element of $\beta$}\}$$
Clearly, $I(\alpha ,\beta )\in {\cal D}A$. Define the operation $\Rightarrow$ by
$$(b\Rightarrow c)\; =\; \bigvee I(\dar b,\dar c)$$
Now if $ab\leq c$ then clearly $a\in I(\dar b,\dar c)$, so $\dar a\subseteq I(\dar b,\dar c)$ so with $u$ as in 1) of \ref{Vchar} we have $u(\bigvee \dar a)\leq (b\Rightarrow c)$ and since with $h_4$ from 4) in \ref{Vchar} we have $h_4a\leq\bigvee\dar a$, we find 
$$u(h_4a)\leq (b\Rightarrow c)$$
So if $\cmb{e}$ is defined as $\langle x\rangle u(h_4x)$ then $\cmb{e}$ satisfies the first condition in ii) of definition~\ref{preiopca}.

For the second condition of \ref{preiopca} ii), we note that for $a\in I(b,c)$ we have $ab$ defined and $ab\leq c$; by $(\ast )$ we see that $v(b\Rightarrow c)b$ is defined and $\leq c$; so if $a\leq (b\Rightarrow c)$ then $vab\leq c$. Hence we can take $v$ as our $\cmb{e}'$, and we conclude that the operation $\Rightarrow$ and the constants $\cmb{e},\cmb{e}'$ satisfy \ref{preiopca} ii).

For the map $\bigwedge$, defined on arbitrary subsets $\alpha\subseteq A$, let $O(\alpha )$ be the set of lower bounds of $\alpha$ (then $O(\alpha )\in {\cal D}A$) and put 
$$\bigwedge \alpha\; =\; \bigvee O(\alpha )$$
If $a\in\alpha$ is arbitrary, then for all $b\in O(\alpha )$, $\cmb{skk}b\leq a$. So if $g_2$ is as in \ref{Vchar} 2) for $\cmb{skk}$, then $g_2(\bigwedge\alpha )\leq a$. Hence we can take $g_2$ as our $\cmb{i}$.

The second condition reads: for all $b\in O(\alpha )$, $\cmb{i}'b\leq\bigwedge\alpha =\bigvee O(\alpha )$. But we have a combinator $w\in A'$ such that whenever $\beta\in {\cal D}A$ and $a\in\beta$, $wa\leq\bigvee\beta$. So it is clear how to pick $\cmb{i}'$. We conclude that $(A,A')$ has the structure of a pre-implicative opca.
\medskip

\noindent Conversely, suppose $(A,A')$ is a filtered opca endowed with operations $\bigwedge$ and $\Rightarrow$ and elements $\cmb{i},\cmb{i}',\cmb{e},\cmb{e}'$ satisfying the conditions for a pre-implicative opca. For an indexed family $\{\Phi x\, |\, x\in X\}$ of elements of $A$, we shall also write $\bigwedge _{x\in X}\Phi x$ for $\bigwedge\{ \Phi x\, |\, x\in X\}$.

Define $\bigvee :{\cal D}A\to A$ by
$$\bigvee\alpha\; =\; \bigwedge_{b\in A}(\bigwedge_{a\in\alpha}(a\Rightarrow b)\Rightarrow b)$$
We prove that this map $\bigvee$ provides $(A,A')$ with a pseudo $\cal D$-algebra structure which satisfies condition $(\ast )$ of theorem~\ref{Striposchar}.

We define a number of elements of $A'$:
$$\begin{array}{rcl}\eta & = & \langle x\rangle\cmb{i}'(\cmb{i}x) \\
\xi & = & \langle x\rangle\cmb{e}(\cmb{e}'x) \\
H & = & \langle xy\rangle\cmb{e}'(\xi x)(\eta y) \\
K  & = & \langle x\rangle\cmb{i}'(\cmb{e}(H(\cmb{i}x))) \\
P & = & \langle uv\rangle\cmb{e}'(\cmb{i}v)(\cmb{i}'(\cmb{e}u)) \end{array}$$
And we note the following facts concerning these elements:\begin{alist}
\item For $\alpha\in {\cal D}A$, a family $\{\Phi _a\, |\, a\in\alpha\}$ and $\alpha '\subseteq\alpha$, we have 
$$\eta (\bigwedge_{a\in\alpha}\Phi _a)\leq\bigwedge_{a\in\alpha '}\Phi _a$$
\item For inequalities $b\leq b'$, $c\leq c'$ in $A$ we have
$$\begin{array}{rcl} \xi (b'\Rightarrow c) & \leq & (b\Rightarrow c) \\
\xi (b\Rightarrow c) & \leq & (b\Rightarrow c')\end{array}$$
\item For $\alpha\subseteq\alpha '$ in ${\cal D}A$ we have
$$K(\bigvee \alpha )\leq\bigvee\alpha '$$
\item If $f\in A$, $\alpha\in {\cal D}A$ and $fa\leq b$ for every $a\in\alpha$, then
$$Pf(\bigvee\alpha )\leq b$$\end{alist}
By way of example, we spell out the proof of c); the proof of the other statements is left to the reader. Assume $\alpha\subseteq\alpha '$. Then by a), $\eta (\bigwedge_{a\in\alpha '}(a\Rightarrow b))\leq\bigwedge_{a\in\alpha}(a\Rightarrow b)$, so by b), $\xi (\bigwedge_{a\in\alpha}(a\Rightarrow b)\Rightarrow b)\leq\eta (\bigwedge_{a\in\alpha '}(a\Rightarrow b))\Rightarrow b$, hence $\cmb{e}'(\xi (\bigwedge_{a\in\alpha}(a\Rightarrow b)\Rightarrow b))(\eta (\bigwedge_{a\in\alpha '}(a\Rightarrow b)))\leq b$. Therefore, by definition of $H$, $H(\bigwedge_{a\in\alpha}(a\Rightarrow b)\Rightarrow b)(\bigwedge_{a\in\alpha '}(a\Rightarrow b))\leq b$, hence $\cmb{e}(H(\bigwedge_{a\in\alpha }(a\Rightarrow b)\Rightarrow b))\leq\bigwedge_{a\in\alpha '}(a\Rightarrow b)\Rightarrow b$. Since $\cmb{i}(\bigvee\alpha )\leq\bigwedge_{a\in\alpha}(a\Rightarrow b)\Rightarrow b$ and application is downwards closed and order-preserving, we get $\cmb{e}(H(\cmb{i}(\bigvee\alpha)))\leq\bigwedge_{a\in\alpha '}(a\Rightarrow b)\Rightarrow b$, so $\cmb{i}'(\cmb{e}(H(\cmb{i}(\bigvee\alpha ))))\leq\bigvee\alpha '$, which is $K(\bigvee\alpha )\leq\bigvee\alpha '$ as desired.

Now to prove that $\bigvee$ is a pseudo $\cal D$-algebra map, requirement 1) of definition~\ref{Vchar} follows at once from property c). For requirement 4) we define the element
$$Q\; =\;\langle x\rangle\cmb{i}'(\cmb{e}(\langle uv\rangle\cmb{e}'(\cmb{i}v)u)x)$$
and we claim that whenever $a\in\alpha$, $Qa\leq\bigvee\alpha$. Indeed,
$$\begin{array}{c}\cmb{i}(\bigwedge_{a'\in\alpha}(a'\Rightarrow b))\leq (a\Rightarrow b)\text{, so }\cmb{e}'(\cmb{i}(\bigwedge_{a'\in\alpha}(a'\Rightarrow b)))a\leq b\text{, hence} \\
(\langle uv\rangle\cmb{e}'(\cmb{i}v)u)a(\bigwedge_{a'\in\alpha}(a'\Rightarrow b))\leq b\text{, i.e. } \\
\cmb{e}(\langle uv\rangle\cmb{e}'(\cmb{i}v)u)a\leq\bigwedge_{a'\in\alpha}(a'\Rightarrow b)\Rightarrow b\text{, hence} \\
\cmb{i}'(\cmb{e}(\langle uv\rangle\cmb{e}'(\cmb{i}v)u)a)\leq\bigvee\alpha\text{, which is}\\
Qa\leq\bigvee\alpha\end{array}$$
For the other inequality of \ref{Vchar} 4), we claim that for $R=\langle x\rangle\cmb{e}'(\cmb{i}x)(\cmb{i}'(\cmb{e}(\cmb{skk})))$ we have $R(\bigvee\dar a)\leq a$; the verification is easy.

For requirement 2) we use statement d). Suppose $fa$ is defined for all $a\in\alpha$. Then $Q(fa)\leq\bigvee\dar\{ fa\, |\, a\in\alpha\}$ for all $a\in\alpha$, hence $P(\langle x\rangle Q(fx))(\bigvee\alpha )\leq\bigvee\dar\{ fa\, |\, a\in\alpha\}$ as desired.

For requirement 3), let ${\cal A}\in {\cal D}^2A$, $\alpha\in {\cal A}$. Since $\alpha\subseteq\bigcup {\cal A}$ we have $K(\bigvee\alpha )\leq\bigvee (\bigcup {\cal A})$ by c). Hence $Kx\leq\bigvee (\bigcup {\cal A})$ for all $x\in\dar\{\bigvee\alpha\, |\,\alpha\in {\cal A}\}$, so $PK(\bigvee\dar\{\bigvee\alpha\, |\,\alpha\in {\cal A}\}\leq\bigvee (\bigcup {\cal A})$. The other inequality of 3) is realized by the element $P(\langle x\rangle Q(Qx))$: for $a\in\bigcup {\cal A}$, there is $\alpha\in {\cal A}$ such that $a\in\alpha$. Then $Qa\leq\bigvee\alpha$, so $Qa\in\dar\{\bigvee\alpha\, |\,\alpha \in {\cal A}\}$ whence $Q(Qa)\leq\bigvee\dar\{\bigvee\alpha\, |\,\alpha\in {\cal A}\}$. By d), we have $P(\langle x\rangle Q(Qx))(\bigvee\bigcup {\cal A})\leq\bigvee\dar\{\bigvee\alpha\, |\,\alpha\in {\cal A}\}$. We conclude that $\bigvee $ is a pseudo $\cal D$-algebra structure on $(A,A')$.
\medskip

\noindent It remains to show that the map $\bigvee$ satisfies condition $(\ast )$ of \ref{Striposchar}. This also readily follows from statement d) above. Suppose for all $a\in\alpha$, $ab$ is defined and $ab\leq c$. Then for all $a\in\alpha$, $(\langle x\rangle xb)a\leq c$, whence $P(\langle x\rangle xb)(\bigvee\alpha )\leq c$. Hence if $v=\langle uw\rangle P(\langle x\rangle xw)u$, then $v(\bigvee\alpha )b\leq c$ as required. And obviously, $v\in A'$.
\end{proof}
\medskip

\noindent We now turn to computationally dense maps between filtered opcas. The following definition is a direct translation of Hofstra's general notion of a dense map between BCOs (\ref{cdforBCOs}).
\begin{definition}\label{cdforfilopca}\em Suppose $f:(A,A')\to (B,B')$ is an applicative morphism of filtered opcas. Then $f$ is called {\em computationally dense\/} if there is an element $m\in B'$ with the following property:
$${\rm (cd)}\;\;\;\begin{array}{l} \text{For every $b'\in B'$ there is an $a'\in A'$ such that for all $a\in A$,} \\
\text{if $b'f(a)$ is defined then so is $a'a$, and $mf(a'a)\leq b'f(a)$} \end{array}$$
In Skolemized form, condition (cd) reads:
$${\rm (cd-sk)}\;\;\;\begin{array}{l} \text{There is a function $g:B'\to A'$ such that for all $b'\in B'$ and $a\in A$,} \\
\text{if $b'f(a)$ is defined then so is $g(b')a$, and $mf(g(b')a)\leq b'f(a)$}\end{array}$$\end{definition} 
Peter Johnstone, in \cite{JohnstonePT:geomrt}, has given a simplification of the definition of a computationally dense applicative morphism for pcas. A similar simplification can also be obtained here:
\begin{proposition}\label{ptjdense} Let $f:(A,A')\to (B,B')$ be an applicative morphism. Then $f$ is computationally dense if and only if there is a function $h:B'\to A'$ and an element $\cmb{t}\in B'$ such that for all $b'\in B'$, $$\cmb{t}f(h(b'))\leq b'$$\end{proposition}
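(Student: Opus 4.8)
The goal is to show the equivalence of Johnstone's simplified density condition with the full condition (cd) of Definition~\ref{cdforfilopca}. The plan is to prove the two implications separately, with the forward direction (from (cd) to Johnstone's condition) being essentially a specialization, and the backward direction requiring a combinatory construction that rebuilds the general Skolem function $g:B'\to A'$ from the simpler data $h:B'\to A'$ and $\cmb{t}\in B'$.

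**The easy direction.** First I would assume $f$ is computationally dense, so we have $m\in B'$ and (using (cd-sk)) a function $g:B'\to A'$ with the stated property. The idea is to extract Johnstone's data by a clever choice of the test element $b'$. I would try setting, for a given $b'\in B'$, a modified element $\hat{b'}\in B'$ built from $b'$ and the identity-like combinators so that instantiating (cd-sk) at $\hat{b'}$ and at a suitable $a$ yields the single inequality $\cmb{t}f(h(b'))\leq b'$. Concretely, I expect $h$ to be defined from $g$ composed with some fixed adjustment in $B'$, and $\cmb{t}$ to be obtained from $m$ together with the realizers $r,u$ coming from the applicative morphism structure (Definition~\ref{applfilt}) and the combinators $\cmb{k},\cmb{s}$ in $B'$. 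The point is that Johnstone's condition is the ``diagonal'' instance where one feeds $f(h(b'))$ back through $f$, so the defined-ness hypothesis ``if $b'f(a)$ is defined'' can be arranged to hold automatically by choosing the test input to be something like a pairing or a constant function.

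**The hard direction.** Conversely, assume we are given $h:B'\to A'$ and $\cmb{t}\in B'$ with $\cmb{t}f(h(b'))\leq b'$ for all $b'\in B'$. I must manufacture $m\in B'$ and a function $g:B'\to A'$ satisfying (cd-sk). The strategy is to define $g(b')$ by applying $h$ not to $b'$ itself but to a ``curried'' or ``delayed'' version of $b'$ that absorbs the future argument $a$; that is, given $b'\in B'$ and anticipating an input $a\in A$ with $b'f(a)$ defined, I want $g(b')a$ to compute, via $h$ applied to an element of $B'$ that encodes both $b'$ and the action of applying to $f(a)$, something whose image under $f$ and then $m$ dominates $b'f(a)$. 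The realizer $m$ will then be built from $\cmb{t}$ together with the applicative-morphism realizer $r$ (which lets us turn $f(xy)$ into $rf(x)f(y)$ and back) so that the inequality $\cmb{t}f(h(-))\leq(-)$ can be transported across the application.

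**Main obstacle.** The genuine difficulty is bookkeeping the interplay between the partiality of application in $(A,A')$ and the defined-ness clause in (cd-sk): I must ensure that $g(b')a$ is \emph{defined} exactly when $b'f(a)$ is, and this requires that the combinatorial term defining $g(b')$ be set up so that its evaluation on $a$ triggers precisely the same applications on the $B$-side. I expect the construction to route through the element $b''=\langle y\rangle r b' y$ or similar in $B'$ (so that $b''f(a)\leq b'f(a)$ reproduces the relevant application), feed $h(b'')$ as the $A$-side program, and then use that $f$ is an applicative morphism together with $\cmb{t}f(h(b''))\leq b''$ to close the loop. Verifying that all the intermediate applications are defined and that the final inequality $mf(g(b')a)\leq b'f(a)$ holds will be the bulk of the work, but each step is a routine combinatory manipulation once the right delayed element $b''$ is identified; pinning down that element is the crux.
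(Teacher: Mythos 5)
Your ``easy'' direction is essentially the paper's argument: the paper instantiates (cd-sk) at the constant function $\cmb{k}b'$ and at a fixed $a'\in A'$ (with $v\leq f(a')$), setting $h(b')=g(\cmb{k}b')$ and $\cmb{t}=\langle x\rangle m(rxv)$; your sketch of choosing the test element so that the defined-ness hypothesis holds automatically (``a constant function'') is exactly this, and it goes through.

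The hard direction, however, has a genuine gap, and it sits precisely at the point you yourself flag as the crux. You propose $g(b')=h(b'')$ for some modified $b''\in B'$ depending only on $b'$ (e.g.\ $b''=\langle y\rangle rb'y$). This cannot work, for two reasons. First, definedness: $h(b'')$ is just \emph{some} element of $A'$ about which you know only that $\cmb{t}f(h(b''))\leq b''$; nothing forces $h(b'')a$ to be defined when $b'f(a)$ is, and no choice of $b''$ can repair this, since $h$ is an arbitrary function satisfying the hypothesis. Second, even where $h(b'')a$ happens to be defined, the applicative-morphism inequality $rf(x)f(y)\leq f(xy)$ bounds $f(h(b'')a)$ from \emph{below} by $rf(h(b''))f(a)$, which is useless for establishing the required upper bound $mf(h(b'')a)\leq b'f(a)$. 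The paper's resolution is to never apply $h(b')$ to $a$ on the $A$-side at all: it sets $g(b')=\cmb{p}h(b')$, so that $g(b')a=\cmb{p}h(b')a$ is \emph{always} defined (pairing applied to two arguments is total in an opca), and the actual application is deferred to the $B$-side. There one decodes $f(\cmb{p}h(b')a)$ using $r$, $u$ and elements $\cmb{q}_i\leq f(\cmb{p}_i)$ into terms $Nx\leq f(h(b'))$ and $Mx\leq f(a)$, and then $\cmb{m}=\langle x\rangle\cmb{t}(Nx)(Mx)$ gives $\cmb{m}f(g(b')a)\leq\cmb{t}f(h(b'))f(a)\leq b'f(a)$; every inequality goes the right way because one only ever approximates $f(h(b'))$ and $f(a)$ from below before applying the monotone operation $\cmb{t}(-)(-)$. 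That pairing-and-decode step is the missing idea in your proposal.
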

\begin{proof} Suppose $f$ is applicative, with elements $r,u\in B'$ satisfying ii) and iii) of definition~\ref{applfilt}, respectively.

For the `only if' part, assume $g:B'\to A'$ and $m\in B'$ satisfy (cd-sk). Pick $a'\in A'$ arbitrary, and fix some $v\in B'$ with $v\leq f(a')$ (by i) of \ref{applfilt}). Define $h(b')=g(\cmb{k}b')$, then $h$ maps $B'$ into $A'$. Let $\cmb{t}=\langle x\rangle m(rxv)$, then $\cmb{t}\in B'$. 

Now for an arbitrary $b'\in B'$, we have $\cmb{k}b'f(a')$ defined, so by (cd-sk) we have $mf(g(\cmb{k}b')a')\leq\cmb{k}b'f(a')\leq b'$. In other words, $mf(h(b')a')\leq b'$. Since in particular $h(b')a'$ is defined, we have $rf(h(b'))f(a')$ defined and $rf(h(b'))f(a')\leq f(h(b')a')$, so $m(rf(h(b'))v)$ is defined and $m(rf(h(b'))v)\leq mf(h(b')a')\leq b'$. We see that $\cmb{t}f(h(b'))$ is defined and $\cmb{t}f(h(b'))\leq b'$, as desired.
\medskip

\noindent For the `if' part, assume $h:B'\to A'$ and $\cmb{t}\in B'$ satisfy $\cmb{t}f(h(b'))\leq b'$ for all $b'\in B'$. Let $\cmb{p},\cmb{p}_0,\cmb{p}_1$ be pairing and unpairing operators in $A'$. Choose $\cmb{q}_0,\cmb{q}_1\in B'$ with $\cmb{q}_i\leq f(\cmb{p}_i)$ (by \ref{applfilt} i)). Suppose $b'\in B'$, $b'f(a)$ defined. Then $\cmb{t}f(h(b'))f(a)\leq b'f(a)$. Since $\cmb{p}_0(\cmb{p}h(b')a)\leq h(b')$ we have
$$rf(\cmb{p}_0)f(\cmb{p}h(b')a)\leq f(\cmb{p}_0(\cmb{p}h(b')a))$$ and hence
$$u(r\cmb{q}_0f(\cmb{p}h(b')a))\leq uf(\cmb{p}_0(\cmb{p}h(b')a))\leq f(h(b'))$$
Let $N=\langle x\rangle u(r\cmb{q}_0x)$, so $Nf(\cmb{p}h(b')a)\leq f(h(b'))$. Then
$$\cmb{t}(Nf(\cmb{p}h(b')a))f(a)\leq \cmb{t}f(h(b'))f(a)\leq b'f(a)$$
Also, since $\cmb{p}_1(\cmb{p}h(b')a)\leq a$ we have, in a similar way,
$$u(r\cmb{q}_1f(\cmb{p}h(b')a))\leq uf(\cmb{p}_1(\cmb{p}h(b')a))\leq f(a)$$
Let $M=\langle x\rangle u(r\cmb{q}_1x)$. We see that for $\cmb{m}=\langle x\rangle\cmb{t}(Nx)(Mx)$, we have
$$\cmb{m}f(\cmb{p}h(b')a)\leq b'f(a)$$
So if we define $g(b')$ as $\cmb{p}h(b')$ then $\cmb{m}f(g(b')a)\leq b'f(a)$, as desired.\end{proof}
\medskip

\noindent Now suppose $\Sigma$ and $\Theta$ are BCOs such that $[-,\Sigma ]$ and $[-,\Theta ]$ are triposes. Then by \ref{Striposchar}, $\Sigma$ and $\Theta$ are filtered opcas which are also pseudi ${\cal D}$-algebras.

Every geometric morphism $[-,\Sigma ]\to [-,\Theta ]$ arises (by fullness of the embedding of BCO into the 2-category of Set-indexed preorders) from an adjoint pair of maps between $\Sigma$ and $\Theta$ which preserve internal finite meets; that is, by \ref{appl=fpp}, an adjoint pair of applicative morphisms. Since a map between $\cal D$-algebras is dense precisely when it has a right adjoint, we see that such geometric morphisms are uniquely determined by computationally dense applicative morphisms $\Theta\to\Sigma$.

\section{Krivine structures and triposes, and filtered opcas}
Thomas Streicher (\cite{StreicherT:kcrcp}) has reformulated Krivine's classical realizability (as presented in, e.g., \cite{KrivineJL:typlcc,KrivineJL:depcqc}) in a style reminiscent of combinatory logic, and therefore susceptible to an analysis with notions from the theory of pcas. He formulates the notion of an {\em abstract Krivine structure}. Out of an abstract Krivine structure one constructs a filtered opca $\Sigma$ (in fact, an implicative oca in the terminology of Ferrer Santos et al--see\ref{ioca}) such that the tripos $[-, \Sigma ]$ is Boolean. This provides a link between Krivine's interpretations of Set Theory and Topos Theory. It is an interesting question whether in the topos resulting from $[-,\Sigma ]$ one can build (using the ideas of algebraic set theory, for which see \cite{JoyalA:algst}) {\em internal models\/} which would faithfully reflect Krivine's interpretations; as was done, for example, in Hyland's effective topos, for the Friedman-McCarty realizability interpretation for IZF, in \cite{KouwenhovenC:algste}.

The first author discovered that, given a filtered opca $(A,A')$ and a nontrivial subterminal object in the relative realizability topos ${\sf RT}(A,A')$, one can construct an abstract Krivine structure (\cite{OostenJ:clar}). A similar idea appeared in Wouter Stekelenburg's PhD thesis (\cite{StekelenburgW:reac}). This section provides the details and also shows that, up to equivalence of the resulting toposes, {\em every abstract Krivine structure arises in this way}.

This means we have a pretty concrete way to present toposes arising out of abstract Krivine structures; but we still have to filter out the non-localic toposes. These are the ones of interest, as the set theory of Boolean localic toposes is basically forcing (see \cite{BellJL:sett} for an exposition). It turns out that Hofstra's condition \ref{localicchar} (which we shall compare with a criterion given by Krivine) gives rise to some recursion-theoretic calculations in our pet example: Kleene's second model of functions $\mathbb{N}\to\mathbb{N}$, with the total recursive functions as filter.
\begin{definition}[Streicher]\label{aksdef}\em An {\em abstract Krivine structure\/} (aks) consists of the following data:\begin{rlist}
\item A set $\Lambda$ of {\em terms}, together with a binary operation $t,s\mapsto t{\cdot}s: \Lambda\times\Lambda\to\Lambda$, and distinguished elements $\cmb{K},\cmb{S},\cc$.
\item A subset {\sf QP} of $\Lambda$ (the set of {\em quasi-proofs}), which contains $\cmb{K},\cmb{S}$ and $\cc$, and is closed under the binary operation of i).
\item A set $\Pi$ of {\em stacks\/} together with a `push' operation
$$t,\pi\mapsto t.\pi: \Lambda\times \Pi\to\Pi$$
(when we iterate this operation, we associate to the right, and write $t.s.\pi$ for $t.(s.\pi )$), as well as an operation
$$\pi\mapsto k_{\pi}: \Pi\to\Lambda$$
\item A subset $\pole$ (the {\em pole\/}) of $\Lambda\times\Pi$, which satisfies the following requirements:\begin{itemize}
\item[(S1)] If $(t,s.\pi )\in\pole$ then $(t{\cdot}s,\pi )\in\pole$
\item[(S2)] If $(t,\pi )\in\pole$ then $(\cmb{K},t.s.\pi )\in\pole$ (for any term $s$)
\item[(S3)] If $((t{\cdot}u){\cdot}(s{\cdot}u),\pi )\in\pole$ then $(\cmb{S},t.s.u.\pi )\in\pole$
\item[(S4)] If $(t,k_{\pi}.\pi )\in\pole$ then $(\cc ,t.\pi )\in\pole $
\item[(S5)] If $(t,\pi )\in\pole$ then $(k_{\pi},t.\pi ')\in\pole$ (for any $\pi '$)\end{itemize}\end{rlist}\end{definition}
Given a set $U$ of terms and a set $\alpha$ of stacks, we define
$$\begin{array}{rcl} U ^{\pole} & = & \{\pi\in\Pi\, |\,\text{for all $t\in U$, }(t,\pi )\in\pole\} \\
\alpha ^{\pole} & = & \{ t\in\Lambda\, |\,\text{for all $\pi\in\alpha$, }(t,\pi )\in\pole\} \end{array}$$
Clearly, we have closure operators $(-)^{\pole\pole}$ on both ${\cal P}(\Lambda )$ and ${\cal P}(\Pi )$. For $\alpha\subseteq\Pi$, we also write $|\alpha |$ for $\alpha ^{\pole}$.

Let ${\cal P}_{\pole}(\Pi )$ be $\{\beta\subseteq\Pi\, |\,\beta ^{\pole\pole}=\beta\}$, ordered by {\em reverse\/} inclusion. We define an application $\bullet$ on ${\cal P}_{\pole}(\Pi )$ by putting
$$\alpha {\bullet}\beta\; =\; \{\pi\in \Pi\, |\,\text{for all $t\in |\alpha |$ and $s\in |\beta |$, }(t,s.\pi )\in\pole\} ^{\pole\pole}$$
Moreover, let $\Phi\subseteq {\cal P}_{\pole}(\Pi )$ be the set
$$\Phi\; =\; \{\alpha\in {\cal P}_{\pole}(\Pi )\, |\, |\alpha |\cap {\sf QP}\neq\emptyset\}$$
\begin{theorem}[Streicher]\label{aks=>opca} The set ${\cal P}_{\pole}(\Pi )$ forms, together with the given application, a total order-ca, and $\Phi$ is a filter in it. The Set-indexed preorder $[-,{\cal P}_{\pole}(\Pi )]$ is a Boolean tripos.\end{theorem}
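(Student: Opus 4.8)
The plan is to verify, step by step, that $({\cal P}_{\pole}(\Pi ),{\bullet})$ satisfies the axioms of an order-pca (definition~\ref{opca}), then that $\Phi$ is a filter (definition~\ref{filteropca}), and finally that the resulting tripos $[-,{\cal P}_{\pole}(\Pi )]$ is Boolean. The key technical device throughout will be the Galois connection between ${\cal P}(\Lambda )$ and ${\cal P}(\Pi )$ induced by the pole: one passes freely between a closed set $\alpha\in {\cal P}_{\pole}(\Pi )$ of stacks and its ``realizer set'' $|\alpha |=\alpha ^{\pole}\subseteq\Lambda$, using the standard facts that $(-)^{\pole}$ is order-reversing, that $(-)^{\pole\pole}$ is a closure, and that $\alpha ^{\pole\pole\pole}=\alpha ^{\pole}$. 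Since ${\cal P}_{\pole}(\Pi )$ is ordered by \emph{reverse} inclusion, $\alpha\leq\beta$ means $\alpha\supseteq\beta$, equivalently $|\alpha |\subseteq |\beta |$; this sign-flip must be tracked carefully, as it is the point where errors creep in.

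First I would unwind the definition of $\alpha{\bullet}\beta$ in terms of realizers: the point is that $t\in |\alpha{\bullet}\beta |$ should correspond, via (S1), to $t$ behaving like a function sending realizers of $\alpha$ (when pushed) to realizers of $\beta$. Concretely, if $t\in |\alpha |$ and $s\in |\beta |$ then (S1) gives $t{\cdot}s\in |\alpha{\bullet}\beta |$, which is the basic ``application = combinatory application'' fact. Monotonicity (requirement i of \ref{opca}) is then immediate from the order-reversing behaviour of $(-)^{\pole}$ together with the monotonicity of $\bullet$ in each argument. For the combinators, I would define $\cmb{k}$ as the closure $\{\cmb{K}\}^{\pole\pole}$ and $\cmb{s}$ as $\{\cmb{S}\}^{\pole\pole}$ (strictly, the elements of ${\cal P}_{\pole}(\Pi )$ whose realizer sets contain $\cmb{K}$, resp.\ $\cmb{S}$), and then verify the inequalities $\cmb{k}xy\leq x$ and $\cmb{s}xyz\leq (xz)(yz)$ by chasing (S2) and (S3) respectively, reading each push-rule as the statement that $\cmb{K}$ (resp.\ $\cmb{S}$) realizes the corresponding closed set. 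Totality of $\bullet$ is automatic since $\alpha{\bullet}\beta$ is defined as a $(-)^{\pole\pole}$-closed set for \emph{all} $\alpha,\beta$, so there is nothing to check there beyond confirming it always lands in ${\cal P}_{\pole}(\Pi )$.

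That $\Phi$ is a filter amounts to checking it is closed under application and contains choices for $\cmb{k}$ and $\cmb{s}$. Containment is clear because $\cmb{K},\cmb{S}\in {\sf QP}$ by definition of an aks, so $|\cmb{k}|$ and $|\cmb{s}|$ meet ${\sf QP}$. Closure under application uses that ${\sf QP}$ is closed under the binary operation $\cdot$ of the aks: if $t\in |\alpha |\cap {\sf QP}$ and $s\in |\beta |\cap {\sf QP}$, then $t{\cdot}s\in |\alpha{\bullet}\beta |\cap {\sf QP}$. The main obstacle, and the part I would allot the most care to, is the final claim that the tripos is \emph{Boolean}. Here the combinator $\cc$ and rules (S4)--(S5) are decisive: they encode a call/cc-style control operator, which classically corresponds to Peirce's law / double-negation elimination. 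I expect the argument to show that in $[-,{\cal P}_{\pole}(\Pi )]$ the canonical map $\phi\to\lnot\lnot\phi$ has an inverse up to the tripos preorder, witnessed by a uniform realizer built from $\cc$ and the stack-constants $k_{\pi}$; concretely one shows that for the implication $\Rightarrow$ induced (as in the proof of theorem~\ref{preimptripos}) by this opca, $\cc$ realizes $((\phi\Rightarrow\bot )\Rightarrow\phi )\Rightarrow\phi$ for a suitable choice of $\bot$, so that every truth-value is $\lnot\lnot$-stable. Establishing this cleanly is the crux; the preceding order-pca verifications, while numerous, are routine manipulations of the Galois connection.
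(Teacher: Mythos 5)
First, a point of reference: the paper does not prove this theorem at all --- it is imported verbatim from Streicher \cite{StreicherT:kcrcp}, so there is no in-paper proof to compare your attempt against. Judged on its own terms, your outline of the first half (the total order-ca structure and the filter) is essentially right: the Galois connection, the observation that (S1) makes $t{\cdot}s\in |\alpha{\bullet}\beta |$ for $t\in |\alpha |$, $s\in |\beta |$, the choice $\cmb{k}=\{\cmb{K}\}^{\pole}$, $\cmb{s}=\{\cmb{S}\}^{\pole}$ (note: $\{\cmb{K}\}^{\pole}$, a set of stacks, not $\{\cmb{K}\}^{\pole\pole}$, which is a set of terms --- your parenthetical hedge suggests you sensed this), and the verification of the combinator inequalities by chasing (S2)/(S3) through the closure operator. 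Closure of $\Phi$ under $\bullet$ via closure of ${\sf QP}$ under ${\cdot}$ is also fine.

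The genuine gap is in the last step, and it is not where you think it is. You treat ``$[-,{\cal P}_{\pole}(\Pi )]$ is a tripos'' as automatic once the filtered opca structure is in place, and locate all the difficulty in Booleanness. But by the paper's own theorems~\ref{DStriposchar} and~\ref{Striposchar}, a filtered opca $(A,A')$ only guarantees that $[-,{\cal D}A]$ is a tripos; for $[-,A]$ itself to be a tripos one must additionally exhibit a pseudo-$\cal D$-algebra structure satisfying $(\ast )$, equivalently (theorem~\ref{preimptripos}) the pre-implicative structure: arbitrary infima (here $\bigwedge _i\alpha _i=(\bigcup _i\alpha _i)^{\pole\pole}$, the largest lower bound under reverse inclusion) and an implication (here $\alpha\Rightarrow\beta =\{ t.\pi\, |\, t\in |\alpha |,\pi\in\beta\} ^{\pole\pole}$) together with the adjunction combinators $\cmb{e},\cmb{e}'$ built from $\cmb{K}$ and $\cmb{S}$. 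This is the substantive half of Streicher's theorem and your proposal omits it entirely. Worse, your appeal to ``the implication $\Rightarrow$ induced as in the proof of theorem~\ref{preimptripos}'' is circular: that proof \emph{constructs} $\Rightarrow$ from the hypothesis that $[-,\Sigma ]$ is already a tripos, which is exactly what you still owe. Once the implicative structure is in place, your final step is correct in spirit: $\cc$ with (S4)--(S5) uniformly realizes Peirce's law $((\alpha\Rightarrow\beta )\Rightarrow\alpha )\Rightarrow\alpha$, which together with \emph{ex falso} (available in any tripos) yields double-negation elimination and hence Booleanness.
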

Ferrer Santos et al (\cite{FerrerSantosW:ordcar}) observe that, in fact, the order-ca ${\cal P}_{\pole}(\Pi )$ is an implicative order-ca (see definition~\ref{ioca}), with implication defined by
$$\alpha\Rightarrow\beta\; =\;\{ t.\pi\, |\, t\in |\alpha |,\pi\in\beta\}^{\pole\pole}$$
and that the element $\{\cc\} ^{\pole}$ realizes `Pierce's Law':
$$\{\cc\} ^{\pole}\leq ((\alpha\Rightarrow\beta )\Rightarrow\alpha )\Rightarrow\alpha$$
Consequently, the define a {\em Krivine order-ca\/} as an implicative order-ca with a distinguished element in the filter, which realizes Pierce's Law.

They give a recipe for constructing, from each Krivine order-ca $\cal A$, an abstract Krivine structure $K_{\cal A}$. And it turns out that the tripos constructed from $K_{\cal A}$ in Streicher's way, is equivalent to the tripos $[-,{\cal A}]$ (theorem 5.15 in \cite{FerrerSantosW:ordcar}). We call such triposes {\em Krivine triposes}.

We follow a different approach, which in our view leads to a simpler representation of Krivine triposes. Let us recall (see \cite{OostenJ:reaics} for details) that in any opca one has a representation of the natural numbers $\{\bar{\cmb{n}}\, |\, n\in\mathbb{N}\}$; since $\bar{\cmb{n}}$ is $\cmb{k}\cmb{s}$-definable, it will be in any filter. Moreover, we have a {\em coding of sequences\/} $[a_0,\ldots ,a_{n-1}]$ (which, again, is $\cmb{k},\cmb{s}$-definable so in the filter whenever $a_0,\ldots ,a_{n-1}$ are). Let us summarize the properties we need in the following lemma:
\begin{lemma}\label{sequencemanage} Let $(A,A')$ be a filtered opca. Then for a standard coding of natural numbers and sequences from $A$, there are elements $\cmb{b},\cmb{c},\cmb{d},\cmb{t}\in A'$ which satisfy:\begin{rlist}
\item For all $n\in\mathbb{N}$ and $k\geq n$, $\cmb{b}\bar{\cmb{n}}[a_0,\ldots ,a_k]\leq a_n$
\item For all $n\in\mathbb{N}$ and $k\geq n$, $\cmb{c}\bar{\cmb{n}}[a_0,\ldots ,a_k]\leq [a_n,\ldots ,a_k]$
\item For all $a\in A$, $\cmb{d}a[a_0,\ldots ,a_{n-1}]\leq [a,a_0,\ldots ,a_{n-1}]$
\item For all $a\in A$, $\cmb{t}a\leq [a]$\end{rlist}\end{lemma}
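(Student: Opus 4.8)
The plan is to realize all four operations by explicit combinatory terms built from pairing combinators and a single iteration combinator on the numerals, every one of which is $\cmb{k},\cmb{s}$-definable and hence, by the remark preceding the lemma, automatically lies in the filter $A'$. First I would fix the coding: take a fixed $\cmb{k},\cmb{s}$-definable element $\cmb{nil}$ and code sequences as right-nested pairs, so that $[a_0,\ldots ,a_{n-1}]$ is $\cmb{p}a_0(\cmb{p}a_1(\cdots (\cmb{p}a_{n-1}\cmb{nil})\cdots ))$. With this choice, consing an element onto a sequence is exactly pairing, taking the head is applying $\cmb{p}_0$, and taking the tail is applying $\cmb{p}_1$; I prefer this coding over a length-tagged one precisely because it makes $\cmb{c}$ a pure iteration of the tail map, with no need to adjust a length tag by iterated predecessor. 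Since application in an opca is order-preserving and has downwards-closed domain, the basic reductions $\cmb{p}_0(\cmb{p}ab)\leq a$ and $\cmb{p}_1(\cmb{p}ab)\leq b$ propagate through composite terms as inequalities, which is all the following estimates will use.

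The one genuinely external ingredient is an iteration combinator $\cmb{it}\in A'$ satisfying the recursion inequalities $\cmb{it}\,\bar{\cmb{0}}\,f\,x\leq x$ and $\cmb{it}\,\overline{\cmb{n}{+}\cmb{1}}\,f\,x\leq f(\cmb{it}\,\bar{\cmb{n}}\,f\,x)$, so that $\cmb{it}\,\bar{\cmb{n}}\,f\,x\leq f^{\,n}(x)$, the $n$-fold application of $f$ to $x$. This is the standard recursion-on-numerals available in any opca, which I would quote from the opca literature (\cite{OostenJ:reaics}) rather than reprove; being $\cmb{k},\cmb{s}$-definable it belongs to $A'$. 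Given $\cmb{it}$, I would set $\cmb{t}=\langle a\rangle \cmb{p}a\,\cmb{nil}$, $\cmb{d}=\langle as\rangle \cmb{p}as$, $\cmb{c}=\langle ns\rangle \cmb{it}\,n\,\cmb{p}_1\,s$, and $\cmb{b}=\langle ns\rangle \cmb{p}_0(\cmb{c}\,n\,s)$. Each of these is patently built from elements of $A'$ by application and $\langle-\rangle$-abstraction, so each lies in $A'$ as required.

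For the verification, iv) and iii) are immediate: $\cmb{t}a\leq \cmb{p}a\,\cmb{nil}=[a]$ and $\cmb{d}a[a_0,\ldots ,a_{n-1}]\leq \cmb{p}a[a_0,\ldots ,a_{n-1}]=[a,a_0,\ldots ,a_{n-1}]$ directly from the coding. For ii) I would prove by induction on $n$ that $\cmb{it}\,\bar{\cmb{n}}\,\cmb{p}_1\,[a_0,\ldots ,a_k]\leq [a_n,\ldots ,a_k]$ whenever $k\geq n$: the base case $n=0$ is the first recursion inequality, and in the step one applies the second recursion inequality to get $\cmb{it}\,\overline{\cmb{n}{+}\cmb{1}}\,\cmb{p}_1\,s\leq \cmb{p}_1(\cmb{it}\,\bar{\cmb{n}}\,\cmb{p}_1\,s)$ and then uses order-preservation of application together with the inductive hypothesis and $\cmb{p}_1[a_n,\ldots ,a_k]\leq [a_{n+1},\ldots ,a_k]$. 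Statement i) then follows by composing with $\cmb{p}_0$, since $\cmb{p}_0[a_n,\ldots ,a_k]\leq a_n$. I expect the main obstacle to be bookkeeping rather than conceptual: one must track that partiality never blocks a reduction (guaranteed by the domains being downwards closed) and that each inequality obtained from a reduction survives being placed inside the surrounding combinatory context (guaranteed by order-preservation of application). The only place where care is genuinely needed is the interface with the iterator, namely invoking its recursion inequalities up to $\leq$ rather than equality; everything else is routine calculation in the opca.
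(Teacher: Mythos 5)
Your construction is correct: the cons-list coding via pairing, the iterated tail map for $\cmb{c}$, and the head-of-tail composition for $\cmb{b}$ all go through, and you rightly flag the only delicate points (that the iterator's defining equations hold only up to $\leq$ in an opca, and that definedness is preserved because domains are downwards closed and each tail application lands on a genuine pair when $k\geq n$). The paper itself gives no proof of this lemma --- it states it as a summary of the standard coding of numerals and sequences, deferring to \cite{OostenJ:reaics} --- and what you have written is essentially the standard argument being alluded to there, so no further comparison is needed.
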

We can now define an aks out of a filtered opca $(A,A')$ together with a downwards closed subset $U\subseteq A$ which does not meet the filter: $U\cap A'=\emptyset$.
\begin{definition}\label{aksconstrdef}\em Given $(A,A')$ and $U$ as above, we define an aks $K(A,A',U)$ as follows:
\begin{arlist}\item $\Lambda =A$, ${\sf QP}=A'$, $\Pi$ is the set of coded sequences $[a_0,\ldots ,a_{n-1}]$ of $A$.
\item The push operation $\Lambda\times\Pi\to\Pi$ sends $a,\pi$ to $\cmb{d}a\pi$ where $\cmb{d}$ is as in \ref{sequencemanage} iii). We write $a.\pi$ for this.
\item The total binary operation $\Lambda\times\Lambda\to\Lambda$ sends $a,b$ to $\langle\pi\rangle a(b.\pi )$. We write $a{\cdot}b$ for this. Note, that the operation $a,b\mapsto a{\cdot}b$ is total and should not be confounded with the partial operation on $A$ which forms the opca structure; the latter is written $a,b\mapsto ab$, as we have been doing all along.
\item Using the elements $\cmb{b}$ and $\cmb{c}$ from \ref{sequencemanage} i),ii), and writing $\pi _i$ for $\cmb{b}\bar{\cmb{i}}\pi$ and $\pi _{\geq j}$ for $\cmb{c}\bar{\cmb{j}}\pi$, we put:
$$\begin{array}{rcl} \cmb{K} & = & \langle\pi\rangle\pi _0\pi _{\geq 2} \\
\cmb{S} & = & \langle\pi\rangle ((\pi _0{\cdot}\pi _2){\cdot}(\pi _1{\cdot}\pi _2))\pi _{\geq 3} \\
k_{\pi} & = & \langle\rho\rangle\rho _0\pi \\
\cc & = & \langle\pi\rangle\pi _0(k_{\pi _{\geq 1}}.\pi _{\geq 1}) \end{array}$$
\item Finally, the pole $\pole$ is defined by
$$\pole\; =\; \{ (t,\pi )\, |\, t\pi\text{ is defined and }t\pi\in U\}$$
\end{arlist}\end{definition}
\begin{theorem}\label{aksconstrthm} The structure defined in \ref{aksconstrdef} is indeed an abstract Krivine structure.\end{theorem}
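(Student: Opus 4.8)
The plan is to verify, in turn, the three groups of requirements in Definition~\ref{aksdef}. The sets $\Lambda=A$ and $\Pi$ and the operations $\cdot$, push and $k_{(-)}$ are well defined directly from \ref{aksconstrdef} and Lemma~\ref{sequencemanage}, so the work is to show (a) that ${\sf QP}=A'$ contains $\cmb{K},\cmb{S},\cc$ and is closed under $\cdot$, and (b) that the pole $\pole$ satisfies (S1)--(S5). The entire verification runs on a single mechanism. Each of $\cmb{K},\cmb{S},\cc,k_{\pi}$ is a $\lambda$-abstraction $\langle\rho\rangle(\cdots)$, so applying it to a stack $\rho$ beta-reduces, with the opca inequality $\leq$, to its body with $\rho$ substituted; reading the pushed stack $t_0.\cdots.t_{m-1}.\pi$ as the coded sequence with successive entries $t_0,\dots,t_{m-1}$ followed by $\pi$, Lemma~\ref{sequencemanage} i),ii) give the component inequalities $\rho_i\leq t_i$ and $\rho_{\geq m}\leq\pi$; and since $U$ is downwards closed, anything below a member of $U$ is again in $U$. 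So each clause reduces to exhibiting one $\leq$ into a term already known to be in $U$.

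For the quasi-proofs: as a filter, $A'$ contains $\cmb{k},\cmb{s}$ and is closed under the application of $A$, hence under $\lambda$-abstraction. The numerals $\bar{\cmb{n}}$ are $\cmb{k},\cmb{s}$-definable and the elements $\cmb{b},\cmb{c},\cmb{d}$ of Lemma~\ref{sequencemanage} lie in $A'$; as $\cmb{K},\cmb{S},\cc$ are built from these by $\lambda$-abstraction, they lie in $A'$. Similarly $a\cdot b=\langle\pi\rangle a(\cmb{d}b\pi)$ is $\lambda$-definable from $a,b,\cmb{d}$, so $A'$ is closed under $\cdot$; note too that $\cdot$ is total, since a $\lambda$-abstraction always denotes.

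For the pole, recall $\pole=\{(t,\pi)\mid t\pi\ \text{defined and}\ t\pi\in U\}$ with $t\pi$ the partial opca application. Two facts are used throughout: opca beta-reduction is an inequality that propagates definedness downwards (if the substituted body is defined, the redex is defined and $\leq$ it), and all the derived operations---application, $\cdot$, push and $\pi\mapsto k_{\pi}$---are monotone, being assembled from application and constants by bracket abstraction; together with opca axiom i) this lets one descend into $U$ componentwise. The clauses then read: (S1) $(t\cdot s)\pi\leq t(s.\pi)$; (S5) $k_{\pi}(t.\pi')\leq(t.\pi')_0\,\pi\leq t\pi$; (S2) $\cmb{K}(t.s.\pi)\leq(t.s.\pi)_0\,(t.s.\pi)_{\geq 2}\leq t\pi$; (S4) $\cc(t.\pi)\leq(t.\pi)_0\,\bigl(k_{(t.\pi)_{\geq 1}}.(t.\pi)_{\geq 1}\bigr)\leq t(k_{\pi}.\pi)$, using monotonicity of $k_{(-)}$ and of push; (S3) $\cmb{S}(t.s.u.\pi)\leq\bigl((t\cdot u)\cdot(s\cdot u)\bigr)\pi$, using monotonicity of $\cdot$. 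In every case the displayed right-hand side lies in $U$ exactly by the hypothesis of that clause, so downward closure of $U$ yields the conclusion.

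I expect the difficulty to be bookkeeping rather than conceptual. Because opca beta-reduction yields $\leq$ and not $=$, every inequality must be oriented so as to descend into $U$; this is why downward closure of $U$ is the load-bearing hypothesis, and why the hypothesis $U\cap A'=\emptyset$ plays no role here (it matters only later, for the nontriviality of the induced tripos). The two points deserving an explicit line are the monotonicity of $\cdot$ and of $\pi\mapsto k_{\pi}$ needed in (S3) and (S4)---which follow from the general fact that an opca term is order-preserving in its free variables---and the identification of the pushed stack with its coded sequence so that Lemma~\ref{sequencemanage} i),ii) deliver the component inequalities $\rho_i\leq t_i$ and $\rho_{\geq m}\leq\pi$ used above.
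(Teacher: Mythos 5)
Your proposal is correct and follows essentially the same route as the paper: each of (S1)--(S5) is verified by exhibiting a single opca inequality into a term already in $U$ (e.g.\ $(t{\cdot}s)\pi\leq t(s.\pi)$, $\cmb{K}(t.s.\pi)\leq t\pi$, etc.) and invoking downward closure of $U$. Your additional remarks on the closure of ${\sf QP}=A'$, the monotonicity of the derived operations, and the irrelevance of $U\cap A'=\emptyset$ at this stage are all accurate elaborations of points the paper leaves implicit.
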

\begin{proof} We have to check that the pole satisfies properties (S1)--(S5) from definition~\ref{aksdef}.

For (S1), suppose $(t,s.\pi )\in\pole$, so $t(s.\pi )\in U$. Then $(t{\cdot}s)\pi\in U$ since $(t{\cdot}s)\pi\leq t(s.\pi )$; hence $(t{\cdot}s,\pi )\in\pole$.

For (S2), suppose $(t,\pi )\in\pole$ so $t\pi\in U$. Note that $(t.s.\pi )_0\leq t$ and $(t.s.\pi )_{\geq 2}\leq\pi$, hence
$$\cmb{K}(t.s.\pi )\leq (t.s.\pi )_0((t.s.\pi )_{\geq 2})\leq t\pi$$
so $\cmb{K}(t.s.\pi )\in U$ and therefore $(\cmb{K},t.s.\pi )\in\pole$.

For (S3), suppose $((t{\cdot}u){\cdot}(s{\cdot}u),\pi )\in\pole$, so $((t{\cdot}u){\cdot}(s{\cdot}u))\pi\in U$. Now 
$$\cmb{S}(t.s.u.\pi )\leq ((t{\cdot}u){\cdot}(s{\cdot}u))\pi$$
so $\cmb{S}(t.s.u.\pi )\in U$, hence $(\cmb{S},t.s.u.\pi )\in\pole$.

For (S4), suppose $(t,k_{\pi}.\pi )\in\pole$ so $t(k_{\pi}.\pi )\in U$. Then $\cc (t.\pi )\in U$ since $\cc (t.\pi )\leq t(k_{\pi}.\pi )$. Therefore $(\cc ,t.\pi )\in\pole$.

For (S5), suppose $(t,\pi )\in \pole$ so $t\pi\in U$. We have $k_{\pi}(t.\pi ')\leq t\pi$; hence $k_{\pi}(t.\pi ')\in U$, so $(k_{\pi},t.\pi ')\in\pole$.\end{proof}
\medskip

\noindent Let us denote the aks constructed from $A,A', U$ by ${\cal K}^U_{A,A'}$ and let us call the filtered opca constructed from ${\cal K}^U_{A,A'}$ by Streicher's construction, ${\cal P}(\Pi )^U_{A,A'}$. We wish to compare the tripos $[-,{\cal P}(\Pi )^U_{A,A'}]$ to the tripos $[-,{\cal D}(A,A')]$. First we recall a standard topos-theoretic construction.

For a subset $\alpha$ of $A$ we write $\alpha\to U$ for the set $$\{ a\in A\, |\, \text{for all $b\in\alpha$, $ab$ is defined and $ab\in U$}\}$$ Note that since $U\in {\cal D}A$, $(\alpha\to U)\in {\cal D}A$. For $\phi :I\to {\cal D}A$ we write $\phi\to U$ for the function taking $i\in I$ to $\phi (i)\to U$.
\begin{definition}\label{UBooleanization}\em The {\em Booleanization of the tripos\/} $[-,{\cal D}(A,A')]$ {\em with respect to $U$\/} is the Boolean subtripos of $[-,{\cal D}(A,A')]$ which can be defined in any of the following three equivalent ways:\begin{arlist}
\item For any set $I$, we have the set of functions $\phi :I\to {\cal D}A$ which are isomorphic in $[-,{\cal D}(A,A')]$ to $(\phi\to U)\to U$ (as sub-preorder of $[I,{\cal D}(A,A')]$);
\item For any set $I$, all functions $\phi :I\to {\cal D}A$ but ordered by: $\phi\leq\psi$ if and only if $\phi\leq ((\psi\to U)\to U)$ in $[I,{\cal D}(A,A')]$;
\item For any set $I$, all functions $\phi :I\to {\cal D}A$ but ordered by: $\phi\leq\psi$ if and only if $(\psi\to U)\leq (\phi\to U)$ in $[I,{\cal D}(A,A')]$.\end{arlist}\end{definition}
\begin{theorem}\label{krivinetriposchar} The tripos $[-,{\cal P}(\Pi )^U_{A,A'}]$ is equivalent to the Booleanization of $[-,{\cal D}(A,A')]$ with respect to $U$.\end{theorem}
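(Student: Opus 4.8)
The plan is to exhibit the equivalence at the level of the underlying filtered opcas and then invoke the full $2$-embedding $\Sigma\mapsto[-,\Sigma]$ of {\bf BCO} into Set-indexed preorders. Concretely, I would compare the filtered opca ${\cal P}_\pole(\Pi)^U_{A,A'}$ (with its filter $\Phi$) against a filtered opca presenting the $U$-Booleanization, namely the BCO carried by ${\cal D}A$ but equipped with the $U$-twisted order of Definition~\ref{UBooleanization}(3). The two candidate comparison maps are the ``direct image'' $|{\cdot}|:{\cal P}_\pole(\Pi)\to{\cal D}A$, $\beta\mapsto|\beta|$, and the orthogonal $(-)^\pole:{\cal D}A\to{\cal P}_\pole(\Pi)$, $\phi\mapsto\phi^\pole$. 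The first observation, which fixes the dictionary between the two sides, is that since $\Pi\subseteq A=\Lambda$ and $(t,\pi)\in\pole$ iff $t\pi\in U$, we have $|\beta|=\beta\to U$ for every $\beta\in{\cal P}_\pole(\Pi)$; thus $|{\cdot}|$ lands exactly among the $U$-regular downsets, those of the shape $\gamma\to U$.

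Next I would check that $|{\cdot}|$ is an applicative morphism of filtered opcas, equivalently (Proposition~\ref{appl=fpp}) a finite-meet-preserving morphism of BCOs. The essential computations are: reverse inclusion of $\pole$-closed sets of stacks is sent by $|{\cdot}|$ to the corresponding inequality of $U$-implications and is realized by the identity combinator (since $\beta\subseteq\gamma$ yields $\beta\leq\gamma$ via $\cmb{i}$); the Streicher application $\alpha\bullet\beta$ is sent to the opca application on ${\cal D}A$ twisted by $(-)\to U$; and the filter condition ``$|\alpha|\cap{\sf QP}\neq\emptyset$'' matches membership in the filter of ${\cal D}(A,A')$, because ${\sf QP}=A'$. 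That $|{\cdot}|$ is full and faithful on preorders, i.e.\ $|\beta|\leq|\beta'|$ iff $\beta\leq\beta'$, is where one uses that both sides are $U$-biorthogonally closed, so that the $U$-implication order on the images reconstructs reverse inclusion of $\pole$-closed sets.

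The crux, and the step I expect to be the main obstacle, is reconciling two closure operations that are a priori distinct. On the Streicher side the relevant closure is the biorthogonal $\phi^{\pole\pole}$, whose inner orthogonal $\phi^\pole=\{\pi\in\Pi:\text{for all }b\in\phi,\,b\pi\in U\}$ applies $\phi$-elements on the \emph{left} and ranges only over stacks; on the Booleanization side it is $(\phi\to U)\to U$, which applies on the \emph{right} and ranges over all of $A$. To identify $|\phi^\pole|=\phi^{\pole\pole}$ with $(\phi\to U)\to U$ up to tripos-isomorphism I would use the sequence-management combinators of Lemma~\ref{sequencemanage}: the embedding $\cmb{t}a\leq[a]$ together with the projection $\cmb{b}\bar{\cmb{0}}[a]\leq a$ give mutually inverse (up to $\leq$, with realizers in $A'$) passages between an element $a$ and its singleton-stack coding $[a]$, while $\cmb{d},\cmb{c}$ let one assemble and disassemble the coded arguments hidden inside the Streicher application $a{\cdot}b=\langle\pi\rangle a(b.\pi)$. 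These realizers convert a term sending every stack in $\phi^\pole$ into $U$ into one sending every element of $\phi\to U$ into $U$, and conversely, so that the restriction to stack-codings in the orthogonal becomes invisible to the tripos preorder. Making these realizers uniform in the index set $I$, rather than merely pointwise, is the delicate point, and this is exactly what the $\cmb{k},\cmb{s}$-definability of $\cmb{b},\cmb{c},\cmb{d},\cmb{t}$ inside $A'$ guarantees.

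Finally I would assemble the pieces: the computations above show $|{\cdot}|$ and $(-)^\pole$ are applicative morphisms whose composites are isomorphic to the identities in the respective preorders, so by Proposition~\ref{appl=fpp} and the $2$-embedding the induced transformation $[-,{\cal P}_\pole(\Pi)^U_{A,A'}]\to[-,{\cal D}(A,A')]$ is a geometric inclusion whose image consists of the $U$-regular objects; by Corollary~\ref{SsubtriposDS} this inclusion is the subtripos cut out by the local operator $((-)\to U)\to U$, which is precisely the $U$-Booleanization of Definition~\ref{UBooleanization}. Since Streicher's Theorem~\ref{aks=>opca} already tells us the left-hand tripos is Boolean, this identification is consistent and completes the equivalence.
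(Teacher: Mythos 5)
Your overall strategy---compare via $|{\cdot}|$ and invoke the $2$-embedding---is a reasonable repackaging of what the paper does, and your dictionary $|\beta|=\beta^{\pole}=\beta\to U$ for $\beta\subseteq\Pi$ is exactly the comparison map the paper uses ($\phi\mapsto\phi\to U$, checked against Streicher's characterization of the preorder). But there is a genuine gap precisely at the step you flag as the crux, and your proposed repair does not close it. For a set of \emph{terms} $\phi$, the orthogonal $\phi^{\pole}=\{\pi\in\Pi\mid\text{for all }t\in\phi,\ t\pi\in U\}$ applies the elements of $\phi$ \emph{to} the stack, whereas $\phi\to U=\{a\mid\text{for all }b\in\phi,\ ab\in U\}$ applies the candidate realizer to the elements of $\phi$. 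These are two different residuals of the application, not two codings of the same thing, so no amount of shuttling between $a$ and $[a]$ with $\cmb{b},\cmb{c},\cmb{d},\cmb{t}$ will identify $\phi^{\pole\pole}=\phi^{\pole}\to U$ with $(\phi\to U)\to U$: the sequence combinators cure the ``stacks versus all of $A$'' discrepancy but not the left/right one. Consequently your candidate inverse $(-)^{\pole}:{\cal D}A\to{\cal P}_{\pole}(\Pi)$ is the wrong map, and essential surjectivity of $|{\cdot}|$ onto the Booleanization is not established. The paper never takes the orthogonal of a set of terms; it gets essential surjectivity from the elementary remark that every $\gamma\in[I,{\cal D}(A,A')]$ is isomorphic to $i\mapsto\dar X_i$ with $X_i$ a set of coded sequences, so that every $\delta\to U$ is already isomorphic to $|X|$ for some $X\subseteq\Pi$. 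That observation is the correct replacement for your $(-)^{\pole}$.

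Two further points. Where the sequence combinators \emph{are} genuinely needed is in matching the two preorders: Streicher's condition ``$(t,u.\pi)\in\pole$ for all $u\in|\phi(i)|$ and $\pi\in\psi(i)$'' must be interconverted with ``$a$ realizes $(\phi\to U)\leq(\psi\to U)$'' via $\langle u\pi\rangle a(u.\pi)$ and $\langle\rho\rangle a\rho_0\rho_{\geq 1}$; this works because both conditions apply the realizer to $u$, so there is no variance clash there. Second, you must address the variance of the target: relative to Definition~\ref{UBooleanization}(3) the comparison $\phi\mapsto\phi\to U$ is order-\emph{reversing}, so one lands in the opposite of the Booleanization and has to invoke the self-duality of Boolean prealgebras via negation, as the paper does (alternatively, aim at version (1), where the inherited order makes the comparison covariant). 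Finally, Corollary~\ref{SsubtriposDS} only exhibits $[-,\Sigma]$ as \emph{some} subtripos of $[-,{\cal D}\Sigma]$; it says nothing about which local operator cuts it out, so it cannot be used to conclude that the image is the $((-)\to U)\to U$-closed part---that identification is exactly the content of the preorder computation above.
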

\begin{proof} Streicher has characterized the preorder in the tripos $[-,{\cal P}_{\pole}(\Pi )]$ arising from an aks, as follows (\cite{StreicherT:kcrcp}, Lemma 5.5): for $\phi ,\psi :I\to {\cal P}_{\pole}(\Pi )$, $\phi\leq\psi$ if and only if there is an element $t\in {\sf QP}$ satisfying:\begin{itemize}\item[] for all $i\in I$, all $u\in |\phi (i)|$ and all $\pi\in\psi (i)$, $(t,u.\pi )\in\pole$\end{itemize}
The first thing to notice is that this preorder extends to $[I,{\cal P}(\Pi )]$ and that in the latter preorder, every $\phi$ is isomorphic to $\phi ^{\pole\pole}$ (both inequalities are realized by $(\cmb{S}{\cdot}\cmb{K}){\cdot}\cmb{K}$); therefore, the tripos $[-,{\cal P}_{\pole}(\Pi )]$ is equivalent to $[-{\cal P}(\Pi )]$ (this was also noticed by Ferrer Santos et al; see 5.15 of \cite{FerrerSantosW:ordcar}).
In our case of ${\cal P}(\Pi )^U_{A,A'}$ we can therefore consider all functions $\phi :I\to {\cal P}(\Pi )$, ordered as follows: $\phi\leq\psi$ if and only if for some $a\in A'$ we have\begin{itemize}\item[$(\circ )$] for all $i\in I$, all $u\in\phi (i)\to U$ and all $\pi\in\psi (i)$, $a(u.\pi )$ is defined and in $U$\end{itemize}
Now if $a\in A'$ satisfies $(\circ )$ then for all $i\in I$, $\langle u\pi\rangle a(u.\pi )$ is an element of $A'$ which is in $(\phi (i)\to U)\to (\psi (i)\to U)$; hence $a\in A'$ realizes $(\phi\to U)\leq (\psi\to U)$ in $[I,{\cal D}(A,A')]$. Conversely, if $a\in A'$ realizes $(\phi\to U)\leq (\psi\to U)$ in $[I,{\cal D}(A,A')]$, then $\langle\rho\rangle a\rho _0\rho _{\geq 1}$ is an element of $A'$ satisfying $(\circ )$.

Furthermore we notice that any element of $[I,{\cal D}(A,A')]$ is isomorphic to a function $\phi :I\to {\cal D}(A,A')$ of the form $i\mapsto\dar X_i$ where $X_i$ is a set of coded sequences: this is easy.

We conclude that any $\phi\in [I,{\cal D}(A,A')]$ of the form $\phi '\to U$ is, up to isomorphism, in the image of the map
$$[I,{\cal P}(\Pi )^U_{A,A'}]\to [I,{\cal D}(A,A')]$$
given by $\phi\mapsto (\phi\to U)$.

Hence, we see that $[-,{\cal P}(\Pi )^U_{A,A'}]$ is equivalent to the {\em opposite\/} of the Booleanization of $[-,{\cal D}(A,A')]$ with respect to $U$. However, since the latter is an indexed pre-Boolean algebra and since every Boolean algebra is isomorphic to its opposite (by the negation map), we have the claimed result.\end{proof}
\begin{theorem}\label{char2} Every Krivine tripos is the Booleanization of $[-,{\cal D}(A,A')]$ with respect to $U$, for some filtered opca $(A,A')$ and a downset $U$ of $A'$ which does not meet $A'$.\end{theorem}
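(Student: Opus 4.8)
The plan is to run Streicher's construction on the given aks and then recognise the resulting Boolean tripos, which is already a subtripos of $[-,{\cal D}(A,A')]$, as a $U$-Booleanization for a canonical choice of $U$. Given an aks, let $(A,A')=({\cal P}_{\pole}(\Pi),\Phi)$ be the filtered opca of Theorem~\ref{aks=>opca}, so that the Krivine tripos in question is $[-,A]$, which is Boolean. Recall (the observation of Ferrer Santos et al quoted above) that $A$ is in fact an implicative order-ca, with the displayed implication $\Rightarrow$ and with $\{\cc\}^{\pole}\in\Phi$ realising Pierce's Law; in particular $A$ is a complete lattice whose $\leq$-least element (in the reverse-inclusion order) is $\bot=\Pi$. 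I would take
$$U\;=\;{\downarrow}\bot\;=\;\{\bot\}\subseteq A ,$$
a principal downset. Its required property $U\cap A'=\emptyset$ is exactly $|\bot|\cap{\sf QP}=\emptyset$, i.e.\ that no quasi-proof opposes every stack; this is the non-degeneracy of the aks (should it fail, the tripos is the terminal one and the statement holds trivially).

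Next I would locate $[-,A]$ inside $[-,{\cal D}(A,A')]$. Since $[-,A]$ is a tripos, $A$ carries its ${\cal D}$-algebra structure $\bigvee$ by Theorem~\ref{Striposchar}, and Corollary~\ref{SsubtriposDS} presents $[-,A]$ as the geometric inclusion given by $\bigvee\dashv{\downarrow}(-)$ into $[-,{\cal D}(A,A')]$, whose reflector is $\phi\mapsto{\downarrow}\bigvee\phi$ and whose sheaves are the principal downsets. It then suffices to prove that the local operator $((-)\to U)\to U$ of Definition~\ref{UBooleanization} is isomorphic, in $[-,{\cal D}(A,A')]$, to this reflector; the two subtriposes then have the same sheaves, and $[-,A]$ is equivalent to the Booleanization with respect to $U$.

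The computation is almost entirely forced by the implicative structure. Because application in $A$ is total and $U=\{\bot\}$, for $\phi\in{\cal D}A$ we have $\phi\to U=\{\gamma\mid\forall\beta\in\phi,\ \gamma\bullet\beta\leq\bot\}$, and the ioca adjunction $\gamma\bullet\beta\leq\bot\Leftrightarrow\gamma\leq(\beta\Rightarrow\bot)=\neg\beta$ turns this into ${\downarrow}\bigwedge_{\beta\in\phi}\neg\beta$. Applying the same step to the principal downset just obtained, and using that $\neg$ is order-reversing, yields the exact identity
$$((-)\to U)\to U:\ \phi\ \longmapsto\ {\downarrow}\,\neg\!\!\bigwedge_{\beta\in\phi}\neg\beta .$$
It remains to identify ${\downarrow}\neg\bigwedge_{\beta\in\phi}\neg\beta$ with the reflector ${\downarrow}\bigvee\phi$. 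Reading $(\beta)_{\beta\in\phi}$ as a predicate indexed by the set $\phi$, the element $\bigvee\phi$ is its tripos existential and $\neg\bigwedge_{\beta\in\phi}\neg\beta$ is $\neg\forall\neg$ of it; since $[-,A]$ is Boolean, the classical equivalence $\exists\simeq\neg\forall\neg$ holds and is realised by elements of $\Phi$, so $\bigvee\phi\simeq\neg\bigwedge_{\beta\in\phi}\neg\beta$ in $[-,A]$ and hence ${\downarrow}\bigvee\phi\simeq{\downarrow}\neg\bigwedge_{\beta\in\phi}\neg\beta$ in $[-,{\cal D}(A,A')]$. This gives the isomorphism of local operators, completing the proof.

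The step needing the most care, and the only content beyond bookkeeping, is this last identification: every ``$\simeq$'' must be witnessed uniformly in $\phi$ by elements of the filter $\Phi=A'$, since it is the filter that governs the tripos preorder. The inequality $\bigvee\phi\leq\neg\forall\neg$ is intuitionistic and harmless, but the converse is exactly where the control combinator of the aks enters, through $\{\cc\}^{\pole}$ realising Pierce's Law and thence $\neg\neg\simeq\mathrm{id}$ and $\neg\forall\neg\leq\exists$. I would also dispatch the two boundary points: that the ${\cal D}$-algebra join $\bigvee$ coincides with the genuine supremum of the complete lattice $A$, so that the adjunction computation above is an honest identity rather than merely an isomorphism, and the non-degeneracy securing $\bot\notin\Phi$.
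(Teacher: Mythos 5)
Your proof is correct in outline, but it takes a genuinely different and more explicit route than the paper. The paper's own proof is two lines of soft topos theory: a Krivine tripos is $[-,A]$ for a filtered oca $(A,A')$, hence by Corollary~\ref{SsubtriposDS} a Boolean subtripos of $[-,{\cal D}(A,A')]$, and Lemma A4.5.21 of \cite{JohnstonePT:skee} then says that any Boolean subtopos is the Booleanization of a closed subtopos, i.e.\ corresponds to a local operator $((-)\to U)\to U$ for some $U$; the paper never identifies $U$. You instead exhibit $U$ concretely as the principal downset $\dar\bot=\{\Pi\}$ in ${\cal P}_{\pole}(\Pi )$ and verify by hand, using the implicative structure, that $(\phi\to U)\to U=\dar\neg\bigwedge_{\beta\in\phi}\neg\beta$ and that this agrees up to uniform isomorphism with the closure operator $\dar\bigvee(-)$ of the inclusion from Corollary~\ref{SsubtriposDS}. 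This is more informative (it names the closed subtopos being Booleanized, namely the one determined by $\{\Pi\}$, which is genuinely useful given that Theorem~\ref{krivinetriposchar} goes in the other direction), at the cost of re-proving the relevant fragment of A4.5.21 in this setting. Two points you flag deserve emphasis: the converse inequality $\neg\bigwedge_{\beta\in\phi}\neg\beta\leq\bigvee\phi$ must be realized by a single element of the filter uniformly in $\phi$, and this is exactly where $\{\cc\}^{\pole}$ and Pierce's Law enter (the forward inequality is just the instance $b=\bot$ of the impredicative formula for $\bigvee$ in the proof of Theorem~\ref{preimptripos}, so it is realized by $\cmb{i}$). One small caveat: your dismissal of the degenerate case ($\Pi^{\pole}\cap{\sf QP}\neq\emptyset$) as ``trivial'' is glib, since a Booleanization with respect to a $U$ disjoint from the filter need not be degenerate; but the paper's proof is equally silent on this boundary case, and it is reasonable to regard nondegeneracy as a standing assumption on the aks.
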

\begin{proof} By Streicher's result, a Krivine tripos is of the form $[-, A]$ for some filtered oca $(A,A')$. By \ref{SsubtriposDS}, it is therefore a subtripos of $[-,{\cal D}(A,A')]$, and in particular a Boolean subtripos. But now by standard topos theory (see Lemma A4.5.21 in \cite{JohnstonePT:skee}), it must be the Booleanization of $[-,{\cal D}(A,A')]$ with respect to some $U$, as required.\end{proof}

\subsection{When is a Krivine tripos localic?}
Recall that Hofstra had characterized, for a BCO $\Sigma$ such that $[-,\Sigma ]$ is a tripos, when this tripos is localic: ${\rm TV}(\Sigma )$ must have a least element (theorem~\ref{localicchar}).

Krivine (\cite{KrivineJL:somprm} formulated a condition for an aks to lead to an interpretation of set theory which is a forcing interpretation: the set
$$|\top\to (\bot\to\bot )|\cap |\bot\to (\top\to\bot )|$$
must contain an element of the set ${\sf QP}$ of quasi-proofs.

Taking into account the way logic is interpreted in an aks, this means the following: for some $a\in {\sf QP}$ we have: \begin{itemize}\item[(Kr)] $\forall s\in\Pi ^{\pole}\forall t,\pi ((a,t.s.\pi )\in\pole\;\text{ and }\; (a,s.t.\pi )\in\pole )$\end{itemize}

\begin{theorem}\label{Kr=Hof} Let $\cal K$ be an aks, an $\Sigma _{\cal K}$ be the filtered oca resulting from $\cal K$ by Streicher's construction. Then ${\cal K}$ satisfies {\rm (Kr)} if and only if ${\rm TV}(\Sigma _{\cal K})$ has a least element.\end{theorem}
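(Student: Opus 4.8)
\noindent The plan is to make both conditions completely explicit in the combinatory language of the aks and then to match them. Write $A={\cal P}_{\pole}(\Pi)$ with filter $\Phi$ for $\Sigma_{\cal K}$; by theorem~\ref{aks=>opca} this is a total order-ca with $[-,A]$ a Boolean tripos, so theorem~\ref{localicchar} applies and the claim concerns the least element of the preorder ${\rm TV}(A)$. First I would unwind ${\rm TV}(A)$. The poset $(A,\supseteq)$ has top element $\emptyset^{\pole\pole}=\Lambda^{\pole}$ (with realizer set $\Lambda$), and this also serves as the internal top $\top$ of the BCO, since $\cmb{i}\alpha\le\alpha\le\top$ for every $\alpha$. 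Unwinding the definition of ${\rm TV}$ and using that a filter is closed under application, I would identify the designated truth values with $\Phi$ itself, that is, ${\rm TV}(A)=\{\beta\in A\mid |\beta|\cap{\sf QP}\neq\emptyset\}$: the propositions realized by a quasi-proof. The preorder on ${\rm TV}(A)$ is the one made explicit by Streicher (\cite{StreicherT:kcrcp}, Lemma 5.5): $\alpha\le\beta$ iff there is $t\in{\sf QP}$ with $(t,u.\pi)\in\pole$ for all $u\in|\alpha|$ and all $\pi\in\beta$.

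\medskip
\noindent Next I would translate (Kr). Using the implication formula $\alpha\Rightarrow\beta=\{t.\pi\mid t\in|\alpha|,\pi\in\beta\}^{\pole\pole}$ together with $|\top|=\Lambda$ and $|\bot|=\Pi^{\pole}$, one computes
$$|\top\to(\bot\to\bot)|=\{w\mid (w,t.s.\pi)\in\pole\ \text{for all}\ t\in\Lambda,\,s\in\Pi^{\pole},\,\pi\in\Pi\}$$
and, symmetrically, $|\bot\to(\top\to\bot)|=\{w\mid (w,s.t.\pi)\in\pole\ \text{for all}\ s\in\Pi^{\pole},\,t\in\Lambda,\,\pi\in\Pi\}$. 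Thus (Kr) says exactly that a \emph{single} quasi-proof $a$ realizes both implications, i.e.\ is orthogonal to every pushed stack $t.s.\pi$ and every $s.t.\pi$ with $s$ a realizer of $\bot$. A preliminary observation to record is that each of the two implications is, on its own, \emph{always} designated ($\bot\to X$ is realized by a fixed quasi-proof for any $X$, and $\top\to Y$ is realized whenever $Y$ is, all by the rules (S1)--(S3)); hence the genuine content of (Kr) lies in the demand for a realizer common to both, equivalently in the poset-meet $N=(\beta\cup\beta')^{\pole\pole}$ of these two propositions (whose realizer set is the intersection $|\top\to(\bot\to\bot)|\cap|\bot\to(\top\to\bot)|$) being designated.

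\medskip
\noindent The bridge between the two sides is then the claim that $N$ is the $\le$-infimum of ${\rm TV}(A)$, so that ${\rm TV}(A)$ has a least element precisely when that infimum is attained, i.e.\ precisely when $N\in{\rm TV}(A)$, i.e.\ precisely when (Kr) holds (and the least element is then $N$). For the direction (Kr)$\Rightarrow$least element I would take the symmetric witness $a$ and show, using the push/pop combinators of lemma~\ref{sequencemanage} and rules (S1)--(S3), that $a$ realizes $N\le\gamma$ for every designated $\gamma$: given a quasi-proof orthogonal to $\gamma$, the ``permuted'' conjunct $(a,s.t.\pi)\in\pole$ is what lets one move that realizer into the argument position expected by a realizer of $N$. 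Together with $a\in{\sf QP}\cap|N|$ this exhibits $N$ as a least designated truth value. For the converse I would start from a least element $m$; since $m$ is the infimum of ${\rm TV}(A)$ and $N$ is a lower bound we get $N\le m$, while the lower-bound property of $m$ applied to the (always designated) propositions $\top\to(\bot\to\bot)$ and $\bot\to(\top\to\bot)$ should, after recombination, yield $m\le N$, whence $N\equiv m\in{\rm TV}(A)$, which is (Kr).

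\medskip
\noindent The main obstacle is precisely this recombination, and more generally the verification that $N$ is a \emph{universal} lower bound. The two implications are realized by different quasi-proofs, and extracting from $m$ a single term insensitive to the order of its first two arguments (whenever one realizes $\bot$) is exactly where the control operator $\cc$ enters: following the pattern by which $\{\cc\}^{\pole}$ realizes Pierce's Law in theorem~\ref{aks=>opca}, one would capture the current stack as a term $k_{\pi}$ --- which behaves as a $\bot$-realizer through rules (S4)--(S5) --- feed it into $m$ to activate the symmetric weakening, and thereby interchange the two argument positions. Carrying the sequence coding of lemma~\ref{sequencemanage} cleanly through these continuation manipulations, and checking at each stage that the terms produced remain in ${\sf QP}=A'$ rather than merely in $\Lambda=A$, is the delicate bookkeeping the full proof must discharge; it is also the only place where the distinction between the poset meet $N$ and the tripos meet of the two propositions has to be handled with care.
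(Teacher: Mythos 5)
There is a genuine gap, and it is located at the very first step: you take the preorder on ${\rm TV}(\Sigma_{\cal K})$ to be the realizability preorder of Streicher's Lemma 5.5 ($\alpha\leq\beta$ iff some $t\in{\sf QP}$ satisfies $(t,u.\pi)\in\pole$ for all $u\in|\alpha|$, $\pi\in\beta$). Hofstra's criterion \ref{localicchar}, as used in the paper, refers to the underlying {\em partial order\/} of the opca ${\cal P}_{\pole}(\Pi)$, i.e.\ reverse inclusion of sets of stacks. The difference is fatal: under your preorder, for any designated $\gamma$ with $b\in|\gamma|\cap{\sf QP}$ the quasi-proof $\cmb{K}{\cdot}b$ satisfies $(\cmb{K}{\cdot}b,u.\pi)\in\pole$ for {\em every\/} term $u$ and every $\pi\in\gamma$ (by (S2) and (S1)), so $\delta\leq\gamma$ for all $\delta$ whatsoever. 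Hence ${\rm TV}$ would always have a least element, the right-hand side of the theorem would be vacuous, and the equivalence with (Kr) would fail --- indeed it would contradict theorem~\ref{nonlocalicexmp}. This error then infects your ``bridge'': the claim that $N=(\beta\cup\beta')^{\pole\pole}$ is the infimum of ${\rm TV}$ is neither proved nor true for the correct order. For $N\supseteq\gamma$ one would need every $u\in|N|$ (an arbitrary term with the symmetric weakening property, not a quasi-proof) to kill every stack of every designated $\gamma$; a stack of $\gamma$ need not have the form $t.s.\pi$ or $s.t.\pi$ with $s\in\Pi^{\pole}$, so there is no reason for this.

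The paper's proof takes a different least element. For ``(Kr) $\Rightarrow$ least element'' it invokes Krivine's construction (p.~16 of \cite{KrivineJL:somprm}) of a single quasi-proof $t$, built from the (Kr)-witness using $\cc$, such that $t\in|X|$ for {\em every\/} $X$ with $|X|\cap{\sf QP}\neq\emptyset$; then $\{t\}^{\pole}$ contains every designated $\gamma$ and is itself designated, so it is least for reverse inclusion. Note $\{t\}^{\pole}\supseteq N$ in general, so your candidate $N$ sits strictly above the actual least element; the continuation argument you defer to ``delicate bookkeeping'' is precisely the construction of $t$, and it is the mathematical core of this direction, not a routine verification. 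For the converse the paper argues concretely: if $\alpha$ is least with $a\in|\alpha|\cap{\sf QP}$, then $\{b\}^{\pole}\subseteq\{a\}^{\pole}$ for every quasi-proof $b$; taking $b=\cmb{K}$ and $b=\cmb{K}'=\cmb{K}{\cdot}((\cmb{S}{\cdot}\cmb{K}){\cdot}\cmb{K})$ and observing that for $s\in\Pi^{\pole}$ the stacks $s.t.\pi$ and $t.s.\pi$ lie in $\{\cmb{K}\}^{\pole}$ and $\{\cmb{K}'\}^{\pole}$ respectively, one concludes $(a,s.t.\pi)\in\pole$ and $(a,t.s.\pi)\in\pole$, which is (Kr). Your converse (``recombination'' of the two implications out of $m$) rests on the mistaken infimum claim and is not carried out. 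To repair the proposal you would need to (i) work with reverse inclusion throughout, (ii) replace $N$ by $\{t\}^{\pole}$ and actually perform Krivine's $\cc$-construction of $t$, and (iii) give the explicit $\cmb{K}$/$\cmb{K}'$ argument for the converse.
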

\begin{proof} For the only if part, suppose ${\cal K}$ satisfies {\rm (Kr)}. Krivine proved already (see p.\ 16 of \cite{KrivineJL:somprm} that there is a quasi-proof $t$ with the property that for every $X\subseteq\Pi$ and every $b\in {\sf QP}$: if $b\in |X|$, then $t\in |X|$. Since $t\in {\sf QP}$, $\{ t\} ^{\pole}\in\Phi$ (where $\Phi$ is the filter of $\Sigma _{\cal K}$). And for every $\beta\in\Phi$ we have $t\in |\beta |$, so 
$$\beta\subseteq\beta ^{\pole\pole}\subseteq\{ t\} ^{\pole}$$
which, given that the order in $\Sigma$ is {\em reverse\/} inclusion, tells us that ${\rm TV}(\Sigma )$ has a least element.

\noindent Conversely, suppose $\alpha\in\Phi$ is the least element of $\Phi$. Then for all $\beta\in\Phi$, $\beta\subseteq\alpha$, so for every $b\in {\sf QP}$, $\{ b\} ^{\pole}\subseteq\alpha$. If $a\in |\alpha |\cap {\sf QP}$, then $\alpha\subseteq \{ a\} ^{\pole}$, so for all $b\in {\sf QP}$ we have $\{ b\} ^{\pole}\subseteq \{ a\} ^{\pole}$.

Let $\cmb{K}'$ be $\cmb{K}{\cdot}((\cmb{S}{\cdot}\cmb{K}){\cdot}\cmb{K})$; then it is easy to verify that if $(t,\pi )\in\pole$, then $(\cmb{K}',s.t.\pi )\in\pole$, for any term $s$.

Now for $s\in\Pi ^{\pole}$, $\pi\in\Pi$ we have $(s,\pi )\in\pole$ and hence, for any term $t$, we have $(\cmb{K},s.t.\pi )\in\pole$ and $(\cmb{K}',t.s.\pi )\in\pole$, whence $s.t.\pi\in \{\cmb{K}\} ^{\pole}$ and $t.s.\pi\in\{\cmb{K}'\} ^{\pole}$. Since both $\cmb{K}$ and $\cmb{K}'$ are quasi-proofs, by the property of $a$ we find that both $s.t.\pi$ and $t.s.\pi$ are elements of $\{ a\} ^{\pole}$, i.e.\ $(a,s.t.\pi )\in\pole$ and $(a,t.s.\pi )\in\pole$, as desired.\end{proof}
\medskip

\noindent Let us spell out what it means for the tripos $[-,{\cal P}(\Pi )^U_{A,A'}]$ to be localic. The filter consists of those $\alpha\subseteq A$ for which $(\alpha\to U)\cap A'\neq\emptyset$. We require that there is a {\em least\/} such $\alpha$; keeping in mind that the order on ${\cal P}(\Pi )^U_{A,A'}$ is {\em reverse\/} inclusion, we need an $\alpha$ such that\begin{rlist}
\item $(\alpha\to U)\cap A'\neq\emptyset$
\item Whenever $(\beta\to U)\cap\emptyset$, $\beta\subseteq\alpha$\end{rlist}
The following proposition simplifies this somewhat:
\begin{proposition}\label{localiccrit} The tripos $[-,{\cal P}(\Pi )^U_{A,A'}]$ is localic if and only if there exists an element $\cmb{e}\in A'$ with the property that whenever $b\in A'$, $a\in A$ and $ba\in U$, then $\cmb{e}a\in U$.\end{proposition}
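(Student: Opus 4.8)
The plan is to feed Hofstra's criterion (theorem~\ref{localicchar}) into the concrete reduction already carried out immediately above the proposition: the tripos $[-,{\cal P}(\Pi)^U_{A,A'}]$ is localic exactly when the filter $\Phi=\{\alpha\subseteq A : (\alpha\to U)\cap A'\neq\emptyset\}$, ordered by \emph{reverse} inclusion, has a least element; equivalently, when there is a subset $\alpha\subseteq A$ satisfying (i) $(\alpha\to U)\cap A'\neq\emptyset$ and (ii) $\beta\subseteq\alpha$ for every $\beta\in\Phi$. So the entire task reduces to deciding when $\Phi$ has a largest member under inclusion, and then translating that into a statement about $U$ and $A'$.

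First I would show that localicity is equivalent to the single assertion $\bigcup\Phi\in\Phi$. In one direction, if $\bigcup\Phi\in\Phi$ then it satisfies (i) directly and satisfies (ii) because it contains every $\beta\in\Phi$. In the other direction, suppose some $\alpha$ satisfies (i) and (ii); by (ii) we have $\alpha\supseteq\bigcup\Phi$, and since $\alpha\subseteq\alpha'$ always forces $(\alpha'\to U)\subseteq(\alpha\to U)$, this gives $(\alpha\to U)\cap A'\subseteq((\bigcup\Phi)\to U)\cap A'$; as the left-hand set is nonempty by (i), so is the right, i.e. $\bigcup\Phi\in\Phi$. Hence localicity $\iff\bigcup\Phi\in\Phi\iff((\bigcup\Phi)\to U)\cap A'\neq\emptyset$.

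Next I would compute $\bigcup\Phi$ explicitly. Writing $V=\{a\in A : \exists b\in A',\ ba\in U\}$, I claim $\bigcup\Phi=V$. For $\subseteq$, take $\alpha\in\Phi$ and a witness $c\in(\alpha\to U)\cap A'$; then each $a\in\alpha$ has $ca\in U$ with $c\in A'$, so $a\in V$. For $\supseteq$, any $a\in V$ has a witness $b\in A'$ with $ba\in U$, whence $b\in(\{a\}\to U)\cap A'$, so $\{a\}\in\Phi$ and $a\in\bigcup\Phi$. Plugging this in, $\bigcup\Phi\in\Phi$ unwinds to the existence of $\cmb{e}\in(V\to U)\cap A'$, i.e. some $\cmb{e}\in A'$ with $\cmb{e}a\in U$ for all $a\in V$. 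Since $a\in V$ means precisely that $ba\in U$ for some $b\in A'$, this is exactly the statement that $\cmb{e}a\in U$ whenever $b\in A'$, $a\in A$ and $ba\in U$ — the condition in the proposition. (The membership $\cmb{e}a\in U$ already carries the definedness that is built into $\alpha\to U$.)

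I do not expect a serious obstacle; the content is a correct identification of $\bigcup\Phi$ together with careful bookkeeping of the reverse-inclusion orientation. The only points that demand a little care are, first, that the natural candidate for the largest filter element is the honest union $\bigcup\Phi$ rather than some $\pole\pole$-closure — which is legitimate because we work in the equivalent presentation of the tripos on all subsets $\alpha\subseteq A$ set up just before the proposition — and second, that one must check the definedness clause in $\alpha\to U$ is automatically met, which it is, since asserting $\cmb{e}a\in U$ presupposes $\cmb{e}a$ is defined.
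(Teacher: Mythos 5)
Your argument is correct and follows exactly the route the paper intends: the paper reduces localicity to the filter $\Phi$ having a least element in the paragraph preceding the proposition and then declares the proof ``Obvious'', and your identification of $\bigcup\Phi$ with $V=\{a\in A\mid \exists b\in A',\ ba\in U\}$ is precisely the bookkeeping that makes that reduction work. Nothing is missing; you have merely written out the details the paper omits.
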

\begin{proof} Obvious.\end{proof}

\begin{example}\label{localicexample}\em \begin{arlist}
\item For $U=A-A'$, the tripos $[-,{\cal P}(\Pi )^U_{A,A'}]$ is localic, since $\cmb{e}=\cmb{skk}$ satisfies criterion~\ref{localiccrit}
\item Every filter $A'$ on an opca $A$ induces a preorder $\leq _T$ on $A$ which is analogous to Turing reducibility: $a_1\leq _Ta_2$ if and only if for some $b\in A'$ we have $ba_2\leq a_1$. Note, that $a_1\leq a_2$ implies $a_2\leq _Ta_1$, so for any $a\in A$ the set $\{ b\in A\, |\, a\leq _Tb\}$ is downwards closed w.r.t.\ $\leq$.

Now suppose that the set $U$ is upwards closed w.r.t.\ $\leq _T$ (hence downwards closed w.r.t.\ $\leq$). Then whenever $b\in A'$ and $ba\in U$, we have $ba\leq _Ta$ and $\cmb{skk}a\leq a$ hence $a\leq _T\cmb{skk}a$, so we get $\cmb{skk}a\in U$, which means that again, $\cmb{skk}$ satisfies criterion~\ref{localiccrit} and $[-,{\cal P}(\Pi )^U_{A,A'}]$ is localic.\end{arlist}
\end{example}
We conclude this paper with a family of examples where $[-,{\cal P}(\Pi )^U_{A,A'}]$ is non-localic. We consider the pca ${\cal K}_2$, which is the set of functions $\mathbb{N}\to\mathbb{N}$. Given two such functions $\alpha ,\beta$, we define the relation $\alpha\beta (n)=k$ as: there is a number $N\in\mathbb{N}$ satisfying:
$$\begin{array}{l}\alpha ([n,\beta (0),\ldots ,\beta (N-1)]) = k+1 \\
\text{for all } l<N, \alpha ([n,\beta (0),\ldots ,\beta (l-1)]) = 0\end{array}$$
Here, $[...]$ refers to some computable coding of sequences of natural numbers. We then say: $\alpha\beta$ is defined, if and only if for each $n\in\mathbb{N}$ there is some $k$ such that $\alpha\beta (n)=k$, and $\alpha\beta$ is then the corresponding function $\mathbb{N}\to\mathbb{N}$. This is a partial combinatory algebra.

The pca ${\cal K}_2$ has a filter: the set of total recursive (computable) functions $\mathbb{N}\to\mathbb{N}$. We write $({\cal K}_2,{\rm Rec})$ for the corresponding filtered opca. We are interested in choices for $U$ such that the tripos $[-,{\cal P}(\Pi )^U_{{\cal K}_2,{\rm Rec}}]$ is non-localic. 

We remind the reader of the natural topology on ${\cal K}_2$: basic open sets are of the form
$$V_{\sigma}\; =\;\{\alpha\in {\cal K}_2\, |\, \alpha (0)=\sigma _0,\ldots ,\alpha (n)=\sigma _n\}$$
for some finite sequence $\sigma =(\sigma _0,\ldots ,\sigma _n)$.
\begin{theorem}\label{nonlocalicexmp} Let $U$ be a set of nonrecursive functions. If $U$ is discrete as a subspace of ${\cal K}_2$, then the tripos $[-,{\cal P}(\Pi )^U_{{\cal K}_2,{\rm Rec}}]$ is non-localic.\end{theorem}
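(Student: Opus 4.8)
The plan is to argue by contradiction, via the criterion of Proposition~\ref{localiccrit}. If the tripos were localic, there would be a recursive $\cmb{e}\in{\rm Rec}$ such that for every recursive $b$ and every $a\in{\cal K}_2$, $ba\in U$ implies $\cmb{e}a\in U$. Writing $D=\{a\in{\cal K}_2\mid \exists\, b\in{\rm Rec},\ ba\in U\}$, this says exactly that the single recursive functional $\cmb{e}$ maps all of $D$ into $U$. I would therefore fix an arbitrary $\cmb{e}\in{\rm Rec}$ and manufacture an $a\in D$ with $\cmb{e}a\notin U$; since $\cmb{e}$ was arbitrary, Proposition~\ref{localiccrit} then yields non-localicity.

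First I would record two consequences of the hypotheses. Since ${\cal K}_2$ is second countable and $U$ is discrete in the subspace topology, $U$ is countable, say $U=\{f_i\mid i\in\omega\}$, and each $f_i$ is isolated by a finite initial segment: there is $s_i$ with $V_{f_i\restriction s_i}\cap U=\{f_i\}$. Moreover every element of $U$ is nonrecursive, so any recursive function automatically lies outside $U$, and any $a$ with $ba\in U$ (for recursive $b$) is itself nonrecursive; in particular $D$ contains no recursive functions, so I may \emph{not} simply feed $\cmb{e}$ a recursive oracle. The construction of $a$, however, need not be effective: I may use full knowledge of $\cmb{e}$, of $U$, and of the isolating strings $f_i\restriction s_i$.

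The construction of $a$ proceeds by a forcing / finite-extension argument. I would fix once and for all a recursive splitting of the argument positions into ``payload'' and ``padding'' positions, together with a recursive decoder $b$ of projection type, so that the values of $a$ at payload positions are copied into $ba$ while the padding positions are ignored by $b$; keeping the payload equal to a fixed $f\in U$ then guarantees $ba=f\in U$, i.e.\ $a\in D$, no matter how the padding is filled. The infinitely many free padding positions are then used to drive $\cmb{e}a$ out of $U$, meeting one at a time the requirements $R_i$: ``$\cmb{e}a\neq f_i$'' (a divergence of $\cmb{e}a$ at a single coordinate being even better, as it already gives $\cmb{e}a\notin U$). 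Discreteness enters precisely here: because $f_i$ is isolated by the finite string $f_i\restriction s_i$, the requirement $R_i$ is met as soon as the current condition forces $\cmb{e}a\restriction s_i\neq f_i\restriction s_i$, which is a finite commitment; thus each $R_i$ is handled by a finite extension and the isolated points of $U$, being nowhere dense, can be avoided.

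The hard part, and the only place where something genuine happens, is showing that each $R_i$ can actually be \emph{forced}, i.e.\ that $\cmb{e}a$ is never ``pinned'' inside $U$ by the already-committed payload. The danger is that $\cmb{e}$ mimics the decoder, reading only the payload and returning a member of $U$ (for $\cmb{e}$ the identity, say, $\cmb{e}a=a$). The model case is $U=\{f\}$, where the claim reduces to the classical recursion-theoretic fact that no single recursive functional computes a fixed nonrecursive $f$ uniformly from every oracle lying ${\cal K}_2$-above $f$; there one simply takes $a\in D$ with $a\neq f$, so that $\cmb{e}a=a\notin U$. In general, if $\cmb{e}a$ were pinned it would ignore the padding and compute a function recursive in the payload alone, so I retain enough freedom either to perturb the padding (when $\cmb{e}$ does read it) or to replace the embedded payload by another nonrecursive function still decoded by $b$ into $U$ but whose $\cmb{e}$-image escapes the isolating neighbourhood of every $f_i$. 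Since only finitely much of $a$ is committed at each stage, this room is always available, and the construction goes through, yielding $a\in D$ with $\cmb{e}a\notin U$. This contradicts the choice of $\cmb{e}$, so no such $\cmb{e}$ exists and, by Proposition~\ref{localiccrit}, the tripos $[-,{\cal P}(\Pi)^U_{{\cal K}_2,{\rm Rec}}]$ is non-localic.
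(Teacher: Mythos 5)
Your overall frame (contradiction via Proposition~\ref{localiccrit}, then building an oracle $a\in D$ by finite extensions) is compatible with the paper's proof, but the decisive step is missing, and you have located it yourself: the ``pinned'' case. Your strategy fixes one recursive decoder $b$ and works only with oracles of payload--padding form, i.e.\ inside $D_0=\{a\mid ba\in U\}\subseteq D$. If the adversary's $\cmb{e}$ coincides with $b$ on $D_0$ (or, more generally, if on every completion of the current condition that lies in $D_0$ the value $\cmb{e}a$ is forced into the isolating neighbourhood of some fixed $f_i$, hence equals $f_i$), then no perturbation of the padding and no replacement of the payload by another element of $U$ can make $\cmb{e}a$ leave $U$: the image is $f_i$ no matter what. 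Your resolution of this case --- ``I retain enough freedom \dots\ this room is always available, and the construction goes through'' --- is an assertion, not an argument, and within your framework it is false. The theorem survives because a pinned $\cmb{e}$ is impossible for a different reason, and that reason is a \emph{computability} contradiction, which appears nowhere in your proposal.

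The missing idea, which is the entire content of the paper's proof, is this. Suppose $\cmb{e}a=\tau\in U$ for all $a$ in (a neighbourhood of some $\pi\in U$ intersected with) $D$. Discreteness gives an $N$ such that $\tau$ is the only element of $U$ agreeing with $\tau$ up to $N$; choose $N'$ so that the first $N'+1$ values of $\pi$ already force $\cmb{e}$ to output $\tau(0),\dots,\tau(N)$. Any finite extension of $(\pi(0),\dots,\pi(N'))$ can be completed to a point of $D$ by appending $\tau$ as a tail (a recursive shift recovers $\tau$, so the completion is in $D$); on that completion $\cmb{e}$ outputs an element of $U$ agreeing with $\tau$ up to $N$, hence outputs exactly $\tau$. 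Therefore \emph{every} finite extension on which $\cmb{e}$ converges at $j$ yields $\tau(j)$, and convergence is guaranteed along $\pi$ itself; an unbounded search thus computes $\tau$, contradicting that $U$ consists of nonrecursive functions. Your proposal uses discreteness only to reduce each requirement $R_i$ to a finite commitment, whereas the proof needs it to force this rigidity and then to turn rigidity into computability of an element of $U$. Without that step (or an equivalent one), the argument does not close.
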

\begin{proof} Suppose, for a contradiction, that $\alpha$ is some recursive function with the property that for every recursive $\beta$ and arbitrary $\gamma$, if $\beta\gamma\in U$ then $\alpha\gamma\in U$. First we note that for $\tau\in U$ and $\cmb{skk}\in {\cal K}_2$, which is recursive, $(\cmb{skk})\tau =\tau\in U$, so $\alpha\tau\in U$. Therefore we can fix some $\pi\in {\cal K}_2$ and some $\tau\in U$ such that $\alpha\pi =\tau$.

Since $U$ is discrete, there is some number $N$ such that the basic neighbourhood
$$U_{(\tau (0),\ldots ,\tau (N))}$$
contains no element of $U$ except $\tau$. Let $N'$ be a natural number big enough so that for every $i$, $0\leq i\leq N$, there is some $k<N'$ such that $\alpha ([i,\pi (0),\ldots ,\pi (k-1)])=\tau (i)+1$.
\medskip

\noindent {\sl Claim}. Let $\pi '\in U_{(\pi (0),\ldots ,\pi (N'))}$. Then for every $j\in\mathbb{N}$, either $\alpha\pi '(j)=\tau (j)$ or there is no $k$ such that $\alpha\pi '(j)=k$.

\noindent {\sl Proof of Claim}: suppose $\pi '$ as in the Claim, and $j_0$ such that for some $k\neq\tau (j_0)$ we have $\alpha\pi '(j_0)=k$. Let $t$ be least such that
$$\alpha ([j_0,\pi '(0),\ldots ,\pi '(t-1)])=k+1$$
and let $M={\rm max}(N',t)$. Define $\pi ''\in {\cal K}_2$ as follows:
$$\pi ''(i)\; =\;\left\{\begin{array}{rl} \pi '(i) & \text{if } i\leq M \\ \tau (i-(M+1)) & \text{otherwise}\end{array}\right.$$
Clearly, there is some recursive function $\beta$ such that $\beta\pi ''=\tau\in U$; hence, $\alpha\pi ''\in U$, but by construction we must have $\alpha\pi ''=\tau$, but this contradicts the fact that $\alpha\pi ''(j_0)\neq \tau (j_0)$. This proves the claim.
\medskip

\noindent But now, with $\alpha$ recursive and the finite sequence $(\pi (0),\ldots ,\pi (N'))$ given, we have a recipy to compute $\tau$: for any input $j$, either there is some $k\leq N'$ such that $\alpha ([j,\pi (0),\ldots ,\pi (k)]>0$ (and then for the least such $k$, this must be $\tau (j)+1$); or there is some sequence $(n_0,\ldots ,n_m)$ which is minimal such that
$$\alpha ([j,\pi (0),\ldots ,\pi (N'),n_0,\ldots ,n_m])>0$$
and then, by the claim, the result must be $\tau (j)+1$. This algorithm contradicts the assumption that $\tau\in U$, and hence non-computable.\end{proof}

\begin{small}
\bibliographystyle{plain}

\end{small}
\end{document}